\numberwithin{equation}{section}
\newcounter{proofitem}[section]
\newtheorem{thm}[subsubsection]{Theorem}
\newtheorem{thmm}[subsection]{Theorem}
\newtheorem{cor}[subsubsection]{Corollary}
\newtheorem{lem}[subsubsection]{Lemma}
\newtheorem{lemdef}[subsubsection]{Lemma-Definition}
\newtheorem{prop}[subsubsection]{Proposition}
\newtheorem{case}[proofitem]{Case}
\newtheorem{lemm}[subsection]{Lemma}
\theoremstyle{definition}
\newtheorem{defn}[subsubsection]{Definition}
\newtheorem{defnn}[subsection]{Definition}
\newtheorem{rem}[subsubsection]{Remark}
\newtheorem{remm}[subsection]{Remark}
\newcommand{\Ko}{K^\circ}
\newcommand{\Koo}{K^{\circ\circ}}
\newcommand{\Fo}{F^\circ}
\newcommand{\Foo}{F^{\circ\circ}}
\newcommand{\Lo}{L^\circ}
\newcommand{\Loo}{L^{\circ\circ}}
\newcommand{\Kalg}{K_{alg}}
\newcommand{\Kalgo}{K_{alg}^\circ}
\newcommand{\Kalgoo}{K_{alg}^{\circ\circ}}
\newcommand{\Falg}{F_{alg}}
\newcommand{\cE}{\mathcal E}
\newcommand{\ox}{\overline x}
\renewcommand{\epsilon}{\varepsilon}
\renewcommand{\phi}{\varphi}
\renewcommand{\emptyset}{\varnothing}
\def\Sum{{\Sigma}}
\def\Hens{{\rm Hen}}
\let\cal\mathcal
\def\11{{\mathbf 1}}
\def\NN{{\mathbf N}}
\def\QQ{{\mathbf Q}}
\def\ZZ{{\mathbf Z}}
\def\cA{{\mathcal A}}
\def\cB{{\mathcal B}}
\def\cC{{\mathcal C}}
\def\cD{{\mathcal D}}
\def\cE{{\mathcal E}}
\def\cF{{\mathcal F}}
\def\cL{{\mathcal L}}
\def\cO{{\mathcal O}}
\def\cP{{\mathcal P}}
\def\cS{{\mathcal S}}
\def\cT{{\mathcal T}}
\def\cU{{\mathcal U}}
\def\cW{{\mathcal W}}
 \def\cU{{\cal U}}
 \def\cA{{\cal A}}
 \def\cB{{\cal B}}
 \def\bF{\mathbb F}
 \def\bQ{\mathbb Q}
 \def\bZ{\mathbb Z}
 \def\bR{\mathbb R}
 \def\bC{\mathbb C}
 \def\bN{\mathbb N}
\def\r{\rho}
\def\br{\overline\rho}
\def\1{\xi_1}
\def\3{\overline \xi_1}
\def\2{\xi_2}
\def\4{\overline \xi_2}
\def\Hens{{\rm Hen}}
\begin{document}

\author[Cluckers]{Raf Cluckers}
\address{Universit\'e Lille 1, Laboratoire Painlev\'e, CNRS - UMR 8524, Cit\'e Scientifique, 59655
Villeneuve d'Ascq Cedex, France, and,
KU Leuven, Department of Mathematics,
Celestijnenlaan 200B, B-3001 Leu\-ven, Bel\-gium}
\email{Raf.Cluckers@math.univ-lille1.fr}
\urladdr{http://math.univ-lille1.fr/$\sim$cluckers}

\author[Lipshitz]{Leonard Lipshitz}
\address{Department of Mathematics, Purdue University, 150 North University Street, West Lafayette IN 47907-2067, USA}
\email{lipshitz@math.purdue.edu}
\urladdr{www.math.purdue.edu/$\thicksim$lipshitz/}

\begin{abstract}
We give conclusive answers to some questions about definability in analytic languages that arose shortly after the work by Denef and van den Dries, \cite{DD}, on $p$-adic subanalytic sets,
and we continue the study of non-archimedean fields with analytic structure of \cite{LR3}, \cite{CLR1} and \cite{CL1}.
We show that the language ${\mathcal L}_K$ consisting of the language of valued fields together with all strictly convergent power series over a complete, rank one valued field $K$ can be expanded, in a definitial way, to a larger language corresponding to an analytic structure (with separated power series) from \cite{CL1}, hence inheriting all properties from loc.~cit., including geometric properties for the definable sets like certain forms of quantifier elimination.
Our expansion comes from adding specific, existentially definable functions, which are solutions of certain henselian systems of equations.
Moreover, we show that, even when $K$ is algebraically closed, one does {\it not} have quantifier elimination in $\cL_K$ itself, and hence, passing to expansions is unavoidable in general.
We pursue this study in the wider generality of extending non-separated power series rings to separated ones, and give new examples, in particular of the analytic structure over ${\mathbf Z}[[t]]$ that can be interpreted and understood now in all complete valued fields. In a separate direction, we show in rather large generality that Weierstrass preparation implies Weierstrass division.
\end{abstract}

\subjclass[2000]{Primary 32P05, 32B05, 32B20, 03C10, 28B10, 03C64,
14P15; Secondary 32C30, 03C98, 03C60, 28E99}

\keywords{Henselian valued fields, Tate algebras, strictly convergent power series,
subanalytic sets, quantifier elimination,
analytic structure, separated power series, non-archimedean geometry, Weierstrass preparation and division, Artin approximation, Weierstrass systems}

\title{Strictly convergent analytic structures}

\maketitle

\section{Introduction}

We study a new notion of strictly convergent analytic structure (consisting of strictly convergent power series), linked to the notion of separated analytic structure of \cite{CL1} (consisting of power series with two kinds of variables, cf.~Remark \ref{rem:sep}). We use this study to answer some natural questions that arose shortly after the work by Denef and van den Dries on $p$-adic subanalytic sets \cite{DD}.
In a separate direction, we show in a rather large generality that Weierstrass preparation implies Weierstrass division, which is the converse of the usual implication.

To explain these questions and their context, we first fix some notation.
Let $K$ be a complete, rank one valued field (not necessarily algebraically closed).
Let $\Ko$ be the valuation ring of $K$ with maximal ideal  $\Koo$. For $m\geq 0$, put  $A_m := \Ko\langle\xi\rangle = T_m(K)^\circ$, the ring of strictly convergent power series over $\Ko$ in the variables $\xi=(\xi_1,\ldots,\xi_m)$, where a power series $\sum_{i\in\NN^m}a_i \xi^i$ is called strictly convergent if its coefficients $a_i$ go to zero as $|i|$ goes to $\infty$, where we write $\xi^i$ for $\prod_{j=1}^m \xi_j^{i_j}$.
In any complete, rank one  valued field $F \supset K$, any power series $f=\sum_i a_i \xi^i\in A_m$ gives rise to a function
from $(\Fo)^m$ to $F$ by evaluating the series.
More generally, for $L$ any valued field (not necessarily complete, nor necessarily of rank one), a system $\sigma=(\sigma_m)_{m\geq 0}$ of ring homomorphisms
$$
\sigma_m : A_m \to \cF((L^\circ)^m, L^\circ),
$$
with $\cF((L^\circ)^m, L^\circ)$ the ring of $L^\circ$-valued functions on $(L^\circ)^m$, satisfying
 \begin{itemize}
 \item[(1)]\label{1)} $\sigma_0(\Koo) \subset \Loo$,
 \item[(2)]\label{2)} $\sigma_{m}(\xi_i)=$ the $i$-th coordinate function
on $(\Ko)^m$ for  $i=1,\dots,m$,  and
 \item[(3)]\label{3)} $\sigma_{m+1}$ extends $\sigma_{m}$  with the natural inclusions $A_m\hookrightarrow A_{m+1}$ and $(L^\circ)^m\hookrightarrow  (L^\circ)^{m+1}:\xi\mapsto (\xi,0)$ inducing $\cF((L^\circ)^m, L^\circ) \hookrightarrow \cF((L^\circ)^{m+1}, L^\circ)$.
\end{itemize}
is called an analytic $\{A_m\}_m$--structure on $L$.

Let $\cL_K$ be the
valued field language (including the field inverse extended by zero on zero and the division symbol $|$ satisfied by a pair $(x,y)$ if and only if $y/x$ lies in the valuation ring), together with a function symbol for each series in $\bigcup_mA_m$.
For any valued field $L$ that is equipped with an analytic $\{A_m\}_{m \in \bN}$--structure $(\sigma_m)_m$, the field $L$ can be turned into an $\cL_K$-model with the valued field structure for the valued field language and, for each function symbol $f$ corresponding to a series $f_0$ in $A_m$, the interpretation of $f$ is given by
$\sigma_m(f_0)$ on $(\Lo)^m$ extended by zero to a function on $L^m$.
An important part of this paper is the study of $\cL_K$-definable sets in Henselian valued fields with analytic $ \{A_m\}_{m \in \bN}$--structure,
although this is in fact just one of the examples that we give for our more general concept of strictly convergent analytic structure, introduced in Section \ref{formal}.  We have chosen in much of this paper to put the full field inverse in the language, instead of restricted division, as in some previous papers.  The two approaches are equivalent, see Remark \ref{AQE}.

In several contexts
(for example on $p$-adic fields), the definable sets are already well understood, but the general case has
remained rather mysterious for a long time.
In \cite{DD}, Denef and van den Dries obtained a natural quantifier elimination result for $\cL_{\QQ_p}$-definable subsets of $\QQ_p^n$, based on the semi-algebraic quantifier elimination result for semi-algebraic sets by A. Macintyre. Precisely, joining the language of Macintyre with the language $\cL_{\QQ_p}$ one obtains quantifier elimination for $\QQ_p$.
The existence of quantifier elimination in the language of valued fields for algebraically closed valued fields (of any characteristic) led to the question of whether a
similar transition could be made from the semi-algebraic sets to the subanalytic ones, that is,
for $\cL_K$-definable subsets of $K^n$ when the complete, rank one valued field $K$ is algebraically closed.
The problem is subtle.  An elaborate attempt to prove quantifier elimination in the language $\cL_K$ via a ``flattening theorem" ultimately failed --  see \cite{LR5}
for an account of this history and for a counterexample to that strategy.
In this paper we show, among other things, that such a direct analogue to the $p$-adic case is false: a complete, rank one, algebraically closed valued field $K$ does \emph{not} have quantifier elimination in the language $\cL_K$, see Section \ref{Cex}.

Recall that, by \cite{LR2}, there is a definitial expansion
$\cL_{K}^{\mbox{\tiny\cite{LR2}}}$
of $\cL_K$ such that,
$\Kalg$ has quantifier elimination in $\cL_K^{\mbox{\tiny\cite{LR2}}}$. The expansion $\cL_K^{\mbox{\tiny\cite{LR2}}}$ is obtained from $\cL_K$ by adding function symbols for certain functions, $\cL_K$-existentially definable  on $\Kalg$
that give the ``Weierstrass data" for regular powerseries.
In this paper we extend this positive result of \cite{LR2} in several directions.

Generally, for not necessarily algebraically closed $K$ and a Henselian valued field $L$ with analytic $\{A_m\}_m$--structure as above, we can still control the geometry of the $\cL_K$-definable sets,
in the sense that we give a definitional expansion $\cL_K'$ of $\cL_K$ that corresponds to a separated analytic structure as defined in \cite{CL1}.  In particular, if $L$ is assumed to have characteristic zero (and arbitrary residual characteristic), then in a Denef-Pas style expansion $\cL_K''$ of $\cL'_K$ (or just an expansion with $RV_N$-sorts, see below), as in \cite{CL1}, one still gets elimination of valued field quantifiers in the language $\cL_K''$.
The expansion $\cL_K'$ is obtained from $\cL_K$ by adding existentially definable functions, which are solutions of certain henselian systems of equations.  The proof that Weierstrass Preparation and Division hold is reminiscent of the proof of Artin Approximation.

The simpler nature
 of the $p$-adic situation is not unique. Indeed, the case that $K$ and $L$ are discretely valued, complete and of characteristic zero, and when $\sigma_0$ maps a uniformizer of $K^\circ$ to a uniformizer of $\Lo$
 is already well understood and is treated by
Theorem 4.2 of \cite{CLR1} and Example 4.4(1) of \cite{CL1}.
The general study of $\cL_K$-definable sets is more subtle and is captured by Theorem \ref{ex1}.

Another direction in which we extend the results from \cite{LR2} is by starting from rings other than the $A_m$ above, for example, instead of $A_m$ we can take the ring $B_m$ of strictly convergent power series over $\ZZ[[t]]$ in the variables $\xi=(\xi_1,\ldots,\xi_m)$, where a power series is called strictly convergent if its coefficients $t$-adically go to zero; an analytic $\{B_m\}$--structure $(\sigma_m)_m$ on a valued field $L$ is axiomatized as above, where (\ref{1)}) now reads as $\sigma_0(t\ZZ[[t]]) \subset \Loo$.
Also such analytic $\{B_m\}$--structures are related to the ones of \cite{CL1} by definitial expansions, see Section \ref{ex2a}.

More generally, we define a concept of \emph{strictly convergent analytic structure} in Section \ref{formal}, which is a general concept for systems of (non-separated) power series that can be extended, by adding certain power series, to a separated analytic Weierstrass system as in \cite{CL1}, and the above systems $\{A_m\}$ and $\{B_m\}$ turn out to be examples by Theorems \ref{ex1} and \ref{ex3}. The (separated) power series that one  adds to the non-separated ones are solutions to certain systems of henselian analytic equations, and correspond naturally to functions whose graph is existentially definable, see Section \ref{sec:hens}.

\begin{rem}[A note about separeted versus non-separated power series]\label{rem:sep}
In the present context of rings of power series and the analytic interpretations they give on Henselian valued fields, often there are one or two different kinds of variables, which play different roles. When two different kinds of variables occur, we speak of separated power series, usually denoting one kind of variables by $\xi_i$ and the other kind by $\rho_j$ with indices $i,j$. When only one kind of variables occurs, one may speak of non-separated power series, or, of strictly convergent power series. For a separated power series $f$ in $\xi$ and $\rho$, in an interpretation in a henselian valued field $L$, the variables $\xi$ will run over $L^\circ$, while the variables $\rho$ will run over $L^{\circ\circ}$ (namely, the $\xi_i$ over the valuation ring and the $\rho_j$ over the maximal ideal).
In this paper, the starting point will be power series in one kind of variables, $\xi$-variables, and, gradually, separated power series will appear, in $\xi$ and $\rho$. Indeed, the solutions of certain systems of equations may have power series solutions whose natural domain of convergence will be products of the valuation rings with the maximal ideals of the valued fields under consideration, see Section \ref{subs:sep}. Finally note that `strictly convergent' in this paper often means non-separated, and does not designate convergence in the classical sense.
\end{rem}

Previous attempts to understand $\cL_K$-definable sets include a quantifier elimination for algebraically closed valued fields in a (much) smaller language including only overconvergent power series by Schoutens (\cite{S1}, \cite{S2}, \cite{S3}),  with a correction by F. Martin in \cite{Martin1},\cite{Martin2} and in a much larger language by the second author. This larger language of \cite{LL1} consisted of rings of separated power series. The study of definable sets was continued in e.g. \cite{LL2}, \cite{LR1}, \cite{LR2}, \cite{Ce1} -- \cite{Ce5},
\cite{BMScan}, \cite{Ri}.
Dimension theory for sets definable in the larger (separated) language was developed in \cite{LR6} and a more complete dimension theory has been developed in Martin's thesis \cite{Martin1}.
A desire for uniformity and for analytic Ax-Kochen principles as in \cite{vdD} (to change the residue field characteristic), led to the notion of a henselian field with analytic $\cA$--structure, developed in \cite{vdD}, \cite{DHM}, \cite{LR3}, \cite{CLR1}, \cite{CL1}. In \cite{CL1} we gave a general theory of henselian valued fields with separated analytic structure, including a cell decomposition for definable sets and elimination of valued field quantifiers for such fields of characteristic zero.  Those results in particular apply to the separated analytic structures discussed in this paper. The issue of when a strictly convergent analytic structure on a henselian field could be extended to a separated analytic structure in a canonical way by adjoining only existentially definable functions was left largely unaddressed in \cite{CL1}, see Remark \ref{pWrem}.  We deal with this question in Section 3.

We begin our paper by showing that Weierstrass preparation implies Weierstrass division in a rather large generality. This is the converse of the generally known implication.

\section{Weierstrass Preparation implies Weierstrass Division}\label{WPWD}

In many contexts, it is well-known that the Weierstrass Division Theorem implies the Weierstrass Preparation Theorem. In this section, we show that the converse implication holds too, under very mild conditions.
Perhaps this observation has been made elsewhere, but we have not seen it before.

\subsection{The strictly Convergent case}
Let $A$ be a commutative ring and let $I \subset A$ be an ideal with $I\not=A$.

We consider a family $\{A_m\}$ of rings for $m\geq 0$ with $A_0 = A$ and for $1 \leq m \leq m'$
$$
A[\xi_1, \cdots , \xi_m] \subset A_m \subset  A_{m'} \subset A[[\xi_1, \cdots , \xi_{m'}]].
$$
For $f  = \sum a_\mu \xi^\mu \in A_m$, let $\widetilde f := \sum \widetilde {a}_\mu \xi^\mu \in \widetilde A_m$, where \ $\widetilde {} \ :A \to A/I $ is the residue map, and $ \widetilde A_m := \{\widetilde f : f \in A_m\} \subset \widetilde A[[\xi]]$. We assume the rings $A_m$ are closed under permutation of the variables and satisfy

\begin{itemize}
 \item[($*$)] For all $m$ and $m'$ with $0\leq m\leq m'$ and with $\xi = (\xi_1, \cdots , \xi_m)$ and $\xi'' = (\xi_{m+1}, \cdots , \xi_{m'})$, and for all $f$ in $A_{m'}$, say, $f=\sum_{\mu\in\NN^m }\overline{f}_{\mu}(\xi)(\xi'')^\mu$, the
$\overline{f}_{\mu}$ are in $A_{m}$.
\item[($**$)] $\widetilde A_m = \widetilde A[\xi].$
\end{itemize}

The notion of ``regularity" in a given degree $d\geq 0$ is the following.

\begin{defn}[Strictly Convergent Regular]\label{SCR}
Let $d\geq 0$ and $m\geq 0$ be integers.
A power series $f \in A_m$, say $f=\sum_{i\in\NN^m} a_i \xi^i$ is called
{\em regular in $\xi_m$ of degree $d$} when the power series $\widetilde f \in \widetilde A[\xi]$
 is a monic polynomial in $\xi_m$ of degree $d$ in $\widetilde A_{m-1}[\xi_m]$.
\end{defn}

\begin{prop}\label{2.2} If the family of rings $\{A_m\}$ satisfies the Weierstrass Preparation Theorem, then the family also satisfies the corresponding Weierstrass Division Theorem.
More precisely, suppose that:
\begin{itemize}
\item[(WP)]
For all $d\geq 0$, all $m\geq 0$, all $f \in A_m$  regular in $\xi_m$ of degree $d$ (Definition \ref{SCR}),
there exist uniquely determined unit $u\in A_{m}$ and $r_i\in A_{m-1}$ such that $f=u \cdot (\xi_m^d + \sum_{i=0}^{d-1} r_i \xi_m^i)$.
\end{itemize}
Then it follows that:
\begin{itemize}
\item[(WD)]
For all $d\geq 0$, all $m\geq 0$, all $f \in A_m$  regular in $\xi_m$ of degree $d$, and all $g \in A_m$,
there exist uniquely determined  $q \in A_m$ and $r_1, \cdots , r_{d-1} \in A_{m-1}$ such that
$$g = q\cdot f + r_{d-1} \xi_m^{d-1} + \cdots + r_0.$$
\end{itemize}
\end{prop}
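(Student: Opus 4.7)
The plan is to derive (WD) from (WP) by introducing an auxiliary variable $\eta$ and applying (WP) to a one-parameter deformation of the Weierstrass polynomial. First I would apply (WP) to $f$ itself to write $f = u \cdot P$ with $u \in A_m^\times$ and $P = \xi_m^d + \sum_{i=0}^{d-1} r_i \xi_m^i$ for $r_i \in A_{m-1}$, so that division by $f$ is the same as division by $P$. Next I would reduce to the case $\tilde g = 0$: since $\tilde g$ lies in $\tilde A[\xi]$ by ($**$) and $\tilde P$ is monic of degree $d$ in $\xi_m$, ordinary polynomial division in $\tilde A_{m-1}[\xi_m]$ followed by coefficient-by-coefficient lifting from $\tilde A$ back to $A$ produces $q_0 \in A_m$ and $r_0 \in A_{m-1}[\xi_m]_{<d}$ with $\widetilde{g - q_0 P - r_0} = 0$.

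The core trick is then to put $\eta := \xi_{m+1}$ and consider $F(\xi,\eta) := P(\xi) + \eta\, g_1(\xi) \in A_{m+1}$, where $g_1 := g - q_0 P - r_0$. Because $\tilde g_1 = 0$, we have $\tilde F = \tilde P$, so $F$ is regular in $\xi_m$ of degree $d$. Applying (WP) in $A_{m+1}$ gives a factorization $F = U \cdot (\xi_m^d + \sum_{i=0}^{d-1} R_i \xi_m^i)$ with $U \in A_{m+1}^\times$ and the $R_i$ lying in the ring of power series in the variables $\xi_1, \ldots, \xi_{m-1}, \eta$ (using permutation closure). Via ($*$) I expand $U = \sum_{j \geq 0} U_j \eta^j$ with $U_j \in A_m$ and $R_i = \sum_{j \geq 0} R_{i,j} \eta^j$ with $R_{i,j} \in A_{m-1}$.

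Reading off the coefficient of $\eta^0$ on both sides yields $U_0 \cdot (\xi_m^d + \sum_i R_{i,0} \xi_m^i) = P$, which is a WP of $P$ itself; the uniqueness clause in (WP) then forces $U_0 = 1$ and $R_{i,0} = r_i$. The coefficient of $\eta^1$ becomes $g_1 = U_1 \cdot P + \sum_{i=0}^{d-1} R_{i,1} \xi_m^i$, which is exactly a division of $g_1$ by $P$ with quotient $U_1 \in A_m$ and remainder in $A_{m-1}[\xi_m]_{<d}$. Combining with the earlier reduction and substituting $P = u^{-1} f$ produces the desired identity $g = q\cdot f + r$.

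The main obstacle I expect is the uniqueness claim. Given two such decompositions, subtracting and reducing modulo $I$, the uniqueness of polynomial division by the monic $\tilde f$ in $\tilde A_{m-1}[\xi_m]$ forces both the difference of quotients and the difference of remainders to vanish modulo $I$. Promoting these congruences to actual equalities requires one more application of (WP) together with its uniqueness: apply (WP) to the regular element $f + \eta(r_2 - r_1) \in A_{m+1}$ and compare the resulting Weierstrass polynomial with the one arising from the factorization $f \cdot (1 - \eta(q_1 - q_2))$. The subtle point is the invertibility of $1 - \eta(q_1 - q_2)$ in $A_{m+1}$, which in the natural examples is guaranteed precisely because the coefficients of $q_1 - q_2$ lie in $I$.
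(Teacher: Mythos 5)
Your existence argument is correct and takes essentially the same route as the paper: after preparing $f=uP$ by (WP), you deform the Weierstrass polynomial $P$ by a new variable $\eta=\xi_{m+1}$ and read off the $\eta^1$-coefficient of the Weierstrass data of $P+\eta g_1$. The paper's reduction step is a Euclidean division of $g$ by $f$ bringing $\widetilde{g'}$ to degree $<d$ in $\xi_m$, whereas you normalize further to $\widetilde{g_1}=0$ by lifting the mod-$I$ polynomial division and using~($**$); both variants work and lead to the same application of (WP) in $A_{m+1}$.

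Your uniqueness argument, however, is circular as written. Set $q:=q_1-q_2$, $r:=r_2-r_1$, so $qf=r$ and (from the residue computation) $\widetilde q=\widetilde r=0$. Applying (WP) to $f+\eta r=f(1+\eta q)=[u(1+\eta q)]\cdot P$ and invoking uniqueness of (WP) yields exactly $U'=u(1+\eta q)$ and $\mathcal P'=P$ — i.e.\ it reproduces the identity $qf=r$ you started from and gives no new information; the equality $\mathcal P'=P$ does not force $r=0$. The repair is to deform $P$, not $f$: consider $P+\eta r$. This is already a monic polynomial of degree $d$ in $\xi_m$ with coefficients in $A_{m-1}[\eta]\subset A_m$, so it is its own Weierstrass polynomial and its (WP) factorization is $1\cdot(P+\eta r)$. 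On the other hand $P+\eta r=P+\eta quP=(1+\eta qu)\cdot P$, and $1+\eta qu$ is a unit since $\widetilde{1+\eta qu}=1$. Uniqueness in (WP) then forces $1+\eta qu=1$ and $P+\eta r=P$, whence $\eta qu=0$ and $\eta r=0$; since $\eta$ is a nonzerodivisor in $A[[\xi_1,\dots,\xi_{m+1}]]$ and $u$ is a unit, $q=r=0$.

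Finally, the "subtle point" you flag about the invertibility of $1-\eta(q_1-q_2)$ is not actually subtle and does not depend on the special examples: $\widetilde{1-\eta(q_1-q_2)}=1$ because $\widetilde{q_1}=\widetilde{q_2}$, so it is regular of degree $0$ in $\xi_m$, and (WP) at degree $d=0$ says any such element is a unit (this is Remark \ref{weakSNP}(b)). So that hypothesis is fully supplied by (WP) itself; you do not need (viiWNP) or $I$-adic arguments here.
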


\begin{proof} Below.
\end{proof}

Note that since the rings $A_m$ are closed under permutation of the variables, from Weierstrass Preparation for $f$ regular in $\xi_m$, Weierstrass Preparation for $f$ regular in other $\xi_i$ follows.

\subsection{The Separated Case}  Let $B$ be a commutative ring and let $I \subset B$ be an ideal with $I\not=B$.
We consider a family $\{B_{m,n}\}$ of rings for $m,n\geq 0$ with $B_{0,0} = B$ and for $m \leq m'$ and $n \leq n'$
$$
B[\xi_1, \cdots , \xi_m, \rho_1, \cdots ,\rho_n] \subset B_{m,n} \subset B_{m',n'} \subset B[[\xi_1, \cdots , \xi_{m'},  \rho_1, \cdots ,\rho_{n'}]].
$$
We define the residue map \ $\widetilde{}$ \ modulo $I$ as above. In particular, $\widetilde{B} = B/I$.
We assume that the rings $B_{m,n}$ are closed under permutation of the $\xi_i$ and under permutation of the $\rho_j$, and satisfy the following very mild conditions.
\begin{itemize}
 \item[($*$)] For all $m$ and $m'$ with $0\leq m\leq m'$ and with $\xi = (\xi_1, \cdots , \xi_m)$ and $\xi'' = (\xi_{m+1}, \cdots , \xi_{m'})$, and all $n$ and $n'$ with $0\leq n\leq n'$ and with $\rho = (\rho_1, \cdots , \rho_n)$ and $\rho'' = (\rho_{n+1}, \cdots , \rho_{n'})$
and for all $f$ in $B_{m',n'}$, say, $f=\sum_{\mu,\nu} \in\NN^{m'-m+n'-n }\overline{f}_{\mu \nu}(\xi,\rho)(\xi'')^\mu(\rho'')^\nu$, the
$\overline{f}_{\mu,\nu}$ are in $B_{m,n}$.
\item[($**$)] $\widetilde B[\xi,\rho] \subset \widetilde B_{m,n} \subset \widetilde B[\xi][[\rho]].$
\end{itemize}

 \begin{defn}[Separated Regular]\label{SepR}
 \item[(i)]
$f$ is called {\em regular in $\xi_m$ of degree $d$} when  in $\widetilde B[\xi]$,
$
\widetilde f  \mod (\rho_1, \cdots ,\rho_{n}).
$
is a monic polynomial in $\xi_m$ of degree $d$

 \item[(ii)]
$f$ is called {\em regular in $\rho_n$ of degree $d$} when, in $\widetilde B[\xi][[\rho]]$,
$$
\widetilde f \equiv \rho_n^d \mod (\rho_1, \cdots ,\rho_{n-1}, \rho_n^{d+1}).
$$
\end{defn}

\begin{prop} \label{2.4} If the family of rings $\{B_{m,n}\}$ satisfies the Weierstrass Preparation Theorems in both kinds of variables, then the family also satisfies the corresponding Weierstrass Division Theorems in both kinds of variables.
More precisely,
suppose that:
\begin{itemize}
\item[(WP)]
For all nonnegative integers $m,n,d$, all $f \in B_{m,n}$ which are regular of degree $d$ in $\xi_m$, resp.~in $\rho_n$,
one has that
there exist uniquely determined unit $u\in B_{m,n}$ and $r_i\in B_{m-1,n}$ such that $f=u \cdot (\xi_m^d + \sum_{i=0}^{d-1} r_i \xi_m^i)$, resp.~
there exist uniquely determined unit $u\in B_{m,n}$ and $r_i\in B_{m,n-1}$ such that $f=u \cdot (\rho_n^d + \sum_{i=0}^{d-1} r_i \rho_n^i)$.
\end{itemize}
Then it follows that:
\begin{itemize}
\item[(WD)]
 For all nonnegative integers $m,n,d$, all $f \in B_{m,n}$ which are regular of degree $d$ in $\xi_m$, resp.~in $\rho_n$, and for all $g \in B_{m,n}$
there are unique $q \in B_{m,n}$ and $r_1, \cdots , r_{d-1} \in B_{m-1,n}$ such that
$$g = q\cdot f + r_{d-1} \xi_m^{d-1} + \cdots + r_1 \xi_m + r_0,$$
resp.~there are unique $q \in B_{m,n}$ and $r_1, \cdots , r_{d-1} \in B_{m,n-1}$ such that
$$g = q\cdot f + r_{d-1} \rho_n^{d-1} + \cdots  + r_{1} \rho_n + r_0,$$
\end{itemize}

\end{prop}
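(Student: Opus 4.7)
My plan is to mimic the proof of Proposition 2.2 uniformly in the $\xi_m$ and $\rho_n$ cases. Applying WP to $f$ and absorbing the resulting unit into $q$, I reduce at once to $f=P$, a Weierstrass polynomial of degree $d$ in the chosen variable. Introduce a new $\rho$-variable $Y := \rho_{n+1}$ and set $\Phi := P - Y g \in B_{m,n+1}$. In both cases $\Phi$ turns out to be regular of degree $d$ in the same variable as $P$: in the $\xi_m$ case this is clear because modding out all $\rho$'s (including $Y$) in $\widetilde\Phi$ kills $Yg$ and leaves $\widetilde P$ modulo $(\rho_1,\ldots,\rho_n)$, monic of degree $d$ in $\xi_m$. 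In the $\rho_n$ case, I use the permutation invariance of the family to swap $\rho_n$ with $Y$: viewed in the permuted ring, $\rho_n$ is the last $\rho$-variable of $B_{m,n+1}$ and the regularity ideal $(\rho_1,\ldots,\rho_{n-1},Y,\rho_n^{d+1})$ now contains $Y$, so the perturbation $Y\widetilde g$ again vanishes modulo this ideal.

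With $\Phi$ regular, WP provides a unique factorization $\Phi = u \cdot W$ with $u \in B_{m,n+1}^\times$ and $W = \xi_m^d + \sum_{i<d} R_i \xi_m^i$ (resp.\ $W = \rho_n^d + \sum_{i<d} R_i \rho_n^i$), where the $R_i$ lie in the subring of $B_{m,n+1}$ not involving the variable of regularization. Using condition $(*)$, expand $u = \sum_k u_k Y^k$ with $u_k \in B_{m,n}$ and $R_i = \sum_k R_{i,k} Y^k$ with $R_{i,k} \in B_{m-1,n}$ (resp.\ $B_{m,n-1}$). Matching $Y^0$-coefficients in $\Phi = uW$ and invoking WP uniqueness for the trivial factorization $P = 1 \cdot P$ forces $u_0 = 1$ and $R_{i,0} = r_i$, the coefficients of $P$. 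Matching $Y^1$-coefficients then yields $-g = u_1 P + \sum_{i<d} R_{i,1}\,\xi_m^i$ (resp.\ $\rho_n^i$), so the desired WD holds with $q := -u_1$ and $r_i := -R_{i,1}$.

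For uniqueness, suppose $g = qP + r = q'P + r'$; set $\delta := q - q'$ and $\eta := r' - r$, so that $\delta P + \eta = 0$. Then in $B_{m,n+1}$ one has $(1-Y\delta)P = P + Y\eta$, and by the same regularity analysis $P + Y\eta$ is itself regular of degree $d$ in the chosen variable. Both $(1-Y\delta)\cdot P$ and $1\cdot(P+Y\eta)$ are therefore unit-times-Weierstrass factorizations of the same element; WP uniqueness forces $1-Y\delta = 1$ and $P = P + Y\eta$, hence $Y\delta = 0 = Y\eta$. Since $Y = \rho_{n+1}$ is a non-zero-divisor in $B_{m,n+1} \subset B[\xi][[\rho,Y]]$, I conclude $\delta = 0 = \eta$. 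The one delicate point is the $\rho_n$-case, where the naive regularity ideal $(\rho_1,\ldots,\rho_{n-1},\rho_n^{d+1})$ does not contain $Y$; it is the permutation invariance of the family that places $Y$ into the regularity ideal and thereby salvages the perturbation trick in both existence and uniqueness.
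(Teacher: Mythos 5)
Your proof is correct and uses essentially the same mechanism as the paper: adjoin a fresh $\rho$-variable $Y$, perturb the Weierstrass polynomial by $Yg$, apply WP in $B_{m,n+1}$, and read off the quotient and remainder from the linear term in $Y$. The only differences are cosmetic (the sign of the perturbation) or are points of additional care you make explicit where the paper is terse: you spell out why the $\rho_n$-case goes through (by permuting $\rho_n$ and $Y$ so that $Y$ lands in the regularity ideal), and you supply a uniqueness argument (again via perturbation and WP uniqueness, plus $Y$ being a non-zero-divisor), whereas the paper's proof only establishes existence and dismisses the $\rho_n$-case as ``similar.''
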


Note that, since the rings $A_{m,n}$ are closed under permutation of the $\xi$ variables and the $\rho$ variables, from Weierstrass Preparation for $f$ regular in $\xi_m$ or $\rho_n$, Weierstrass Preparation for $f$ regular in the other $\xi_i$ or $\rho_j$ follows.

The polynomial case of $A$ a field (and $I = \{0\}$) is included in Definition \ref{SCR}.  Also included in Definition \ref{SCR}  and Proposition \ref{2.2} are  the rings  of strictly convergent and overconvergent power series.
Included in Definition \ref{SepR}  and Proposition \ref{2.4} are  the rings  of
 separated power series. Also included in this case (with $m=0$) are the classical cases of  formal power series over a field or complete valuation ring, algebraic power series over a field or a complete valuation ring, or germs of convergent power series over for example $\bR$ or $\bC$. (Note that the proof of Proposition \ref{2.2} in the case $m=0$ stays in that case -- i.e. Weierstrass Division in $B_{0,n}$ follows from Weierstrass Preparation in $B_{0,n+1}$).
In each of these examples it is well-known that Weierstrass Preparation is an easy consequence of Weierstrass Division, i.e. if the rings satisfy Weierstrass Division they also satisfy Weierstrass Preparation. The above propositions treat the converse implication.

\begin{proof}[Proof of Proposition \ref{2.4}]
Let $f(\xi,\rho) \in B_{m,n}$ be regular in $\xi_m$ of degree $s$.  By Weierstrass Preparation we may write
\begin{eqnarray*}
f (\xi,\rho) &=& U(\xi,\rho)\cdot [\xi_m^s + a_{s-1}(\xi',\rho)\xi_m^{s-1} + \cdots +a_0(\xi',\rho)]\\
&=&U(\xi,\rho)\cdot P(\xi,\rho),
\end{eqnarray*}
say, where $\xi' = (\xi_1, \cdots , \xi_{m-1})$ and the $a_i \in B_{m-1,n}$.  Let $g \in B_{m,n}$.  Then
$$
P(\xi,\rho) + \rho_{n+1}g(\xi,\rho)
$$
$\in B_{m,n+1}$ is also regular in $\xi_m$ of degree $s$.  Hence, by Weierstrass Preparation (in $B_{m,n+1}$)  we have
$$
P(\xi,\rho) + \rho_{n+1}g(\xi,\rho) = \cU(\xi,\rho, \rho_{n+1}) \cdot \cP(\xi,\rho, \rho_{n+1})
$$
where $\cP(\xi,\rho,\rho_{n+1})$ is a (Weierstrass) polynomial of degree $s$ in $\xi_m$.  Let
$$
\cP(\xi,\rho,\rho_{n+1}) = R_0(\xi,\rho) + \rho_{n+1}R_1(\xi,\rho) + \rho_{n+1}^2R_2(\xi,\rho) \cdots .
$$
Then $R_0(\xi,\rho) = P(\xi,\rho)$, $\cU(\xi,\rho,0) = 1$ and each $R_i(\xi,\rho)$ is a polynomial in $\xi_m$ of degree $\leq s$.  We can rewrite this equation as
\begin{equation*}
\begin{split}
\cU(\xi,\rho, \rho_{n+1}) \cP(\xi,\rho,\rho_{n+1}) &=\\
P(\xi,\rho) + \rho_{n+1}\Big[\frac{\partial \cU}{\partial \rho_{n+1}}
&
\Big|_{\rho_{n+1} = 0} P(\xi,\rho) + R_1(\xi,\rho)\Big] + \rho_{n+1}^2\Big[\cdots\Big] + \cdots .
\end{split}
\end{equation*}
Hence, equating coefficients of $\rho_{n+1}$, we see that
$$
g(\xi,\rho) = Q(\xi,\rho)\cdot P(\xi,\rho) + R_1(\xi,\rho),
$$
where $Q(\xi,\rho) = \frac{\partial \cU}{\partial \rho_{n+1}}\big|_{\rho_{n+1} = 0}$.  Hence we have obtained the Weierstrass data for the division of $g$ by $P$ from that corresponding to Weierstrass Preparation for $P + \rho_{n+1} g$.  Then $g(\xi,\rho) = Q(\xi,\rho)U(\xi,\rho)^{-1}\cdot U(\xi,\rho) P(\xi,\rho) + R_1(\xi,\rho)$, and since $f = UP$, this gives the Weierstrass data for the division of $g$ by $f$.

The case that $f$ is regular in $\rho_n$ is similar.
\end{proof}

\begin{proof}[Proof of Proposition \ref{2.2}]In the case of Proposition \ref{2.2} that we only have one sort of variable we can proceed as follows.  In the special case that $A$ is the valuation ring of a non-trivially valued field instead of
$P(\xi,\rho) + \rho_{n+1}g(\xi,\rho)$  we consider
$P(\xi) + \epsilon \xi_{m+1}  g(\xi)$, for some $\epsilon \in A$ with $0 < |\epsilon| < 1$.  This series is regular in the appropriate sense in $\xi_m$, and we can proceed exactly as in the proof of Proposition \ref{2.4}.  In the general case we can do some {\it Euclidean} division of $g$ by $f$ to write $g = q' \cdot  f + g'$ for $q'$ a polynomial in $\xi_m$ and $g'$ satisfying that $\widetilde{g'}$ is a polynomial in $\xi$ of degree  $< s$ .  Then we need only find the Weierstrass data of the division of $g'$ by $f$, and we can do that as in the  proof  of Proposition \ref{2.4} considering $f + \xi_{m+1}g'$ which is regular in $\xi_m$ of degree $s$. \end{proof}

\section{Strictly convergent Weierstrass systems and their properties}

In this section we show how to extend a strictly convergent pre-Weierstrass system to a separated pre-Weierstrass system in a uniformly, existentially definable way, by adjoining certain ``henselian" powerseries.  Under mild additional conditions this extension leads to a separated Weierstrass system.  In particular, that is the case for the basic examples (Theorems \ref{ex1} and \ref{ex3}).  In subsection \ref{formal} we discuss the general case.  The results generalize and extend the partial results about strictly convergent Weierstrass systems given in \cite{CL1}  (see Remark \ref{pWrem}).

\subsection{Definitions}
By a valued field we mean a field together with a valuation map to a (possibly trivial) ordered abelian group (not necessary of rank one) satisfying the usual non-archimedean properties.
 We denote the valuation ring of a valued field $F$ by  $\Fo$, the maximal ideal of $\Fo$ by $\Foo$ and the residue field $\Fo/\Foo$ by $\widetilde F$. By a henselian field we mean a valued field for which hensel's lemma holds on $\Fo$. For a henselian field $F$ we let $\Falg$ denote the algebraic closure of $F$; the valuation on $F$ has a unique extension to a valuation on $\Falg$.  Unless otherwise stated we write $\xi = (\xi_1, \dots , \xi_m)$ when $m$ is clear from the context.

Let $A$ be a commutative ring with unit, and let $I$ be an ideal of $A$ with $I\not=A$. Let $\widetilde A := A/I$
and write  \ $\widetilde{}: A  \to \widetilde A$ for the projection map. We also write \ $\widetilde {}$ \ for the residue map
$A[[\xi]] \to \widetilde A[[\xi]]$ which sends $\sum_\mu a_\mu \xi ^\mu$ to $\sum_\mu \widetilde a_\mu \xi ^\mu$,
and $(A_{m})\;\widetilde{}$ \  for the image of $A_m$ under this map when $A_m \subset A[[\xi]]$.

  \begin{defn}[Strictly Convergent Regular]
\label{regulars}  A power series $f \in A[[\xi]]$ is called
{\em regular in $\xi_m$ of degree $d$} when $\widetilde f \in \widetilde A[[\xi]]$ is  a monic polynomial in $\xi_m$ of degree $d$.
\end{defn}

 \begin{defn}[System]\label{PWSs}
A collection $\cA=\{A_{m}\}_{m\in \bN}$ of $A$-algebras $A_{m}$,
satisfying for all $m\geq 0$ that
\begin{itemize}
\item[(i)]
$ A_{0} = A,$
\item[(ii)]
$A_{m}$ is a subalgebra of $A[[\xi_1,\ldots,\xi_m]]$ which is closed under permutation of the variables,
\item[(iii)]
$A_{m}[\xi_{m+1}]\subset A_{m+1}$,
\item[(iv)]
$(A_{m})\;\widetilde{}\;$ is the polynomial ring
$\widetilde{A}[\xi_1,\cdots,\xi_m]$,
\end{itemize}
is called a
system, or an $(A,I)$-system, if we want to emphasize the dependence on $A$ and $I$.
\end{defn}

\begin{defn}[Analytic structure]
\label{AS}
Let $\cA = \{A_{m}\}$ be a system
as in Definition \ref{PWSs}, and let $K$ be a valued field. A
\emph{strictly convergent analytic $\cA$-structure on $K$} is a
collection of homomorphisms $\sigma = \{\sigma_{m}\}_{m\in\bN}$, such that,
for each $m\geq 0$, $\sigma_{m}$ is a homomorphism from $A_{m}$ to
the ring of $K^\circ$-valued functions on $(K^\circ)^m$ satisfying:
 \begin{itemize}
 \item[(1)] $I\subset\sigma_{0}^{-1} (K^{\circ\circ})$,
 \item[(2)]$\sigma_{m}(\xi_i)=$ the $i$-th coordinate function
on $(\Ko)^m$, $i=1,\dots,m$,  and
 \item[(3)] $\sigma_{m+1}$ extends $\sigma_{m}$ where we
identify in the obvious way functions on $(\Ko)^m$ with functions on
$(\Ko)^{m+1}$ that do not depend on the last coordinate.
\end{itemize}
\end{defn}

 \begin{defn}[Pre-Weierstrass System]\label{PWS}
 A system $\cA=\{A_{m}\}_{m\in \bN}$ of $A$-algebras $A_{m}$,
satisfying, for all $m \leq m'$:
\begin{itemize}
 \item[(v)]If  $f \in A_{m'}$, say $f =
\sum_{\mu}\overline{f}_{\mu}(\xi)(\xi'')^\mu$, then the
$\overline{f}_{\mu}$  are in $A_{m}$, where $\xi'' = (\xi_{m+1}, \cdots , \xi_{m'})$ and $\xi = (\xi_1,\cdots,\xi_m)$,
 \item[(vi)] (Weierstrass preparation) If $f \in A_m$ is regular in $\xi_m$ of degree $d$, then there exist
uniquely determined unit $u\in A_{m}$ and $r_i\in A_{m-1}$ such that $f=u \cdot (\xi_m^d + \sum_{i=0}^{d-1} r_i \xi_m^i)$,
\item[(viiWNP)](Weak noetherian property) If $f = \sum_\mu a_\mu \xi^\mu \in A_m$ there is a finite set $J \subset \bN^m$, and for each $\mu \in J$ a power  series $g_\mu \in A_m$ with $\widetilde g_\mu = 0$ such that
     $$
     f = \sum_{\mu \in J} a_\mu \xi^\mu (1 + g_\mu).
     $$
\item[(viii)] The ideal
$
\{a \in A : \sigma_0(a) = 0 \text{ for all analytic structures } \sigma=(\sigma_m) \}
$
equals the zero ideal.
\end{itemize}
is called a \emph{strictly convergent
 \textbf{pre}-Weierstrass System}, or an $(A,I)$-strictly convergent
 pre-Weierstrass System, if we want to emphasize the dependence on $A$ and $I$.
\end{defn}
Let us fix, until subsection \ref{ex1a}, a strictly convergent pre-Weierstrass System $\{A_m\}_m$.

Later on, a stronger form of (viiWNP) will play a role.

\begin{rem}\label{weakSNP}
\item[a)]
By the results of Section \ref{WPWD}, from condition (vi) we also have that Weierstrass Division  holds in the $A_m:$ \\
{\it If $f \in A_m$ is regular in $\xi_m$ of degree $d$, and $g \in A_m$ there are unique $q \in A_m$ and $r_1, \cdots , r_{d-1} \in A_{m-1}$ such that
$
g = q\cdot f + r_{d-1} \xi_m^{d-1} + \cdots + r_0.
$
Furthermore, if $\widetilde g = 0$ then $\widetilde q = \widetilde r_0 = \cdots = \widetilde r_{d-1} =0$.}
\item[b)] It follows from Weierstrass Preparation that if $f \in A_m$ satisfies $\widetilde f = 1$ (i.e. $f$ is regular of degree $0$) then $f$ is a unit.
\item[c)] From (viiWNP), it follows for $f \in A_m$ with
$\widetilde f = 0$ that $f \in I \cdot A_m$.
The above conditions (i) -- (vi) do not impose much structure on ``small" power series (e.g. those in $I \cdot A_m$) as they do not provide a mechanism for dividing by a biggest coefficient and bringing such a power series to the ``top" level where conditions (iv) and (vi) apply.
Condition (viiWNP) (and even more so, Definition \ref{SNPstr} below)  impose  structure on ``small" power series in the $A_m$ and provide such a mechanism.
\end{rem}

\begin{lemdef}\label{gn}
Let $\cA$ be a pre-Weierstrass system and let $K$ be a henselian field with analytic $\cA$--structure $\sigma$. Let $m\geq 0$ be an integer. We associate with $f\in A_m$ the power series
 $$
\cS^\sigma(f) := \sum_\mu \sigma(a_\mu)\xi^\mu \mbox{ in } \Ko[[\xi]].
 $$
Then the map $\cS$ is an $A$-algebra homomorphism. Moreover, if $\cS^\sigma(f) = 0$ then
$f^\sigma = 0$. Hence, the homomorphism $\sigma_m$ factors through $A_m^\sigma$, the image of $\cS$.
On $A_m^\sigma$ we can define the gauss norm as follows, with $f=\sum_\mu a_\mu \xi^\mu$ and where $J$ is as in (viiWNP)
 $$
 ||\cS^\sigma(f)|| := \max \{|\sigma(a_\mu)| : \mu \in J \}.
 $$
\end{lemdef}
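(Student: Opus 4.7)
The plan is to handle the four assertions in turn, with condition (viiWNP) of Definition~\ref{PWS} (amplified by Remark~\ref{weakSNP}(c)) doing the heavy lifting for the last three. First, to see that $\cS^\sigma$ is an $A$-algebra homomorphism, I will observe that multiplication in $A_m \subset A[[\xi]]$ is the Cauchy product, so each coefficient of a product $fg$ is a finite $A$-linear expression in the coefficients of $f$ and $g$; since $\sigma_0$ is a ring homomorphism $A \to K^\circ$, it commutes coefficient-wise with this operation, forcing $\cS^\sigma(fg) = \cS^\sigma(f)\cS^\sigma(g)$ in $K^\circ[[\xi]]$, while additivity and $A$-linearity are immediate. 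Note that $\cS^\sigma$ sends the constant $a \in A$ to $\sigma_0(a)$, which is exactly the $A$-algebra structure map used on the target.

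For the factoring, suppose $\cS^\sigma(f) = 0$, so $\sigma_0(a_\mu) = 0$ for every $\mu$. I will invoke (viiWNP) to write $f = \sum_{\mu \in J} a_\mu \xi^\mu(1+g_\mu)$ with $J$ finite and $\widetilde{g_\mu} = 0$. Applying the ring homomorphism $\sigma_m$ yields a \emph{finite} sum in which every term already carries the factor $\sigma_0(a_\mu) = 0$, so $\sigma_m(f) = 0$. Thus $\ker \cS^\sigma \subset \ker \sigma_m$ and $\sigma_m$ descends to the image $A_m^\sigma$.

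The core of the argument is the well-definedness of the Gauss norm, which must be independent both of the lift $f$ of the series $\cS^\sigma(f)$ and of the (non-unique) choice of $J$ in (viiWNP). My plan is to prove that the prescribed maximum equals the intrinsic quantity
\[
\sup_\mu\, |\sigma_0(a_\mu)|,
\]
a quantity that manifestly depends only on the formal power series $\cS^\sigma(f) \in K^\circ[[\xi]]$. The inequality $\leq$ is trivial since $J \subset \bN^m$. For the reverse inequality, I expand $1+g_\nu = \sum_\kappa c_{\nu,\kappa}\xi^\kappa$ for each $\nu \in J$; by Remark~\ref{weakSNP}(c), $g_\nu \in I\cdot A_m$, so $c_{\nu,\kappa} \in I$ for $\kappa \neq 0$ and $c_{\nu,0} - 1 \in I$. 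Combined with condition (1) of Definition~\ref{AS}, this yields $|\sigma_0(c_{\nu,\kappa})| < 1$ for $\kappa \neq 0$ and $|\sigma_0(c_{\nu,0})| = 1$. Comparing $\xi^\mu$-coefficients in the identity $f = \sum_{\nu \in J} a_\nu \xi^\nu(1+g_\nu)$ expresses $a_\mu$ as a finite $A$-linear combination of the $a_\nu$ with $\nu \in J$ and $\nu \leq \mu$; applying $\sigma_0$ and the ultrametric inequality gives $|\sigma_0(a_\mu)| \leq \max_{\nu \in J}|\sigma_0(a_\nu)|$, as required.

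The main obstacle is this Gauss-norm step: the two earlier items are formal consequences of $\sigma_0$ being a ring homomorphism and $\sigma_m$ respecting the coordinate variables, but the Gauss-norm independence of $J$ is delicate and relies essentially on (viiWNP). Specifically, the argument hinges on Remark~\ref{weakSNP}(c) upgrading $\widetilde{g_\nu} = 0$ to $g_\nu \in IA_m$, together with condition (1) of Definition~\ref{AS} supplying the strict inequality $|\sigma_0(I)| < 1$ needed to close the ultrametric estimate; without either ingredient the candidate definition could depend on $J$ and fail to define a norm on $A_m^\sigma$.
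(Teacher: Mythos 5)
Your proof is correct and matches the paper's approach: the paper's proof of this Lemma-Definition consists solely of the observation that $\cS^\sigma(f)=0\Rightarrow f^\sigma=0$ "follows immediately from condition (viiWNP)" with "the rest is clear," and you have filled in exactly those details (the homomorphism property from $\sigma_0$ being a ring map, the factoring from the finite decomposition in (viiWNP), and well-definedness of the Gauss norm by showing it equals the intrinsic quantity $\sup_\mu|\sigma_0(a_\mu)|$). One small inaccuracy in your closing remark: the ultrametric estimate only uses $|\sigma_0(c_{\nu,\kappa})|\leq 1$, which holds automatically because $\sigma_0$ maps $A$ into $K^\circ$; the strict inequality $|\sigma_0(I)|<1$ from Definition~\ref{AS}(1) is not actually needed to close that step.
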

\begin{proof}
That $\cS^\sigma(f) = 0$ implies
$f^\sigma = 0$ follows immediately from condition (viiWNP). The rest is clear.
\end{proof}

\begin{rem}\label{comp}
\item[a)]
If the valuation on $K$ is non-trivial, then one also has the converse implication that $f^\sigma = 0$ implies that $\cS^\sigma(f) = 0$. This also follows from condition (viiWNP).
\item[b)] Weierstrass Division for the rings $A_m$ implies that composition of tuples of power series in the $A_m$ corresponds again to power series in $A_m$, as explained in Remark 4.5.2. of \cite{CL1}.
\end{rem}

\subsection{Extension by henselian functions} \label{sec:hens}

\subsection*{}\label{subs:sep}
We sometimes wish to consider some of the variables to be of the first kind
(namely to indicate that their natural domain is the valuation ring), and some of the second kind (namely to indicate that their intended domain is the maximal ideal),
 cf.~Remark \ref{rem:sep}.
We will use $\rho  = (\rho_1, \dots , \rho_n)$ to denote variables of the second kind, and keep $\xi=(\xi_1,\ldots,\xi_m)$ for variables of the first kind. Variables of the second kind typically arise in making a power series regular, or guaranteeing that the existence of a solution to an equation, or system of equations,  is implied by Hensel's lemma.
We will sometimes write $A_{m,n}$ for $A_{m+n}$ to indicate that the last $n$ variables will be considered to be of the second kind and run over the maximal ideal; this change in notation will be convenient to gradually introduce genuinely new rings of separated power series.
This is crucial in our use of the henselian functions $h_{\Sigma}$ defined below. These $h_{\Sigma}$ are solutions of henselian systems (also defined below), and correspond to unique power series over $A$, which may be new, that is, not lie in any of the $A_m$.
For example, the power series $1+y - \rho_1 \cdot y^2$ is regular of degree $1$ in $y$ (Definition \ref{SepR}(i) or Definition \ref{regular}(i), and  the equation $1+y - \rho_1 \cdot y^2 =0$ defines $y=y(\rho)$ as a henselian function of (indeed, as a henselian power series in) $\rho_1$ since $\rho_1$ is considered to run over the maximal ideal, but the equation $ 1 + y - \xi_1 \cdot y^2$ only defines $y$ as a henselian function of $\xi_1$ when $|\xi_1| < 1$, and does not define $y$ as a power series in $\xi_1$; indeed not even as a function of $\xi_1$ when $|\xi_1| = 1$.
We will later enrich the $A_m$ by adding power series like $y(\rho)$, and get a richer system, genuinly in the two kinds of variables.

Basic henselian functions are defined in Definition \ref{hf1}.  A
more general kind of henselian witnessing functions than the basic one  will be defined using  systems of equations of  the following form.

\begin{defn}[Systems of henselian equations]\label{hf2}
 Let $\eta = (\eta_1,\cdots,\eta_M), \lambda = (\lambda_1,\cdots,\lambda_N)$. A {\it system,
 $\Sigma$, of henselian equations for $(\eta,\lambda)$ over $(\xi,\rho)$} is a system of the form
\begin{equation}
\begin{split}
0=\eta_i - b_{0,i}(\xi,\rho) - b_{1,i}(\xi,\rho,\eta,\lambda) \quad i=1,\cdots,M\\
0 = \lambda_j - c_{0,j}(\xi,\rho) - c_{1,j}(\xi,\rho,\eta,\lambda) \quad j=1,\cdots,N
\end{split}
\end{equation}
where the
\begin{equation}\label{hss}
\begin{split}
\quad b_{0,i} &\in A[\xi,\rho], \qquad \quad b_{1,i} \in (I,\rho,\lambda)A_{m+M,n+N}, \qquad  \text{  and the }\\
\quad c_{0,j} &\in (\rho)A[\xi,\rho], \qquad c_{1,j} \in (I,\rho,\lambda^2)A_{m+M,n+N}.
\end{split}
\end{equation}
Here $\lambda^2$ is shorthand for $\{\lambda_i\lambda_j : i,j = 1, \cdots , N \}$. By the ideal $I_\Sigma$ of the system $\Sigma$ is meant the ideal of $A_{m+M,n+N}$ generated by the right hand sides of the equations in $\Sigma$.
When $\Sigma$ is a set of {\it polynomial} equations in $(\xi,\rho,\eta, \lambda)$ we call $\Sigma$ a {\it polynomial henselian system}.
We will often write a henselian system in the equivalent form:
\begin{equation}\label{hs'}
\begin{split}
\eta_i=  b_{0,i}(\xi,\rho) + b_{1,i}(\xi,\rho,\eta,\lambda) \quad i=1,\cdots,M\\
\lambda_j =   c_{0,j}(\xi,\rho) + c_{1,j}(\xi,\rho,\eta,\lambda) \quad j=1,\cdots,N.
\end{split}
\end{equation}
\end{defn}

\begin{defn}\label{sol}
Let $g$ be in $A_{m+n+s}$ for some integer $s$ and consider power series $h_i$ in $A_{m}[[\rho_1,\ldots,\rho_n]]$ for $i=1,\ldots,s$.
Then we call $h=(h_i)_{i=1}^s$ a power series solution of the equation $g = 0$
when for each integer $k> 0$ there exists a tuple $p_k$ of polynomials in the variables $\rho$ of degree $<k$ with coefficients in $A_m$ such that $h \equiv p_k \mod (\rho)^k$ and $g(\xi,\rho,p_k(\xi,\rho))$ vanishes modulo $(\rho)^k$, where the composition is as in Remark \ref{comp} b). We write $g(\xi,\rho,h(\xi,\rho))=0$ to denote that $h$ is a power series solution for $g$.
\end{defn}

The next lemma shows that any system $\Sigma$ of henselian equations
has a unique power series solution in $A_m[[\rho]]$.

\begin{lem}\label{lemps} Let $\Sigma$ be a system of henselian equations as in Definition \ref{hf2}.
Then $\Sigma$ has a unique power series solution, namely, there exist unique power series $h_i$ in $A_{m}[[\rho_1,\ldots,\rho_n]]$ for $i=1,\ldots,M+N$ such that $g(\xi,\rho,h(\xi,\rho))=0$ for each $g$ in $I_\Sigma$ with $h=(h_i)_i$. We denote the tuple $(h_i)_i$ by $h_\Sigma$.
\end{lem}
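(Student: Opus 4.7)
I would build $h_\Sigma = (h^\eta, h^\lambda)$ coefficient by coefficient in the $\rho$-adic expansion $h = \sum_{\mu\in\bN^n} h_\mu(\xi)\rho^\mu$ with $h_\mu \in A_m^{M+N}$. The base step $\mu = 0$ requires solving the restricted system
\[
\alpha_i = b_{0,i}(\xi,0) + b_{1,i}(\xi,0,\alpha,\beta),\qquad \beta_j = c_{1,j}(\xi,0,\alpha,\beta)
\]
in $A_m^{M+N}$, where $c_{0,j}$ contributes nothing because $c_{0,j}\in(\rho)A[\xi,\rho]$. Starting from the approximation $(b_0(\xi,0),0)$, the residuals sit in $I\cdot A_m^{M+N}$ (using that $b_{1,i}(\xi,0,\cdot,0), c_{1,j}(\xi,0,\cdot,0) \in I\cdot A_m$), so Newton iteration in $A_m$ yields a unique $(\alpha^*,\beta^*)$, with $\beta^*\in I\cdot A_m^N$ since $c_{1,j}(\xi,0,\eta,\lambda)\in(I,\lambda^2)$.

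For $|\mu|\geq 1$, extracting the $\rho^\mu$-coefficient of the henselian system after Taylor-expanding around $(\alpha^*,\beta^*)$ in the unknowns $(\eta,\lambda)$ yields a \emph{linear} equation
\[
(I - D_0)\,h_\mu \;=\; K_\mu,
\]
where $K_\mu\in A_m^{M+N}$ is a polynomial expression in the already-determined $h_\nu$, $|\nu|<|\mu|$, and
\[
D_0 \;=\; \begin{pmatrix} \partial_\eta b_1 & \partial_\lambda b_1 \\ \partial_\eta c_1 & \partial_\lambda c_1\end{pmatrix}\Big|_{(\xi,0,\alpha^*,\beta^*)} \in A_m^{(M+N)\times(M+N)}.
\]
The critical point is that modulo $I$, three of the four blocks vanish at this evaluation point: $\partial_\eta b_1,\partial_\eta c_1,\partial_\lambda c_1 \equiv 0\pmod I$, since $b_1\in(I,\rho,\lambda)$, $c_1\in(I,\rho,\lambda^2)$, and $\beta^*\in I\cdot A_m^N$. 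Hence $D_0$ is strictly upper-triangular mod $I$, so $\det(I-D_0)\equiv 1\pmod{I}$, and by Remark \ref{weakSNP}(b) the matrix $I-D_0$ is invertible in $A_m^{(M+N)\times(M+N)}$. Thus $h_\mu = (I-D_0)^{-1}K_\mu$ is uniquely determined in $A_m^{M+N}$. Assembling through the multi-indices $\mu$ (lower $|\mu|$ first) produces the unique element $h_\Sigma\in A_m[[\rho]]^{M+N}$.

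The main obstacle is the base case: solving the restricted system in $A_m$ demands a Hensel/implicit-function-theorem property of $A_m$ with respect to $I\cdot A_m$, providing convergence (not merely uniqueness at each Newton step). This Hensel property is not literally listed among the axioms (i)--(viii) of a pre-Weierstrass system but is contained in them: invertibility of $I-D_0$ in $A_m$ at each iterate is supplied by Remark \ref{weakSNP}(b), while convergence comes from the completeness built into $A_m$ (Gauss-norm completeness in the principal examples $A_m = K^\circ\langle\xi\rangle$). Once the base case is settled, the inductive step for $|\mu|\geq 1$ is pure linear algebra over $A_m$, and the uniqueness assertion of the lemma follows because every $h_\mu$ is forced by the construction.
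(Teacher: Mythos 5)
Your inductive step for $\lvert\mu\rvert\geq 1$ is correct and clean: the block computation showing $D_0$ is strictly upper-triangular modulo $I$ (using $\beta^*\in I\cdot A_m^N$ and the membership conditions on $b_1,c_1$), the invertibility of $I-D_0$ via Remark \ref{weakSNP}(b), and the resulting linear recursion for $h_\mu$ all go through. It is a genuinely different but essentially equivalent rearrangement of the paper's outer induction, which is on $n$ rather than on the $\rho$-multi-index.

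The gap is the base case, and you have flagged it yourself without closing it. You solve the restricted system
\[
\alpha_i = b_{0,i}(\xi,0) + b_{1,i}(\xi,0,\alpha,\beta),\qquad \beta_j = c_{1,j}(\xi,0,\alpha,\beta)
\]
by Newton iteration, appealing to ``completeness built into $A_m$'' for convergence. But $I$-adic completeness of $A_m$ is \emph{not} among the axioms (i)--(viii) of a strictly convergent pre-Weierstrass system, and it fails in examples the paper explicitly includes: the overconvergent ring $\Ko\langle\langle\xi\rangle\rangle$ is not $\Koo$-adically complete (an $\Koo$-adic Cauchy sequence of overconvergent series can converge to a merely strictly convergent one). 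So the Newton iterates are a Cauchy sequence with no guaranteed limit inside $A_m$. Your remark that the needed Hensel property ``is contained in [the axioms]'' is not correct --- the paper works hard precisely to avoid invoking any completeness. The device it uses is the substitution $\lambda_j = \sum_\ell c_{0,\ell}\,U_{j,\ell}$, which converts the quadratic system for $\lambda$ into a system for the new unknowns $U_{j,\ell}$ whose right-hand sides reduce to $0$ modulo $I$; such a system is regular of degree $1$ in each $U_{j,\ell}$ and is therefore solvable by \emph{successive strictly convergent Weierstrass preparation and division}, which is exactly axiom (vi). The $\eta$-equations then become regular of degree $1$ after substituting $\lambda(\eta)$ and are handled the same way. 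This is the essential idea you are missing: the axiom you actually have is Weierstrass preparation, not completeness, so the base case must be reduced to a cascade of degree-$1$ Weierstrass preparations rather than an $I$-adic limit.

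A secondary, smaller gap is uniqueness. Your closing sentence (``every $h_\mu$ is forced by the construction'') covers the inductive step, but in the base case uniqueness of $(\alpha^*,\beta^*)$ in $A_m$ (as opposed to in an $I$-adic completion) still needs an argument. The paper obtains it by perturbing $\lambda = \overline\lambda + \mu$ and showing the resulting system for $\mu$ has only the zero solution, using conditions (viiWNP) and (viii) together with the Gauss-norm on $A_m^\sigma$. Some argument of this type would be required to replace your implicit appeal to $I$-adic separatedness.
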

\begin{proof}We prove this by induction on $n$, with notation from \ref{hf2}.  For the induction step, assume $n > 0$.
 We proceed exactly as in the proof of Lemma \ref{Wprep}, below. Let $k >0$ and write $\eta_i = \sum_{\ell = 0}^k \eta_{i\ell}\rho_n^\ell$ and $\lambda_j = \sum_{\ell = 0}^k \lambda_{j\ell}\rho_n^\ell$, (where the  $\eta_{i\ell}, \lambda_{j\ell}$ are new variables). Substitute into the system of equations $\Sigma$ and do Weierstrass division by $\rho_n^k$ to obtain a henselian system that, by induction, determines the $\eta_{i\ell}$ and $\lambda_{j\ell}$ as power series in $\xi$ and $\rho_1, \cdots, \rho_{n-1}$.  See the proof of Lemma \ref{Wprep} for details.  Since $k$ is arbitrary, this determines $\eta$ and $\lambda$ as power series.

We must still
treat the case $n=0$, $m$ arbitrary. In this case we have that the $\eta_i, \lambda_j$ are defined by a system $\Sigma$ of the form
\begin{equation}\label{hs'}
\begin{split}
\eta_i &= b_{0,i} + b_{1,i}(\eta,\lambda) \quad i=1,\cdots,{m+M}\\
\lambda_j &= c_{0,j} + c_{1,j}(\eta,\lambda) \quad j=1,\cdots,N
\end{split}
\end{equation}
where the
\begin{equation}\label{hss'}
\begin{split}
\quad b_{0,i} &\in A_m, \qquad \quad b_{1,i} \in (I,\lambda)A_{{m+M},N}, \qquad  \text{  and the }\\
\quad c_{0,j} &\in IA_m, \quad \qquad c_{1,j} \in (I,\lambda^2)A_{{m+M},N}.
\end{split}
\end{equation}
If we allow the $c_{0,j} \in  A_{m+M}$ with $\widetilde c_{0,j} = 0$,  may assume that the $ c_{1,j}$ all have $0$ as constant (i.e. $\lambda$-free) term, and, after an $A_{{m+M}}$--linear change of variables among the $\lambda_j$ with Jacobian a unit in $A_{m+M}$, we may assume that the $c_{1,j}$ have no linear terms in $\lambda$.  Below we will see that the equations
\begin{equation}\label{hs''}
\lambda_j = c_{0,j} + c_{1,j}(\eta,\lambda) \quad j=1,\cdots,N
\end{equation}
with the $c_{0,j} \in A_{m+M}, \widetilde c_{0,j} = 0$ and the $c_{1,j} \in (\lambda^2)A_{{m+M},N}$,
have a unique solution with the $\lambda_j= \lambda_j(\eta) \in A_{m+M}$ and the $\widetilde  \lambda_j(\eta) =0$.  Substituting into the equations
$$
\eta_i = b_{0,i} + b_{1,i}(\eta,\lambda(\eta)) \quad i=1,\cdots,{m+M}
$$
yields a system of equations for the $\eta_i$, all regular of degree $1$, which we can solve successively by (strictly convergent) Weierstrass Preparation and Division, to get the solution with the $\eta_i \in A_m$.  Substituting into the solution for the $\lambda_j$ determines these as elements of $I\cdot A_m$.

Finally, it remains to show that the equations (\ref{hs''}) with the $c_{0,j} \in  A_{m+M}$ satisfying $\widetilde c_{0,j} = 0$ and the $c_{1,j} \in (\lambda^2)A_{{m+M},N}$ have a unique solution in $ A_{m+M}$ with $\widetilde \lambda = 0$.  In equation (\ref{hs''}) make the substitution
$$
\lambda_j = \sum _\ell c_{0,\ell}U_{j,\ell}
$$
where the $U_{j,\ell}$ are new variables.  This yields the equations
$$
\sum_\ell c_{0,\ell}U_{j,\ell} = c_{0,j} + \sum_{\alpha,\beta}c_{0,\alpha}c_{0,\beta}U_{0,\alpha}U_{0,\beta}G_{j\alpha \beta}(\eta,U)
$$
for $j = 1, \cdots, N$.
In place of the first equation (i.e.~$j=1$)
consider the system of equations
\begin{equation}
\begin{split}
c_{0,1}U_{1,1} &= c_{0,j} + c_{0,j}H_{1,1}(U)\\
c_{0,2}U_{1,2} &= c_{0,2} H_{1,2}(U)\\
\cdots  &\quad \cdots \\
c_{0,N}U_{1,N} &= c_{0,N} H_{1,N}(U),
\end{split}
\end{equation}
where in $c_{0,1}H_{1,1}$ we have collected all the terms of $\sum_{\alpha,\beta}c_{0,\alpha}c_{0,\beta}U_{0,\alpha}U_{0,\beta}G_{j\alpha \beta}(\eta,U)$ that have a factor $c_{0,1}U_{0,1}$, and in $c_{0,2}H_{1,2}$ all the remaining terms that have a factor $c_{0,2}U_{0,2}$, and so on.  Write down the analogous systems for $j = 2, \cdots , N$.  Now consider the system
\begin{equation}
\begin{split}
U_{1,1} &= 1 + H_{1,1}(U)\\
U_{1,2} &=  H_{1,2}(U)\\
\cdots  &\quad \cdots \\
U_{1,N} &=  H_{1,N}(U),
\end{split}
\end{equation}
and the analogous equations for  $j = 2, \cdots , N$.  In this system all the $H_{\alpha,\beta} \in  A_{{m+M} + N^2}$ satisfy $\widetilde H_{\alpha,\beta} = 0$, and hence the system can be solved for the $U_{\alpha,\beta}$ (as elements of $A_{m+M}$) by successive (strictly convergent) Weierstrass preparation and division.
This proves the existence of a solution $\overline \lambda(\eta) \in A_{m+M}^N$ to equations (\ref{hs''}).  For uniqueness substitute $\lambda_j = \overline \lambda_j + \mu_j, j= 1, \cdots, N$ in equations (\ref{hs''}), where the $\mu_j$ are new variables.
We must show that the resulting equations (in the $\mu_j$) which are of the form
\begin{equation}
\mu_j = \sum_\alpha d_\alpha \mu_\alpha + \sum_{\alpha, \beta} e_{\alpha,\beta}\mu_\alpha \mu_\beta
\end{equation}
where the $d_\alpha \in (\lambda)A_{{m+M},N}$ and the $e_{\alpha,\beta} \in A_{{m+M},N}$,
have only the zero solution.  By Condition (viiWNP) and Condition (viii) (this is the only place we use condition (viii)) it is sufficient to prove this in $A^\sigma_{{m+M},N}$ for each analytic structure $\sigma$.  But in $A^\sigma_{m+M}$ we have a Gauss-norm by \ref{gn} and it is easy to see that a non-zero solution to these equations is impossible -- suppose that we had a non-zero solution $\mu(\eta)$.  Look at the $j$th equation where $\mu_j(\eta)$ has biggest Gauss-norm. The Gauss-norm of the righthand side would be smaller than that of the lefthand side.
This completes the proof of the Lemma.
\end{proof}

\begin{defn}\label{compseries}
Let $h$ be the power series solution of a system $\Sigma$ of henselian equations as in Lemma \ref{lemps} and let $g$ be any power series in $A_{m+M,n+N}$. Write $g$ as $g_0+g_1$ where $g_0$ is in $A[\xi,\rho]$ and $g_1$ in $(I,\rho,\lambda)A_{m+M,n+N}$. Consider the system $\Sigma'$ for $(\eta,\eta_{M+1},\lambda)$ over $(\xi,\rho)$ consisting of $\Sigma$ and of the additional equation
$$
0 = \eta_{M+1} - g_0 - g_1.
$$
Then the $M+1$-th entry $h_{M+1}$ of the unique power series solution $h_{\Sigma'}=(h_i)_{i=1}^{M+1+N}$ of $\Sigma'$ is called the composition of $g$ with
the power series tuple $(\xi,\rho,h_\Sigma)$ and is denoted by $g(\xi,\rho,h_\Sigma(\xi,\rho))$.
\end{defn}

The proof of Lemma \ref{lemps} actually establishes the following version of the implicit function theorem in the power series rings $A_{m,n}=A_{m+n}$.

\begin{lem}\label{IFT} Let $\cA$ be a strictly convergent pre-Weierstrass system.
\item[(i)] Suppose that
$F_1(\xi,\eta), \cdots , F_N(\xi,\eta) \in A_{m+M}$ and  that $\overline y_1, \cdots , \overline y_N \in A_m$
satisfy $F_i(\xi,\overline y) \widetilde{~ } =0$ and $J(\xi, \overline y)$ is a unit in $A_m$, where
$J = |(\frac{\partial F_i}{\partial \eta_j})_{i,j=1,\cdots , N}|$.  Then there is a unique $y \in A_m^N$ such that
$F(\xi,y)= 0$ and $\widetilde y = \widetilde{ \overline y}$.
\item[(ii)]Suppose that $F_1(\xi,\rho,\eta,\lambda), \cdots , F_N(\xi,\rho,\eta,\lambda) \in A_{m+M, n+N}$ and  that polynomials $\overline y_1, \cdots , \overline y_{M+N} \in A_{m,n}$
satisfy $F_i(\xi,\rho,\overline y){ \widetilde{~ }} \equiv 0 \mod (\rho)$ and $J(\xi,\rho, \overline y) {\widetilde{ ~}}\equiv 1 \mod (\rho),$ where
$$J = \Big|\Big(\frac{\partial F_i}{\partial \eta_j, \partial \lambda_k}\Big)_{\underset {j=1,\cdots , M, k = 1 \cdots, N}{i=1,\cdots , M+N}}\Big|.$$  Then there is a unique $y \in A_m[[\rho]]^{M+N}$  such that  $\widetilde y \equiv \widetilde{ \overline y} \mod (\rho)$ and $F(\xi,y)= 0$, where the latter means that $y$ is a solution in the sense of Lemma \ref{lemps}.
In fact (by Proposition \ref{CH}, below) there is a polynomial henselian system $\Sigma'$ and a $G \in A_{m+n+M+N}^{M+N}$ such that $y(\xi,\rho) = G (\xi,\rho, h_{\Sigma'})$.
\end{lem}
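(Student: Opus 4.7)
The plan is to follow the argument of Lemma \ref{lemps} almost verbatim, reducing the implicit function problem in each case to a henselian system of the type treated there via the Newton substitution $y = \overline{y} + \mu$.

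For part (i), Taylor-expand to obtain
$$F_i(\xi,\overline{y}+\mu) \;=\; F_i(\xi,\overline{y}) + \sum_j M_{ij}(\xi,\overline{y})\mu_j + H_i(\xi,\mu),$$
where $M = (\partial F_i/\partial \eta_j)(\xi,\overline{y})$ and $H_i \in (\mu)^2 A_{m+N}$. The hypothesis $\widetilde{F_i(\xi,\overline{y})} = 0$ combined with Remark \ref{weakSNP}(c) gives $F_i(\xi,\overline{y}) \in I\cdot A_m$, while $J = \det M$ being a unit in $A_m$ makes $M^{-1}$ available over $A_m$ via the adjugate formula. Left-multiplying $F(\xi,\overline{y}+\mu) = 0$ by $M^{-1}$ converts it into the equivalent system
$$\mu_i = a_i + c_i(\xi,\mu), \qquad a_i \in I\cdot A_m, \qquad c_i \in (\mu)^2 A_{m+N},$$
which has precisely the shape of equation (\ref{hs''}) in the $n=0$ portion of the proof of Lemma \ref{lemps} (with $\mu$ playing the role of $\lambda$). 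The substitution $\mu_j = \sum_\ell a_\ell U_{j,\ell}$ followed by iterated strictly convergent Weierstrass preparation, together with the Gauss-norm uniqueness argument used there, yields a unique $\mu \in I\cdot A_m^N$, so $y := \overline{y} + \mu \in A_m^N$ is the desired unique solution.

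For part (ii), apply the analogous Newton substitution $\eta = \overline{y}^{(1)}+\mu^{(1)}$, $\lambda = \overline{y}^{(2)}+\mu^{(2)}$ and Taylor-expand to obtain
$$F_i(\xi,\rho,\overline{y}+\mu) \;=\; F_i(\xi,\rho,\overline{y}) + \sum_j M_{ij}(\xi,\rho,\overline{y})\mu_j + H_i(\xi,\rho,\mu) \;=\; 0,$$
with $F_i(\xi,\rho,\overline{y}) \in (I,\rho)A_{m,n}$ and $H_i \in (\mu)^2 A_{m+M,n+N}$. Specializing to $\rho = 0$, the hypothesis $\widetilde{J} \equiv 1 \pmod{(\rho)}$ gives $\widetilde{J(\xi,0,\overline{y}(\xi,0))} = 1$, so by Remark \ref{weakSNP}(b) the Jacobian at $\rho = 0$ is an invertible matrix $L \in \mathrm{Mat}_{M+N}(A_m)$; part (i) then produces $\mu(\xi,0) \in I\cdot A_m^{M+N}$ solving the problem modulo $(\rho)$. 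To lift order by order in $\rho$, perform the linear change of variables $\mu = L^{-1}\mu'$ to put the linear part of the system at $\rho = 0$ into identity form, and then follow the induction step of Lemma \ref{lemps} in the variables $\mu'$: truncate each $\mu'_i$ modulo $(\rho_n)^k$ as a polynomial of degree $<k$ with new unknown coefficients, substitute into the system, apply Weierstrass division by $\rho_n^k$, and extract a henselian subsystem in one fewer variable of the second kind. Induction on $n$ together with $k \to \infty$ produces a unique $\mu \in A_m[[\rho]]^{M+N}$, and $y := \overline{y} + \mu$ is the asserted unique solution.

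The principal technical obstacle is that the Jacobian matrix need \emph{not} be invertible over the full ring $A_{m,n}$: its determinant is only required to be a unit modulo $(I,\rho)$, and such elements can genuinely fail to be units in strictly convergent rings (e.g., $1 + \rho_1$ is typically not a unit in $\ZZ[[t]]\langle \rho_1 \rangle$). This is precisely why part (ii) must be handled by order-by-order $\rho$-adic induction rather than by a single global matrix inversion: invertibility is only ever invoked at $\rho = 0$, where Remark \ref{weakSNP}(b) supplies it directly. The closing sentence of the lemma, expressing $y$ as $G(\xi,\rho,h_{\Sigma'})$ for a \emph{polynomial} henselian system $\Sigma'$ and some $G \in A_{m+n+M+N}^{M+N}$, is explicitly deferred to Proposition \ref{CH}; informally, that proposition will absorb the non-polynomial constant terms produced in our reduction (the $F_i(\xi,\rho,\overline{y})$ and the adjugate-built entries of $M^{-1}$) into the outer composing function $G$, leaving a genuinely polynomial henselian core.
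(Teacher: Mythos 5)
Your proposal is correct and is essentially the argument the paper has in mind: the paper's ``proof'' of Lemma \ref{IFT} consists of the single sentence ``The proof of Lemma \ref{lemps} actually establishes the following version of the implicit function theorem,'' and what you have done is make that reduction explicit via the Newton substitution $y=\overline y+\mu$, inversion of the Jacobian over $A_m$ (using Remark \ref{weakSNP}(b) to turn the condition $\widetilde J=1$, resp. $\widetilde J\equiv 1\pmod{(\rho)}$ at $\rho=0$, into genuine invertibility of $L\in\mathrm{Mat}_{M+N}(A_m)$), and then an appeal to the machinery already built in Lemma \ref{lemps} --- the $U$--substitution and Gauss-norm argument for $n=0$, and the truncate-and-divide-by-$\rho_n^k$ induction for $n>0$. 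You also correctly identify the one genuine subtlety, namely that the Jacobian matrix of the part (ii) system is only invertible over $A_m$ after specializing $\rho=0$, not over all of $A_{m,n}$, which is exactly why the argument must proceed $\rho$--adically; and you rightly defer the closing assertion about a \emph{polynomial} henselian core to Proposition \ref{CH}, as the paper itself does. The only small streamlining one could note is that after the Newton substitution and the change of variables $\mu=L^{-1}\mu'$, the resulting system for $\mu'$ \emph{is} a henselian system of the form in Definition \ref{hf2} (with no $\eta$--equations, only $\lambda$--equations, after splitting the constant term $-L^{-1}F(\xi,\rho,\overline y)\in(I,\rho)A_{m,n}$ into a polynomial part in $(\rho)A[\xi,\rho]$ plus an $I$--small correction, which condition (viiWNP) and $\widetilde{A_{m,n}}=\widetilde A[\xi,\rho]$ make possible), so one may simply invoke Lemma \ref{lemps} as a black box rather than re-running its induction; but that is a cosmetic point, not a gap.
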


\begin{defn}
\label{hf22}
By Lemma \ref{lemps} and Definition \ref{compseries}
we can define the following
subalgebras of $A[[\xi,\rho]]$ by
$$
A_{m,n}^H := \{g(\xi,\rho,h_\Sigma(\xi,\rho)) : g \in A_{m+M,n+N} ,
\mbox{ $\Sigma$ a henselian system}  \}
$$
and
$$
A_{m,n}^{H'} := \{g(\xi,\rho,h_\Sigma(\xi,\rho)) : g \in A_{m+M,n+N} , \mbox{ $\Sigma$ a polynomial henselian system} \}.
$$

Put
$\cA^H := \{A^H_{m,n}\}_{m,n}$.
\end{defn}

\begin{prop}\label{CH}  Let $\cA$ be a strictly convergent pre-Weierstrass system.
One has $A_{m,n}^H = A_{m,n}^{H'}$. Indeed,
every element $f$ of $A_{m,n}^H$ is of the form $f(\xi,\rho) = g(\xi,\rho,h_\Sigma(\xi,\rho)) $  where $g \in A_{m',n'}$ for some $m',n' \in \bN$, and $h_\Sigma$ is the solution to a {\bf polynomial} henselian system.
\end{prop}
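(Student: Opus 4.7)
The inclusion $A_{m,n}^{H'}\subseteq A_{m,n}^H$ is immediate from the definitions, since every polynomial henselian system is in particular a henselian system; the content of the proposition is the reverse inclusion. I would start with an arbitrary element $f=g(\xi,\rho,h_\Sigma(\xi,\rho))$ of $A_{m,n}^H$, where $g\in A_{m+M,n+N}$ and $\Sigma$ is a henselian system with possibly analytic coefficients $b_{1,i}\in(I,\rho,\lambda)A_{m+M,n+N}$ and $c_{1,j}\in(I,\rho,\lambda^2)A_{m+M,n+N}$, and aim to produce a polynomial henselian system $\Sigma'$ (possibly with additional auxiliary unknowns) together with an outer analytic function $g'\in A_{m',n'}$ such that $f=g'(\xi,\rho,h_{\Sigma'}(\xi,\rho))$.

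The plan is to ``algebraize'' the analytic coefficients of $\Sigma$ by replacing them with polynomial expressions in new second-kind variables. Writing $z=(\xi,\rho,\eta,\lambda)$, I would apply the weak Noetherian property (viiWNP) to each $b_{1,i}$ to obtain a finite expansion
\[
b_{1,i}=\sum_{\mu\in J_i}a_\mu^{(i)}z^\mu(1+g_\mu^{(i)}),\qquad g_\mu^{(i)}\in I\cdot A_{m+M,n+N}
\]
(the ideal membership following from Remark \ref{weakSNP}c), and similarly for each $c_{1,j}$. Introducing fresh second-kind unknowns $\tau_\mu^{(i)},\theta_\nu^{(j)}$ in place of the analytic remainders and substituting, the $\eta_i$-- and $\lambda_j$--equations become polynomial in the enlarged variable set $(\xi,\rho,\eta,\lambda,\tau,\theta)$, and one checks that the ideal-membership constraints required by Definition \ref{hf2} are preserved since each $a_\mu^{(i)}z^\mu$ already sits in $(I,\rho,\lambda)A[\xi,\rho,\eta,\lambda]$.

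The remaining task is to give polynomial henselian defining equations for the new $\tau,\theta$ variables whose unique solutions are the correct analytic remainders. I would apply (viiWNP) again to each $g_\mu^{(i)}$ and $h_\nu^{(j)}$; since these lie in $I\cdot A$, all the new expansion coefficients lie in $I$, and at each step only finitely many further auxiliary variables appear. The central difficulty is that this recursion does not terminate on its own. To close it off, I would exploit the flexibility of the outer function $g'\in A_{m',n'}$: at a suitably chosen finite depth I would truncate by imposing trivial polynomial equations on the deepest auxiliary variables and absorb the residual analytic discrepancy into an enlarged outer function $g'$, which is possible thanks to closure of the pre-Weierstrass system under composition (Remark \ref{comp}b) and the analytic implicit-function content of Lemma \ref{IFT}. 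Uniqueness of the polynomial henselian solution $h_{\Sigma'}$ guaranteed by Lemma \ref{lemps}, combined with uniqueness of the analytic $h_\Sigma$, would then force the exact identity $f=g'(\xi,\rho,h_{\Sigma'}(\xi,\rho))$. The hard part is precisely this truncation-and-absorption step: verifying that a finite depth suffices and that the Jacobian conditions underlying Lemma \ref{IFT}(i) identify the polynomial henselian solution with the correct analytic values without sacrificing exactness.
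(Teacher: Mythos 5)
Your opening move is on the right track, but the proposal stalls exactly where you say it does, and the way you set it up makes the difficulty unnecessarily hard. Iterating (viiWNP) generates an unbounded tower of auxiliary unknowns, and the ``truncate at finite depth and absorb into the outer $g'$'' step you describe has no mechanism behind it: nothing in Remark \ref{comp}(b) or Lemma \ref{IFT} lets you silently absorb a genuinely analytic, non-henselian discrepancy into a composition of strictly convergent power series with a polynomial henselian tuple. You have accurately identified the place where the proof must close, but you have not supplied the idea that closes it, and the recursion you started will not terminate on its own.

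The paper's proof uses (viiWNP) only \emph{once} and in a different way, via Remark \ref{weakSNP}(c): since $\widetilde{b_{1,i}}$ and $\widetilde{c_{1,j}}$ are already polynomials (this is condition (iv) of Definition \ref{PWSs}), choose polynomial lifts $\overline b_{1,i},\overline c_{1,j}$ and deduce that $b_{1,i}-\overline b_{1,i}$ and $c_{1,j}-\overline c_{1,j}$ lie in $I\cdot A_{m+M,n+N}$. Solve the polynomial henselian system with right-hand sides $b_{0,i}+\overline b_{1,i}$ and $c_{0,j}+\overline c_{1,j}$ to get $\overline y,\overline z$, then make the affine change of variables $\eta=\overline y+\zeta$, $\lambda=\overline z+\zeta'$. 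After this change the residual system for $(\zeta,\zeta')$ has constant terms and coefficients that vanish modulo $(I,\rho,\lambda)$, and the decisive point --- the one your proposal is missing --- is that this residual system can then be solved \emph{inside} the rings $A_{m+M,n+N}$ themselves by the substitution $\zeta_i=\sum_\alpha\epsilon_\alpha U_{i,\alpha}$, $\zeta'_j=\sum_\alpha\epsilon_\alpha U'_{j,\alpha}$ and successive strictly convergent Weierstrass division, exactly as at the end of the proof of Lemma \ref{lemps}. No further henselian system, and hence no infinite regress, is needed: the analytic correction is itself a strictly convergent power series, which can be fed into the outer function $g$, while all genuinely ``henselian'' content is concentrated in the single polynomial system for $\overline y,\overline z$. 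That separation of roles --- polynomial henselian system for the coarse solution, Weierstrass division for the fine correction --- is the idea your truncation step was groping for but does not provide.
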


\begin{proof}
 Let
\begin{eqnarray*}
\eta_i &=& b_{0,i}(\xi,\rho) + b_{1,i}(\xi,\rho,\eta,\lambda) \quad i=1,\cdots,M\\
\lambda_j &=& c_{0,j}(\xi,\rho) + c_{1,j}(\xi,\rho,\eta,\lambda) \quad j=1,\cdots,N
\end{eqnarray*}
be a henselian system.  We will obtain the solution as the composition of functions in $\cA$ and the solution to a {\it polynomial} henselian system.  We can find polynomials $ \overline b_{1,i}, \overline c_{1,j}$ such that all the coefficients of
$ b_{1,i} -  \overline b_{1,i},  c_{1,j} - \overline c_{1,j}$ are $ \in I \cdot A$, i.e. $(b_{1,i} -  \overline b_{1,i}) \widetilde{~~} = (c_{1,j} - \overline c_{1,j})\widetilde{~~} = 0,$ or equivalently, using condition (viiWNP),
$b_{1,i} -  \overline b_{1,i}, c_{1,j} - \overline c_{1,j} \in I\cdot A_{m+M+n+N}.$
Let $\overline y = (\overline y_1, \cdots , \overline y_M, \overline z_1, \cdots , \overline z_N)$ be the solution to the polynomial henselian system
\begin{eqnarray*}
\eta_i &=& b_{0,i}(\xi,\rho) + \overline b_{1,i}(\xi,\rho,\eta,\lambda) \quad i=1,\cdots,M\\
\lambda_j &=& c_{0,j}(\xi,\rho) + \overline c_{1,j}(\xi,\rho,\eta,\lambda) \quad j=1,\cdots,N,
\end{eqnarray*}
so the $\overline y_i, \overline z_j$ are terms of $\cL_\cA^{D,H'}$ (see Definition \ref{lang1}, below).  Make the change of variables $\eta = \overline y +  \zeta$ and $\lambda = \overline z + \zeta'$.  Proceed exactly as in the last stage of the proof of Lemma \ref{lemps}, (i.e. use Lemma \ref{IFT}) writing $\zeta_i = \sum_\alpha \epsilon_\alpha U_{i,\alpha}$ and
$\zeta'_j = \sum_\alpha \epsilon_\alpha U'_{j,\alpha}$ to obtain a system of equations for the $U_{i,\alpha}, U'_{j,\alpha}$ that can be solved by (strictly convergent) Weierstrass division.
\end{proof}

\begin{lemdef}[Henselian functions]\label{hf2bis}
Let $\sigma$ be an analytic $\cA$-structure on a Henselian valued field $K$.
For any henselian system $\Sigma$ and any $\xi\in(\Ko)^m$ and $\rho\in (\Koo)^n$, there exist unique values $\eta\in (\Ko)^M$ and $\lambda\in (\Koo)^N$ such that $f^\sigma(\xi,\rho,\eta,\lambda)=0$ for all $f\in I_\Sigma$; we denote this tuple $(\eta,\lambda)$ by   $k_\Sigma^\sigma(\xi,\rho)$.
Moreover, the graph of the function $k_\Sigma^\sigma$ on $(\Ko)^m\times (\Koo)^N$ is quantifier free definable in $\cL_\cA$ by a formula which is independent of $\sigma$.
\end{lemdef}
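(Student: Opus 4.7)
The plan is to apply a multivariate Hensel's lemma to $\Sigma$, viewed as the fixed-point system
\[
F_i(\eta,\lambda) := \eta_i - b_{0,i}^\sigma(\xi,\rho) - b_{1,i}^\sigma(\xi,\rho,\eta,\lambda),\qquad G_j(\eta,\lambda) := \lambda_j - c_{0,j}^\sigma(\xi,\rho) - c_{1,j}^\sigma(\xi,\rho,\eta,\lambda),
\]
with initial approximation $(\eta^{(0)},\lambda^{(0)}) := (b_0^\sigma(\xi,\rho), c_0^\sigma(\xi,\rho))$. Since $b_{0,i}\in A[\xi,\rho]$ one has $\eta^{(0)}\in(\Ko)^M$, and since $c_{0,j}\in(\rho)A[\xi,\rho]$ with $\rho\in(\Koo)^n$ one has $\lambda^{(0)}\in(\Koo)^N$. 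Axiom~(1) of an analytic structure gives $\sigma_0(I)\subset\Koo$; combined with $\rho,\lambda^{(0)}\in\Koo$ and the memberships $b_{1,i}\in(I,\rho,\lambda)A$ and $c_{1,j}\in(I,\rho,\lambda^2)A$, this immediately yields $F_i(\eta^{(0)},\lambda^{(0)}),\,G_j(\eta^{(0)},\lambda^{(0)})\in\Koo$.

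The real work is checking the Jacobian condition. Differentiating $b_{1,i}$ by an $\eta_{i'}$ preserves the ideal $(I,\rho,\lambda)$, and differentiating $c_{1,j}$ by an $\eta_{i'}$ preserves $(I,\rho,\lambda^2)$, so both are in $\Koo$ after evaluation at the approximation. The decisive point is differentiation of $c_{1,j}$ by a $\lambda_{j'}$: because $c_{1,j}\in(I,\rho,\lambda^2)A$, applying $\partial/\partial\lambda_{j'}$ to a generator $\lambda_k\lambda_\ell h$ still leaves a factor of $\lambda$ behind, so $\partial c_{1,j}/\partial\lambda_{j'}\in(I,\rho,\lambda)A$, evaluating to $\Koo$ as well. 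This is precisely where the asymmetric choice of ideals in Definition~\ref{hf2} --- $(I,\rho,\lambda)$ for $b_{1,i}$ versus $(I,\rho,\lambda^2)$ for $c_{1,j}$ --- is calibrated. The only Jacobian block that need not be small is $\partial b_{1,i}/\partial\lambda_{j'}$, but this only affects the upper-right block. Thus
\[
J \equiv \begin{pmatrix} I_M & \ast\\ 0 & I_N\end{pmatrix} \pmod{\Koo},
\]
with $\det J\equiv 1\pmod{\Koo}$, a unit. Multivariate Hensel's lemma (for analytic maps on a henselian valued field with analytic structure; one may reduce to the univariate case by successive Weierstrass preparation if one prefers) then yields a unique solution $(\eta,\lambda)$ in the Hensel ball $(\eta^{(0)},\lambda^{(0)})+(\Koo)^{M+N}$. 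Global uniqueness in $(\Ko)^M\times(\Koo)^N$ is then automatic: any solution $(\eta',\lambda')$ there satisfies $\eta'_i-\eta^{(0)}_i = b_{1,i}^\sigma(\xi,\rho,\eta',\lambda')\in\Koo$ and similarly for $\lambda'$, so every solution already lies in that Hensel ball.

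For the definability claim, the sort conditions $\xi\in(\Ko)^m$, $\rho\in(\Koo)^n$, $\eta\in(\Ko)^M$, $\lambda\in(\Koo)^N$ are each quantifier-free definable via the division symbol $|$ of $\cL_\cA$, and the equations $F_i = G_j = 0$ involve only the function symbols for $b_{0,i},b_{1,i},c_{0,j},c_{1,j}\in A_{m+M+n+N}$, all already in $\cL_\cA$. Conjoining these yields a quantifier-free $\cL_\cA$-formula whose $\sigma$-interpretation in any henselian $\cA$-analytic field is exactly the graph of $k_\Sigma^\sigma$, and the formula is purely syntactic in $\Sigma$ and hence independent of $\sigma$. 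I expect the main obstacle to be the Jacobian bookkeeping, in particular seeing clearly why the exponent $\lambda^2$ --- not $\lambda$ --- is the right one to include in the defining ideal of the $c_{1,j}$; the rest is routine given that multivariate Hensel is available in this framework.
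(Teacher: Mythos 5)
Your Jacobian bookkeeping is correct and is indeed the combinatorial heart of why the asymmetric ideals $(I,\rho,\lambda)$ versus $(I,\rho,\lambda^2)$ in Definition~\ref{hf2} are calibrated the way they are. But there is a genuine gap at the step where you invoke ``multivariate Hensel's lemma (for analytic maps on a henselian valued field with analytic structure).'' A henselian valued field gives you, by definition, only \emph{polynomial} Hensel (that is Lemma~\ref{hf3}(i)). The entries $b_{1,i}^\sigma,c_{1,j}^\sigma$ of your system are $\sigma$-interpretations of power series, not polynomials, and $K$ is not assumed complete, so neither a polynomial Hensel nor a Banach-fixed-point argument applies off the shelf. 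Your proposed fallback --- reduce to the univariate case by successive Weierstrass preparation --- also fails at this point in the development: the equations $\lambda_j - c_{0,j}-c_{1,j}$ are not regular in $\lambda_j$ in the \emph{strictly convergent} sense of Definition~\ref{regulars} (since $\widetilde c_{1,j}$ may contain $\lambda_j^k$ for $k\geq 2$), only in the \emph{separated} sense of Definition~\ref{regular}(ii); and separated Weierstrass preparation for $\{A_{m,n}^H\}$ is not established until Proposition~\ref{Wprep}, which comes \emph{after} Lemma-Definition~\ref{hf2bis} and in fact relies on it (via Lemma~\ref{recap0}).

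The paper's proof avoids this by first handling polynomial henselian systems (where Lemma~\ref{hf3}(i) is directly applicable) and then noting that the reduction argument of Proposition~\ref{CH} --- replace $b_{1,i},c_{1,j}$ by polynomial truncations whose differences have coefficients in $I$, solve the resulting polynomial system, change variables to the correction $(\zeta,\zeta')$, and rewrite $\zeta_i=\sum_\alpha\epsilon_\alpha U_{i,\alpha}$ so the $U$-system becomes strictly convergent regular of degree~$1$ and hence solvable by strictly convergent Weierstrass division --- carries over verbatim to a field with analytic $\cA$-structure. That reduction is what converts the non-polynomial system into something polynomial Hensel can reach, and it is exactly the ingredient missing from your sketch. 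Your argument would become a complete proof if, instead of appealing to a black-box analytic Hensel, you imported that reduction (or at least cited Proposition~\ref{CH} and Lemma~\ref{hf3}(i) as the paper does). The definability paragraph at the end of your proposal is fine, modulo the existence and uniqueness established this way.
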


\begin{proof}
For a  polynomial system $\Sigma$ this is clear by the usual form of Hensel's lemma for systems of polynmial equations with Jacobian of norm $1$, see (i) of Lemma \ref{hf3} below. For a general $\Sigma$ the lemma now follows from the observation that the argument of Proposition \ref{CH} applies also in any field with analytic $\cA$--structure, and hence applies also to the $k_\Sigma^\sigma$.
\end{proof}

\begin{rem}For henselian system $\Sigma$ we have defined $h_\Sigma$ as a power series and $k_\Sigma$ as a (new) function symbol (or term) whose interpretation $k_\Sigma^\sigma$ in a field with analytic $\cA$--structure is defined in Lemma-Definition \ref{hf2bis}.  Later we will extend strictly convergent analytic structures $\sigma$ to separated analytic structures by defining $h_\Sigma^\sigma = k_\Sigma^\sigma$.  This will require an additional assumption (``goodness") on $\cA$ (Definition \ref{good}).
\end{rem}

The following henselian witnessing functions are much more basic than the ones associated to a system of equations in \ref{hf2bis}. Nevertheless, as we will see by (ii) of Lemma \ref{hf3}, they can in fact replace the $h_\Sigma$.

\begin{defn}[Basic henselian functions] \label{hf1}  For $K$ a henselian field $h_n : K^{n+1} \to K$ is the function that associates to $(a_0, \cdots , a_{n}, b) \in \Ko$ the unique zero, $c$, of the polynomial $p(x) := a_n x^n + a_{n-1}x^{n-1} + \cdots + a_0$ that satisfies $|c-b| < 1$, if $|p(b)| < 1$ and $|p'(b)| =1$. (Let $h_n$ output $0$ in all other cases).  The graphs of these functions are obviously quantifier-free definable in $\cL$.
\end{defn}

\begin{defn}[Languages]\label{lang1}
For $L$ a language containing the language of valued fields $\cL:=(+,\cdot,^{-1},0,1,\mid)$, let $L^D$ be $L$ with the field inverse replaced by the binary function symbol $D$ for restricted division (or two such symbols in the separated case). Further, let
$L_A$ be $L$ with constants for the elements of $A$ adjoined,
let $L_\cA$ be $L$ with function symbols for the elements of $\bigcup_m A_m$ adjoined, and let $L^{h}$ be $L$ with function symbols for the basic henselian functions $h_{n}$ adjoined.
Recall that $\mid$ is a binary symbol on a valued ring (or valued field) such that $x\mid y$ holds if and only if $x$ is nonzero and the valuation of $x$ is less or equal to the valuation of $y$.
When we use a language including the field inverse the intended interpretations are valued fields $F$, and when we replace the field inverse by restricted division the intended interpretations are valuation rings $F^\circ$.
\end{defn}

In the next Remark we comment on the different kinds of division, and how the results relate.
\begin{rem}\label{AQE}In some papers (e.g. \cite{LL2}, \cite{LR2}, \cite{LR3}, and also in Theorems \ref{QED} and \ref{QELR2} below) we have worked in the languages
$\cL_{\cA(K)}^D$ and $\cL_{\cA_{sep}(K)}^D$, in which we omit the function for field inverse and include symbols for restricted division, and in others (e.g. \cite{CL1}) we have worked in the language $\cL_{\cA(K)}$ of the Introduction, in which we have a symbol for field inverse.
We remark that  $\Kalgo$ has quantifier elimination in the language $\cL_{\cA(K)}^D$ (or $\cL_{\cA^H(K)}^D$ or $\cL_{\cA_{sep}(K)}^D$) if and only if $\Kalg$ has quantifier elimination in the language $\cL_{\cA(K)}$ (or $\cL_{\cA^H(K)}$  or $\cL_{\cA_{sep}(K)}$).  We can cover $(\Kalg)^n$ by polydiscs of the form $\{x:  \bigwedge_i ( |x_i|  \Box_i 1) \}$ where the $\Box_i \in \{\leq, \geq \}$, and we can map such a polydisc to
$(\Kalgo)^n$ by sending $x_i \mapsto x_i$ if $\Box_i$ is $\leq$ and $x_i \mapsto (x_i)^{-1}$ if $\Box_i$ is $\geq$.
For $\cF$ any of $\cA(K)$, $\cA^H(K)$ or $\cA_{sep}(K)$, it is clear that
 any $\cL_{\cF}^D$--quantifier-free definable subset of $(\Kalgo)^n$ is $\cL_{\cF}$--quantifier-free definable.  For the converse we can easily show by induction on the number, $k$, of occurrence of symbols from $\bigcup A_{m,n}$ in the formula, that an $\cL_{\cF}$--quantifier-free definable subset of
$(\Kalgo)^n$ is indeed $\cL_{\cF}^D$--quantifier-free definable.  (By the same induction show that
if an $\cL_{\cF}$--term on a quantifier-free $\cL_{\cF}^D$--definable subset $X$ of $(\Kalgo)^n$ is always $\leq 1$ in size, then it is equivalent to an $\cL_{\cF}^D$--term. Actually it is sufficient to just establish this piecewise on a quantifier-free $\cL_{\cF}^D$--definable cover of $X$, but see the proof of Lemma \ref{hf3} for combining terms defined piecewise into one term.)
\end{rem}

Statement (ii) of the following proposition may seem surprising but comes in part from the fact that piecewise definitions of terms on certain pieces can be combined into a single term.

\begin{lem}\label{hf3} \item[(i)] If $K$ is a henselian field, then $\Ko$ satisfies the Implicit Function Theorem:  Given  a system  $P_i(y_1, \cdots , y_n)=0, i=1, \cdots,n$ of $n$ polynomial equations in $n$ unknowns over $\Ko$, and $\overline y = (\overline y_1,\cdots , \overline y_n) \in (\Ko)^n$  such that the $P_i(\overline y) \in \Koo$ and the Jacobian
$ \big|\big(\frac{\partial P_i}{\partial y_j }\big)_ {i,j=1,\cdots , n}\big|(\overline y)$
at the point $\overline y$  is a unit, there is  a unique  $\widetilde y \in (\Ko)^n$ with  $\overline y - \widetilde y \in (\Koo)^n$ satisfying $P(\widetilde y)=0$.
\item[(ii)]
Let $\tau$ be a term of $\cL_{\cA^H}^{D}$.  There is a term $\tau'$ of $\cL_\cA^{D,h}$ such that for every field $K$ with analytic $\cA$--structure, say, via $\sigma$, we have  $\tau^\sigma = \tau'^\sigma.$
\item[(iii)]
Let $\tau$ be a term of $\cL_{\cA^H}$.  There is a term $\tau'$ of $\cL_\cA^{h}$ such that for every field $K$ with analytic $\cA$--structure, say, via $\sigma$, we have  $\tau^\sigma = \tau'^\sigma.$
\end{lem}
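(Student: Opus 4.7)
My plan is to treat the three parts in order, with (i) providing the analytic foundation, (ii) combining (i) with Proposition~\ref{CH} to eliminate the $\cA^H$-symbols one at a time in any term, and (iii) being the analogue of (ii) with field inverse in place of restricted division.

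For part (i), after translating so that $\overline y = 0$, the hypothesis becomes $P_i(0) \in \Koo$ and $J(0) := \det(\partial P_i/\partial y_j)(0)$ a unit of $\Ko$. Left-multiplying the column vector $(P_1,\dots,P_n)^T$ by the adjugate $\mathrm{adj}(DP)(0) \in (\Ko)^{n \times n}$ produces an equivalent polynomial system in which the $i$-th equation, viewed as a polynomial in $y_i$ with the remaining $y_j$ set to $0$, is regular of degree $1$ in the Hensel sense. Standard iterative application of the univariate form of Hensel's lemma (which holds by the definition of a henselian field) yields a unique $\widetilde y$; uniqueness is also automatic from the unit Jacobian hypothesis.

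For part (ii), by induction on term complexity it suffices to replace each occurrence of a function symbol $f$ for an element of $\bigcup_{m,n} A^H_{m,n}$ by an $\cL_\cA^{D,h}$-term computing $f^\sigma$. Using Proposition~\ref{CH}, write $f = g(\xi,\rho,h_\Sigma)$ with $g \in A_{m',n'}$ and $\Sigma$ a \emph{polynomial} henselian system, so the problem reduces to realizing $k_\Sigma^\sigma$ by an $\cL_\cA^{D,h}$-term. The shape imposed in Definition~\ref{hf2} forces the Jacobian of such $\Sigma$ with respect to $(\eta,\lambda)$ to be the identity matrix modulo $\Koo$, hence a unit. Imitating the reduction of part (i)---multiply the vector of defining polynomials by the adjugate of the Jacobian, using $D$ to access its unit inverse, and resolve each of the resulting ``nearly diagonal'' equations via $h_n$ applied to the appropriate single-variable polynomial---produces the desired term. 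I expect the main obstacle to be that this construction is inherently piecewise: the hypotheses of $h_n$ (Definition~\ref{hf1}) and the regimes in which $D$ returns a nonzero value hold only on certain quantifier-free definable subsets, so one must glue finitely many local expressions into a single global term. As the lemma statement anticipates, this gluing is possible: given a quantifier-free $\cL_\cA^{D,h}$-definable partition $(\Ko)^k = \bigsqcup_i X_i$ with terms $\tau_i$ defined on $X_i$, characteristic functions built from $D$ (using that $D(a,b) = 0$ off the disc $|a| \leq |b|$ and $D(a,b) = a/b$ on it) assemble the $\tau_i$ into a single term, with value $0$ outside all pieces, consistent with the extended-by-zero convention for henselian function symbols.

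Part (iii) is the same argument as (ii) with the field inverse $^{-1}$ in place of the restricted division $D$: the only use of $D$ in (ii) was to invert the Jacobian determinant, which is a unit of $\Ko$, and on units of $\Ko$ the field inverse and restricted division agree. Alternatively, one can deduce (iii) from (ii) via Remark~\ref{AQE}, working piecewise on polydiscs $\{x : \bigwedge_i |x_i| \mathbin{\Box_i} 1\}$ with $\Box_i \in \{\leq, \geq\}$ and transporting coordinates via $x_i \mapsto x_i^{-1}$ on the ``$\geq 1$'' pieces before applying (ii).
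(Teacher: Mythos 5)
For part (i) you supply a direct argument (adjugate matrix followed by iterated univariate Hensel), whereas the paper simply cites Kuhlmann. Your sketch is essentially right in spirit, though it should be noted that ``resolve the $i$-th equation in $y_i$ with the other $y_j$ set to $0$'' is not literally what is needed; one must be careful that the successive substitutions still admit a Hensel step. As an aside, your claim that the Jacobian of a henselian system as in Definition~\ref{hf2} is \emph{the identity} modulo $\Koo$ is slightly off: the block $\partial b_{1,i}/\partial\lambda_j$ need not vanish modulo $\Koo$ (e.g.\ $b_{1,i}=\lambda_j\eta_k$ has $\partial/\partial\lambda_j$ equal to $\eta_k$). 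The correct statement is that the Jacobian is unipotent block upper-triangular modulo $\Koo$; its determinant is therefore still $1$ modulo $\Koo$, which is all that is needed.

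The real divergence, and the real gap, is in part (ii). You correctly identify the first reduction (Proposition~\ref{CH}, reducing to $k_\Sigma$ for a \emph{polynomial} henselian system $\Sigma$) and the final gluing mechanism (characteristic functions from $D$, exactly as the paper uses), but in between you try to \emph{explicitly construct} the $\cL_\cA^{D,h}$-term computing $k_\Sigma^\sigma$ by ``multiply the vector of defining polynomials by the adjugate of the Jacobian \ldots and resolve each of the resulting nearly diagonal equations via $h_n$.'' This does not work as described. First, the adjugate of the Jacobian is a polynomial in $(\xi,\rho,\eta,\lambda)$, so evaluating it at the solution is circular; this is repairable by evaluating at the crude approximation $(\eta,\lambda)=(b_0(\xi,\rho),0)$, since the Jacobian modulo $\Koo$ is the same there. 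Second, and more fundamentally, ``resolving each equation via $h_n$'' does not yield the exact solution of the \emph{system} in a single application: $Q_i$ depends on all of $(\eta,\lambda)$, so freezing the other coordinates at their crude values and applying $h_n$ only produces an \emph{approximation}; the exact solution requires a Newton-type iteration, which is not a finite composition of $h_n$'s, or a resultant-type elimination, which need not be regular of the right degree at the starting point. The paper avoids this entirely: it uses Lemma-Definition~\ref{hf2bis} (the graph of $k_\Sigma^\sigma$ is quantifier-free $\cL_\cA$-definable, uniformly in $\sigma$) together with a compactness argument to extract \emph{some} finite family of $\cL_\cA^{D,h}$-terms that cover all models, without ever having to say which terms they are, and then glues them with the same $D$-built characteristic functions you describe. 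If you want to salvage a more explicit route you would need a genuine term-level reduction of multivariate Hensel to the basic $h_n$'s, and that is a non-trivial statement you have not supplied.

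For part (iii) your first justification is incorrect: you assert that $D$ was only used to invert the Jacobian determinant, but in your own account of (ii) you also used $D$ to build the characteristic functions that glue the pieces, and those applications are outside the unit-inversion regime. The conclusion is still salvageable, since characteristic functions can be assembled from the full field inverse and the $h_n$'s (e.g.\ $x\cdot x^{-1}$ for $x\neq 0$, $-h_1(x,1,0)$ for $x\cdot\chi_{|x|<1}$), which is presumably what the paper's ``similar to (ii)'' intends; but as written your justification for (iii) rests on a false premise.
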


\begin{proof}
\item[(i)] \cite{Kuh1} Theorem 24 or \cite{Kuh2} Theorem 9.13.
\item[(ii)] By Lemma \ref{CH} and Lemma-Definition \ref{hf2bis}, we need only prove this for terms $k_\Sigma$ defined by polynomial henselian systems $\Sigma$.  Let $\Sigma$ be given. It follows from \ref{hf2bis} by a standard compactness argument that there are finitely many terms $\tau_i(\xi,\rho)$ of $\cL_\cA^{D,h}$ such that in each henselian field $K$ with analytic $\cA$--structure and for any $(\xi,\rho) \in (\Ko)^m\times(\Koo)^n$ at least one of the $\tau_i$ satisfies $\Sigma$ at $(\xi,\rho)$, that is,   $\tau_i(\xi,\rho) = k_\Sigma^\sigma(\xi,\rho)$.
Let $\phi_i(\xi,\rho)$ be the quantifier-free formula that says that $\tau_i(\xi,\rho)$ satisfies $\Sigma$ and for $j < i, \tau_j(\xi,\rho)$ does not satisfy $\Sigma$.  Observe, by induction on formulas, that for every quantifier-free $\cL_\cA^{D,h}$--formula $\phi$ there is an $\cL_\cA^{D,h}$--term $\chi_\phi$ that gives the characteristic function of $\phi$.  (The characteristic function of $x \neq 0$ is $D(x,x)$ and the characteristic function of $0 \neq |x| \leq |y|$ is $D(D(x,y),D(x,y))$).  Now take $\tau' := \sum \chi_{\phi_i} \cdot \tau_i$.
\item[(iii)] The proof is similar to (ii).
\end{proof}

\subsection{Weierstrass Preparation for $\{A_{m,n}^H\}$} In this section we will show that the
$A_{m,n}^H$ satisfy {\it separated} Weierstrass Preparation (and thus Division).

Our first main result of this paper can be stated as follows with terminology from \cite{CL1} which is recalled in the appendix to this paper. The proof of the theorem boils down to the Weierstrass Preparation statement provided by Proposition \ref{Wprep}.

\begin{thm}\label{MainThm}Let $\cA$ be a strictly convergent $(A,I)$--pre-Weierstrass system (Definition \ref{PWS}).
$\cA^H := \{A_{m,n}^H\}$ is an $(A, I)$--Separated pre-Weierstrass system.
\end{thm}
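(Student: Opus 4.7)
The plan is to check each axiom defining a separated pre-Weierstrass system for $\cA^H=\{A_{m,n}^H\}$ in turn. Most of the axioms are structural and fall out of the construction of the $A_{m,n}^H$; the real content sits in establishing separated Weierstrass Preparation in both kinds of variables, which the author signals is Proposition \ref{Wprep}. Once that is in place, Weierstrass Division (in both variables) is automatic from Proposition \ref{2.4} of Section \ref{WPWD}, and the remaining axioms transfer from the corresponding properties of $\cA$.

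First I would handle the easy structural axioms. The equality $A^H_{0,0}=A$ and the inclusion $A^H_{m,n}\subset A[[\xi,\rho]]$ are built into Lemma \ref{lemps}. Closure of $A^H_{m,n}$ under permutations of the $\xi$-variables and (separately) the $\rho$-variables is immediate from the symmetric role these variables play in henselian systems (Definition \ref{hf2}), and the inclusions $A^H_{m,n}[\xi_{m+1}]\subset A^H_{m+1,n}$, $A^H_{m,n}[\rho_{n+1}]\subset A^H_{m,n+1}$ follow by absorbing the adjoined polynomial into the outer $g$ in Definition \ref{compseries}. The residue condition identifies $(A^H_{m,n})\;\widetilde{}\;$ with the separated polynomial/power-series ring $\widetilde A[\xi][[\rho]]$: because $b_{1,i}\in(I,\rho,\lambda)A_{m+M,n+N}$ and $c_{0,j}\in(\rho)A[\xi,\rho]$ with $c_{1,j}\in(I,\rho,\lambda^2)A_{m+M,n+N}$ in any henselian system, the $\eta$-components of $h_\Sigma$ reduce modulo $I$ to polynomials in $\xi$ and the $\lambda$-components reduce to elements of $(\rho)\widetilde A[\xi][[\rho]]$, from which the claim follows by routine induction.

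The main obstacle is separated Weierstrass Preparation for $\cA^H$. For $f=g(\xi,\rho,h_\Sigma)\in A^H_{m,n}$ regular of degree $d$ in $\rho_n$, I would enlarge the henselian system $\Sigma$ by adjoining the sought Weierstrass data $u$ and $r_0,\ldots,r_{d-1}$ as new unknowns and encoding $f=u\cdot(\rho_n^d+\sum r_i\rho_n^i)$ as additional henselian equations; the regularity hypothesis on $f$ is exactly what forces these new equations to fit the template of Definition \ref{hf2}, so Lemma \ref{lemps} (equivalently Lemma \ref{IFT}) supplies the unique solution, placing $u\in A^H_{m,n}$ and the $r_i\in A^H_{m,n-1}$. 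The harder case is regularity in $\xi_m$: here the desired Weierstrass polynomial already exists at the slice $\rho=0$ by condition (vi) for the underlying strictly convergent $\cA$, and one must propagate that slice-level decomposition into the $\rho$-direction by again setting up an augmented henselian system whose solutions deliver the full Weierstrass data. This is where the distinction in Definition \ref{hf2} between the polynomial parts $b_{0,i},c_{0,j}$ and the small corrections $b_{1,i},c_{1,j}$ does its real work, and where I expect the bulk of the bookkeeping, including an appeal to Proposition \ref{CH} to pass between general and polynomial henselian systems.

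With Weierstrass Preparation established, Weierstrass Division in both kinds of variables is immediate from Proposition \ref{2.4}, and the separated analog of the coefficient-extraction axiom then follows by iterated Division by $\xi_{m+1}$, resp.~$\rho_{n+1}$. The weak noetherian property for $A^H_{m,n}$ is inherited from (viiWNP) for $\cA$: apply (viiWNP) to the outer $g\in A_{m+M,n+N}$ in $f=g(\xi,\rho,h_\Sigma)$ and compose the resulting decomposition with $h_\Sigma$, using Proposition \ref{CH} to rewrite the outcome inside $\cA^H$. Finally, axiom (viii) for $\cA^H$ reduces to (viii) for $\cA$: by Lemma-Definition \ref{hf2bis} the power series $h_\Sigma$ is interpreted uniformly across all analytic structures as the function $k_\Sigma^\sigma$, so if $f^\sigma=g^\sigma(\xi,\rho,k_\Sigma^\sigma)$ vanishes in every analytic $\cA$-structure, then the interpretation of $g$ itself must vanish identically at the universal point $(\xi,\rho,k_\Sigma^\sigma)$, forcing $g=0$ by (viii) for $\cA$, whence $f=0$.
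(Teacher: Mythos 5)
Your proof follows exactly the paper's route: properties (1)--(4) of Definition \ref{sepWS} are checked directly, property (6) is reduced via Proposition \ref{2.4} to separated Weierstrass Preparation (the content of Proposition \ref{Wprep}, proved by augmenting the henselian system with new unknowns for the Weierstrass data, in the spirit of Artin), and property (5) follows from the Division thus obtained. The core argument is correct and is essentially the argument the paper gives.

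Two small corrections, neither of which affects the validity of the argument but both of which reflect points worth tightening. First, your closing paragraph verifies a weak noetherian property and an analogue of axiom (viii) for $\cA^H$, but these are not among the defining conditions of a \emph{separated} pre-Weierstrass system: Definition \ref{sepWS} consists only of (1)--(6). Conditions (viiWNP) and (viii) are axioms of the strictly convergent pre-Weierstrass system $\cA$ itself (Definition \ref{PWS}), and the strong noetherian condition (7) of Definition \ref{con7} is what upgrades a separated \emph{pre}-Weierstrass system to a separated Weierstrass system --- a separate matter, handled in this paper via Proposition \ref{complete} and Lemma \ref{SNPstr}, not claimed by Theorem \ref{MainThm}. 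Second, in your verification of (4) you assert that the $\eta$-components of $h_\Sigma$ reduce modulo $I$ to polynomials in $\xi$; in general they can have honest $\rho$-dependence (for the one-equation system $\eta = 1 + \rho\eta$ one gets $\widetilde\eta = (1-\rho)^{-1} \in \widetilde A[[\rho]]$). What is true, and what condition (4) requires, is only that the residues land in $\widetilde A[\xi][[\rho]]$, and your subsequent induction does deliver that inclusion.
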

\begin{proof} We need to verify the properties (1) up to (6) from Definition \ref{sepWS}. Each of properties (1) -- (4) is either immediate or easy.
To prove property (6) it is sufficient, by the results of Section \ref{WPWD} to prove Weierstrass Preparation, which we do in Proposition \ref{Wprep}. Property (5) is an immediate consequence of Weierstrass division in the form of property (6).
\end{proof}

Let us recall the definition of regularity in the case of separated power series.

 \begin{defn}[Separated Regular]
\label{regular}
With the above notation where $A$ is a ring and $I \subset A$ is a proper ideal,  let $f \in A[[\xi_1,\ldots,\xi_m,\rho_1,\ldots,\rho_n]]$, and let $J$ be the
ideal
$$
J:=\big\{\sum_{\mu,\nu} a_{\mu,\nu} \xi^\mu\rho^\nu \in A[[\xi,\rho]] :
a_{\mu,\nu} \in I \big\}
$$
of $A[[\xi,\rho]]$.
 \item[(i)]
$f$ is called {\em regular in $\xi_m$ of degree $d$} when $f$ is
congruent in $A[[\xi,\rho]]$ to a monic polynomial in $\xi_m$ of
degree $d$, modulo the ideal $B_1 \subset A[[\xi,\rho]]$, with
$$
B_1:=J+(\rho)A[[\xi,\rho]].$$
 \item[(ii)]
$f$ is called {\em regular in $\rho_n$ of degree $d$} when $f$ is
congruent in $A[[\xi,\rho]]$ to $\rho_n^d$, modulo the ideal
$B_2 \subset A[[\xi,\rho]]$, with
$$
B_2 := J+(\rho_1,\ldots,\rho_{n-1},\rho_n^{d+1})A[[\xi,\rho]].
$$
\end{defn}

\begin{prop} \label{Wprep}
\item[(i)]Let $f (\xi, \rho) \in A^H_{m,n}$ be regular in $\xi_m$ of degree $s$.  Let $\xi' = (\xi_1, \cdots , \xi_{m-1})$.  There are unique $f_0, \cdots , f_{s-1} \in A^H_{m-1,n}$, and unit
$U \in A^H_{m,n}$ such that
$$
f = U\cdot \big(\xi_m^s + f_{s-1}(\xi',\rho) \xi_m^s + \cdots + f_1(\xi',\rho) \xi_m + f_0(\xi',\rho)\big).
$$
\item[(ii)]Let $f (\xi, \rho) \in A^H_{m,n}$ be regular in $\rho_n$ of degree $s$.  Let $\rho' = (\rho_1, \cdots , \rho_{n-1})$.  There are unique $f_0, \cdots , f_{s-1} \in (I,\rho_1,\cdots,\rho_{n-1})A^H_{m,n-1}$, and unit
$U \in A^H_{m,n}$ such that
$$
f = U\cdot \big(\rho_n^s + f_{s-1}(\xi,\rho') \rho_n^s + \cdots + f_1(\xi,\rho') \rho_n + f_0(\xi,\rho')\big).
$$
\end{prop}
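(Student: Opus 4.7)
The plan is to realise the Weierstrass data as the unique power series solution of an augmented henselian system built on a henselian representation of $f$, then to extract that solution in $A_{m,n}^H$ via Lemma~\ref{lemps}. By Proposition~\ref{CH}, I would first write $f(\xi,\rho)=g(\xi,\rho,h_\Sigma(\xi,\rho))$ for some $g\in A_{m+M,n+N}$ and some polynomial henselian system $\Sigma$.

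For part (ii), introduce new \emph{second-kind} unknowns $\nu$ (for $U-1$) and $\mu_0,\dots,\mu_{s-1}$ (for the Weierstrass coefficients). Taking $\nu$ to be second-kind is essential: the regularity hypothesis forces $U-1\in(I,\rho)A_{m,n}^H$, so it takes values in the maximal ideal, and then each product $\nu\cdot\mu_i$ automatically lies in $(\lambda^2)$, matching the ideal constraint on the $c_{1,j}$ of Definition~\ref{hf2}. Using closure of $\cA^H$ under $\rho_n$-coefficient extraction (an easy part of Theorem~\ref{MainThm}), expand $f=\sum_j f_j(\xi,\rho')\rho_n^j$ with $f_j\in A_{m,n-1}^H$. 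The regularity of $f$ then gives $f_j\in(I,\rho')A_{m,n-1}^H$ for $j<s$ and $f_s-1\in(I,\rho')A_{m,n-1}^H$, while for $j>s$ the $f_j$ is unconstrained in $A_{m,n-1}^H$. Matching $\rho_n^j$-coefficients of $f=(1+\nu)\bigl(\rho_n^s+\sum_i\mu_i\rho_n^i\bigr)$, with $\nu=\sum_\ell\nu_\ell\rho_n^\ell$, produces
\begin{align*}
\mu_j&=f_j-\sum_{i=0}^{j}\nu_{j-i}\mu_i\qquad(j<s),\\
\nu_{j-s}&=f_j-\delta_{j,s}-\sum_{i=0}^{s-1}\nu_{j-i}\mu_i\qquad(j\ge s).
\end{align*}
For $j\le s$ these equations are already in henselian form (constants in $(I,\rho')$, bilinears in $(\lambda^2)$). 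For $j>s$ I would split $f_j=b_0+b_1$ with $b_0\in A[\xi,\rho']$ a polynomial lifting $\widetilde{f_j}$ and $b_1\in I\cdot A_{m,n-1}^H\subset(I,\rho,\lambda)$, absorbing the underlying $h$-type representation of $f_j$ into the unknowns of $\Sigma$. The infinite recursion in $\ell$ is handled by truncating at $\rho_n$-degree $k$, solving the finite henselian system over $(\xi,\rho')$ for each $k$ by Lemma~\ref{lemps}, and passing to the limit via uniqueness; this produces the required $\nu\in A_{m,n}^H$ and $\mu_i\in A_{m,n-1}^H$, and uniqueness of the factorisation inherits from uniqueness in Lemma~\ref{lemps}.

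Part (i) admits the same scheme, now with $\xi_m$ of the first kind: introduce a first-kind unknown for $U$ (or $U^{-1}$) and run either the henselian-system argument above, or apply strictly convergent Weierstrass preparation (property (vi) of $\cA$) directly to $g$ after making it regular in $\xi_m$, possibly via an auxiliary second-kind variable in the spirit of the reduction used at the end of the proof of Proposition~\ref{2.2}. The hard parts will be two-fold: first, verifying all the ideal-memberships required by Definition~\ref{hf2}, especially the absorption of the unconstrained $f_j$'s for $j>s$ in part (ii) into a polynomial $b_0$ together with an $I$-part $b_1$ without spoiling henselian form; second, promoting the per-$k$ truncated solutions to a genuine element of $A_{m,n}^H$ rather than merely a formal power series in $\rho_n$, which relies on uniqueness from Lemma~\ref{lemps} together with a compact single-equation reformulation obtained by Weierstrass-dividing $f-(1+\nu)(\rho_n^s+\sum\mu_i\rho_n^i)$ by $\rho_n^s$ at the level of the underlying strictly convergent pre-Weierstrass ring.
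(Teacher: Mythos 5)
There is a genuine gap, and it is precisely the one your own last paragraph flags without resolving. Matching $\rho_n^j$-coefficients of $f=(1+\nu)(\rho_n^s+\sum_i\mu_i\rho_n^i)$ with $\nu=\sum_\ell\nu_\ell\rho_n^\ell$ introduces \emph{infinitely many} unknowns $\nu_0,\nu_1,\nu_2,\dots$. A henselian system in the sense of Definition~\ref{hf2} has finitely many unknowns $(\eta,\lambda)$, so the infinite triangular recursion you write down is \emph{not} a henselian system, and Lemma~\ref{lemps} does not apply to it. Truncating at $\rho_n$-degree $k$ and ``passing to the limit via uniqueness'' only produces a formal power series in $\rho_n$ over $A^H_{m,n-1}$; it does not exhibit $\nu$ as $g'\circ h_{\Sigma'}$ for a single finite henselian system $\Sigma'$, which is what membership in $A^H_{m,n}$ requires. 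The fallback you propose (Weierstrass-divide $f-(1+\nu)(\rho_n^s+\sum\mu_i\rho_n^i)$ by $\rho_n^s$ in the underlying strictly convergent ring) does not close this, because $\nu$ would still have to enter as a symbol carrying an unbounded $\rho_n$-expansion; dividing by $\rho_n^s$ gives $s$ remainder conditions but setting the quotient to zero is again an infinite family of conditions. The same difficulty recurs in part (i): your ``first-kind unknown for $U$'' is not a single unknown, since $U$ genuinely depends on $\xi_m$.

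The paper sidesteps all of this with a different idea (the Artin trick), which never puts $U$ into the unknowns at all. Using Proposition~\ref{CH}, write $f=F_0(\xi,\rho)+F_1(\xi,\rho,g,h)$ where $(g,h)$ solves a \emph{finite} henselian system $\Sigma$. Introduce finitely many new unknowns: $a_0,\dots,a_{s-1}$ for the Weierstrass polynomial $a=\xi_m^s+\sum_{k<s}a_k\xi_m^k$ (resp.~$\rho_n^s+\sum a_k\rho_n^k$), and $g^*_{ik},h^*_{jk}$ ($k<s$) for the coefficients of the degree-$<s$ remainders of the $g_i,h_j$ modulo $a$. Then \emph{strictly convergent} Weierstrass division of $F_0+F_1(\xi,\rho,g^*,h^*)$ and of the defining equations of $\Sigma$ by $a$ — division is available by condition (vi) of Definition~\ref{PWS} plus Section~\ref{WPWD} — produces, after equating coefficients of $\xi_m^k$ (resp.~$\rho_n^k$) for $k<s$, a \emph{finite} henselian system for the $a_k$, $g^*_{ik}$, $h^*_{jk}$ over $(\xi',\rho)$ (resp.~$(\xi,\rho')$). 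Lemma~\ref{lemps} and Lemma~\ref{IFT}(ii) then directly give the $f_k=a_k$ in $A^H_{m-1,n}$ (resp.~$A^H_{m,n-1}$). The unit $U$ is only produced afterward, again as the solution of a further \emph{finite} henselian system obtained by substituting $\eta_i=\Theta_i\cdot a+\sum_k g^*_{ik}\xi_m^k$ etc.\ and dividing by $a$. The finiteness at every stage is not an incidental convenience — it is exactly what makes the conclusion ``$f_k\in A^H_{m-1,n}$'' a statement rather than a wish. Your scheme has no substitute for this, so the proof as proposed does not go through.
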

\begin{proof}
We prove (i).  The proof of (ii) is similar. Let $f(\xi,\rho) \in  A^H_{m,n}$ be regular in $\xi_m$ of degree $s$.
 By Proposition \ref{CH}, we may write
 \begin{equation}\label{e1}
 f = F_0(\xi, \rho) + F_1(\xi, \rho, g_1, \cdots, g_M, h_1,\cdots,h_{N})
\end{equation}
where $F_0 \in A[\xi,\rho],$
$F_1(\xi,\rho,\eta,\lambda) \in (I, \rho, \lambda)A_{m+M,n+N}$, and the $g_i, h_j$ are defined by a henselian system $\Sigma(\eta,\lambda)$:
\begin{equation} \label{e32}
\begin{split}
\eta_i &= b_{0,i}(\xi,\rho) + b_{1,i}(\xi,\rho,\eta,\lambda) \quad i=1,\cdots,M \\
\lambda_j &= c_{0,j}(\xi,\rho) + c_{1,j}(\xi,\rho,\eta,\lambda) \quad j=1,\cdots,N
\end{split}
\end{equation}
where the
\begin{eqnarray*}
&b_{0,i} \in A[\xi,\rho], &b_{1,i} \in (I,\rho,\lambda)A_{m+M,n+N} \text{ \quad and the} \\
&c_{0,j} \in (\rho)A[\xi,\rho], &c_{1,j} \in (I,\rho,\lambda^2)A_{m+M,n+N}.
\end{eqnarray*}
Let $\xi' = (\xi_1, \cdots , \xi_{m-1})$.

We follow the outline of the proof of \cite{Ar}.
Let
$a_0, \cdots , a_{s-1}$
be new variables and let
$$
a:= \xi_m^s + a_{s-1}\xi_m^{s-1} + \cdots + a_0.
$$
Let $g^*_{i,k}, h^*_{j,k}$ be new variables,
$i = 1,\cdots,M; j= 1, \cdots ,N; k=0,\cdots,s-1$, and let
\begin{eqnarray*}
g_i^* := \sum_{k=0}^{s-1} g^*_{i,k}\xi_m^j  \quad \text{ and } \quad h_j^* := \sum_{k=0}^{s-1} h^*_{j,k}\xi_m^j.
\end{eqnarray*}

We seek a henselian system that will determine  $a_0, \cdots , a_{s-1}$ as the Weierstrass data of $f$ and
the $g^*_{i,k}$, (respectively $h^*_{j,k}$) as the Weierstrass data of the remainder on dividing $g_i$ (respectively $h_j$) by $f$.

In equation (\ref{e1}), setting $f = 0$ and replacing the $g_i$ by $g_i^*$ and the $h_j$ by $h_j^*$
(i.e. working modulo $a$) we get
 \begin{equation}\label{e2}
 0 = F_0 + F_1(\xi', \rho, g_1^*, \cdots, g_M^*, h_1^*,\cdots,h_{N}^*).
 \end{equation}
By strictly convergent Weierstrass division by $a$, which is (strictly convergent) regular in $\xi_m$ of degree $s$ we get equations
 \begin{equation}\label{e3}
 0 = \sum_{k=0}^{s-1} H_k(\xi', \rho, \{a_k\}, \{g_{ik}^*\}, \{h_{jk}^*\})\xi_m^j,
 \end{equation}
where we have written $\{a_k\}$ to denote the variables $a_0,\cdots,a_{s-1}$, and so on.
From the fact that $f$ (and hence also $F_0 = F_0(\xi,\r)$) is regular of degree $s$ in $\xi_m$ and that modulo $a$ we have $\xi_m^s = - a_{s-1}\xi_m^{s-1} - \cdots - a_0$, we see directly, equating the coefficients of $\xi_m^i$, for  $i= 0, \dots, s-1$, to $0$ that equation (\ref{e3}) can be rewritten as a henselian system determining $a_0, \cdots, a_{s-1}$ in terms of the other variables $\xi',\rho, \{g_{ik}^*\}, \{h_{jk}^*\}$.

Let $\Sigma(\eta,\lambda)$ be the henselian system determining the $g_i$ and $h_j$.  Divide the equations $\Sigma(\xi, \rho, g_1^*, \cdots, g_M^*, h_1^*,\cdots,h_{N}^*)$
(i.e. the system obtained from $\Sigma(\eta,\lambda)$ by replacing $\eta_i$ by $\sum_k g_{ik}^*\xi_m^k$ and $\lambda_j$ by $\sum_kh_{jk}^*\xi_m^k$)  by $a$
to get a system of the form
\begin{equation}\label{e4}
\begin{split}
g_{i}^* &=\sum_{k=0}^{s-1} G_{ik}(\xi',\rho,\{a_k\}, \{g_{ik}^*\}, \{h_{jk}^*\})\xi_m^k \quad i=1,\cdots,M\\
h_j^* &= \sum_{k=0}^{s-1} H_{jk}(\xi',\rho,\{a_k\}, \{g_{ik}^*\}, \{h_{jk}^*\})\xi_m^k \quad j=1,\cdots,N.
\end{split}
\end{equation}
Equating the coefficients of like powers of $\xi_m$ this yields a henselian system $\Sigma^*$ of the form
\begin{equation}\label{e5}
\begin{split}
g_{ik}^* &= G_{ik}(\xi',\rho,\{a_k\}, \{g_{ik}^*\}, \{h_{jk}^*\}) \quad i=1,\cdots,M\\
h_{jk}^* &= H_{jk}(\xi',\rho,\{a_k\}, \{g_{ik}^*\}, \{h_{jk}^*\}) \quad j=1,\cdots,N.
\end{split}
\end{equation}

Equations  \ref{e5}, and the above described henselian system for $a_0, \cdots , a_{s-1}$ together form the required henselian system for the $ g_{ik}$ and $h_{jk}$, the Weierstrass data of $f$ and the remainders on dividing the $g_i$ and $h_j$ by $f$.  Lemma \ref{IFT}(ii)  shows that these two henselian systems can be combined in one system determining all the variables $a_0, \cdots , a_{s-1},  g^*_{ik}$ and $h^*_{jk}$.

Finally we must show that $U \in A_{m,n}^H$.  Let the $\Theta_i, \Theta'_j$ be new variables, and write
\begin{eqnarray*}\label{e6}
\eta_0 &:=& \Theta_0 \cdot a\\
\eta_i &:=&  \Theta_i \cdot a + \Sum_k g_{ik}^* \xi_m^k \\
\lambda_j &:=&  \Theta'_j \cdot a + \Sum_k h_{jk}^* \xi_m^k .
\end{eqnarray*}
Making this substitution in equations (\ref{e1}) and (\ref{e32}),  doing Weierstrass division by $a$ and using equations (\ref{e5}) and (\ref{e6}) yields equations of the form
\begin{eqnarray*}\label{e7}
a \cdot \Theta_0 &=& a \cdot G_0(\xi, \rho, \{\Theta^*\}, \{a_k\}, \{g_{ik}^*\}, \{h_{jk}^*\})\\
a\cdot \Theta_i  &=&  a \cdot G_i(\xi, \rho, \{\Theta^*\}, \{a_k\}, \{g_{ik}^*\}, \{h_{jk}^*\})\\
a \cdot \Theta'_j &=&   a \cdot G'_j(\xi, \rho, \{\Theta^*\}, \{a_k\}, \{g_{ik}^*\}, \{h_{jk}^*\}),
\end{eqnarray*}
where $\Theta^* = ( \Theta_i, \Theta'_j)$.  The required system of henselian equations for $\Theta_0$ and the $\Theta^*$ over the other variables is then
\begin{eqnarray*}\label{e8}
\Theta_0  &=&  G_0(\xi, \rho, \{\Theta^*\}, \{a_k\}, \{g_{ik}^*\}, \{h_{jk}^*\})\\
\Theta_i  &=&   G_i(\xi, \rho, \{\Theta^*\}, \{a_k\}, \{g_{ik}^*\}, \{h_{jk}^*\})\\
\Theta'_j  &=&   G'_j(\xi, \rho, \{\Theta^*\}, \{a_k\}, \{g_{ik}^*\}, \{h_{jk}^*\}),
\end{eqnarray*}
and we can again use Lemma \ref{IFT}(ii) to combine the systems.
\end{proof}

\begin{rem}\label{div}
It follows from Weierstrass Division that if $f \in A_{m,n}^H$ and we have $f(\xi',0,\rho) = 0$ then there is a
 $g \in A_{m,n}^H$ such that $f = \xi_m \cdot g$.  (And similarly w.r.t. the other variables).
\end{rem}

With a suitable definition of ``regular'', the analogue of Proposition \ref{Wprep} holds for the terms $k_\Sigma$. (Of course uniqueness holds only at the level of $k_\Sigma^\sigma$).

In order to be able to extend any $\cA$-structure on any field $K$ to a $\cA^H$-structure on $K$, we introduce the following condition of goodness.

\begin{defn}\label{good}Let $\cA$ be a strictly convergent pre-Weierstrass system. We call $\cA$ a {\it good pre-Weierstrass system} if for every system $\Sigma$ of henselian equations we have that if $F \in A_{m+M,n+N}$ and $F\circ h_\Sigma = 0$ (as a power series) then $F \in I_\Sigma \cdot ({A_{m+M,n+N}})_{1 + (I, \rho, \lambda)}$.
\end{defn}

\begin{lem}\label{recap0} Let $\cA$ be a good strictly convergent pre-Weierstrass system, and let $\sigma$ be an analytic $\cA$-structure on a Henselian valued field $K$.
Then there exists a unique separated $\cA^H$-structure $\sigma^H$ on $K$ which extends $\sigma$ (and which often will be also denoted by $\sigma$).
Precisely, if a series $f\in A_{m,n}^H$ is a composition $f=g(\xi,\rho,h_\Sigma(\xi,\rho))$ as in Definition \ref{compseries},
then $\sigma^H(f)$ is the function from $(K^{\circ})^{m+M}\times (K^{\circ\circ})^{n+N}$ to $K$ given by
$$
F^\sigma \circ k_\Sigma^\sigma,
$$
with notation from Lemma-Definition \ref{hf2bis}.
\end{lem}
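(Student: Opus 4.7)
The plan is to define $\sigma^H(f)$ by the formula in the statement, show it is independent of the choice of representation $f=g(\xi,\rho,h_\Sigma)$, and then verify that the collection $\sigma^H=\{\sigma^H_{m,n}\}$ satisfies the axioms of a separated analytic $\cA^H$-structure, with uniqueness being an easy consequence.

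First, given $f\in A^H_{m,n}$ written as $f=g(\xi,\rho,h_\Sigma(\xi,\rho))$ with $g\in A_{m+M,n+N}$ and $\Sigma$ a henselian system (which exists by definition of $A_{m,n}^H$ and by Proposition \ref{CH}), I define
\[
\sigma^H_{m,n}(f)\colon (\xi,\rho)\longmapsto g^\sigma\bigl(\xi,\rho,k_\Sigma^\sigma(\xi,\rho)\bigr)
\]
on $(K^\circ)^m\times(K^{\circ\circ})^n$. Lemma-Definition \ref{hf2bis} guarantees that $k_\Sigma^\sigma$ is a well-defined function into $(K^\circ)^M\times(K^{\circ\circ})^N$, so the right-hand side makes sense.

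The main obstacle is well-definedness: if $f=g(\xi,\rho,h_\Sigma)=g'(\xi,\rho,h_{\Sigma'})$ for two different henselian systems $\Sigma,\Sigma'$, then the two candidate definitions must agree as functions on $K$. This is precisely where the goodness hypothesis enters. Form the joint henselian system $\Sigma\cup\Sigma'$ (in disjoint auxiliary variables); its unique power-series solution is $(h_\Sigma,h_{\Sigma'})$. Viewed as an element of $A_{m+M+M',n+N+N'}$, the power series $F:=g-g'$ vanishes on this joint solution, so $F\circ h_{\Sigma\cup\Sigma'}=0$. By goodness we may write $F=\sum_k \alpha_k\beta_k^{-1}$ with $\alpha_k\in I_{\Sigma\cup\Sigma'}$ and $\beta_k\in 1+(I,\rho,\lambda)$. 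Now for $(\xi,\rho)\in(K^\circ)^m\times(K^{\circ\circ})^n$, each $\alpha_k^\sigma$ vanishes at $(\xi,\rho,k_{\Sigma\cup\Sigma'}^\sigma(\xi,\rho))$ by the defining equations of the joint system, while $\beta_k^\sigma$ evaluates to an element of $1+K^{\circ\circ}$, hence a unit in $K^\circ$. Therefore $F^\sigma$ composed with $k_{\Sigma\cup\Sigma'}^\sigma$ is identically zero, which is exactly the equality $g^\sigma\circ k_\Sigma^\sigma=(g')^\sigma\circ k_{\Sigma'}^\sigma$.

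Given well-definedness, the remaining properties are routine. Ring-homomorphism: if $f_1=g_1(\xi,\rho,h_{\Sigma_1})$ and $f_2=g_2(\xi,\rho,h_{\Sigma_2})$, combine the systems into $\Sigma_1\cup\Sigma_2$ to represent $f_1+f_2$ and $f_1f_2$ by $g_1+g_2$ and $g_1g_2$ respectively; since $\sigma$ is a ring homomorphism on $A_{m+M_1+M_2,n+N_1+N_2}$, the defined map respects the operations. Extension of $\sigma$: an element $f\in A_m\subset A_{m,0}^H$ is represented by the empty henselian system, so $\sigma^H(f)=\sigma_m(f)$. Conditions (1)--(3) in Definition \ref{AS} for $\sigma^H$ (in its separated version) reduce to the corresponding conditions for $\sigma$ together with the fact that $k_\Sigma^\sigma$ lands in $(K^\circ)^M\times(K^{\circ\circ})^N$ and is compatible under inclusions of variable blocks. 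Finally, for uniqueness, any $\cA^H$-structure $\tau$ on $K$ extending $\sigma$ must send the tuple $h_\Sigma$ to a tuple satisfying the equations of $\Sigma$ with the correct reduction, and Lemma-Definition \ref{hf2bis} forces $\tau(h_\Sigma)=k_\Sigma^\sigma$; combined with $\tau(g)=g^\sigma$ for $g\in A_{m+M,n+N}$, this gives $\tau(f)=\sigma^H(f)$ for every representation of $f$.
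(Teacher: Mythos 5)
Your proof is correct and takes essentially the same approach as the paper: the paper's one-line argument also reduces well-definedness to the statement that $g\circ h_\Sigma=0$ forces $g^\sigma\circ k_\Sigma^\sigma=0$, using goodness to produce a unit $u\in 1+(I,\rho,\lambda)$ with $u\cdot g\in I_\Sigma$, just as you do (your $\sum_k\alpha_k\beta_k^{-1}$ phrasing is loose but equivalent to clearing a single denominator). You spell out the reduction from two representations to the vanishing case via the joint henselian system, which the paper leaves implicit, and you add the routine verification of the homomorphism and uniqueness claims.
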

\begin{proof}
Lemma-Definition \ref{hf2bis} and Definition \ref{good} guarantee that the separated $\cA^H$-structure $\sigma^H$ on $K$ is well defined, that is, if  $g\circ h_\Sigma = 0$ then there is a unit $u \in {A_{m+M,n+N}}_{1 + (I, \rho, \lambda)}$ such that $u\cdot g \in I_\Sigma$ and hence $g^\sigma \circ k_\Sigma^\sigma = 0$.
\end{proof}

The following result will be useful in sections \ref{ex1a} and \ref{ex2a} .

\begin{prop}[Criterion]\label{complete}
Assume strictly convergent pre-Weierstrass system $\cA= \{A_{m}\}$ has the  property that
there is a separated Weierstrass system $\cB= \{B_{m,n}\}$ such that for all $(m,n)$ we have $A_{m,n} \subset B_{m,n}$ and $B_{m,n}$ is faithfully flat over $(A_{m,n})_{1 + (\rho)A_{m+n}}$.  Then
\item[(i)] $\cA$ is a good pre-Weierstrass system, and
\item[(ii)] $\cA^H$ is a separated Weierstrass system.
\end{prop}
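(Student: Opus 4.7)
The plan is to use the given separated Weierstrass system $\cB$ as a bridge, exploiting the faithful flatness hypothesis to descend information from $\cB$ back to $\cA$ and $\cA^H$. Part (i) will be essentially an exercise in Weierstrass division inside $\cB$ combined with a standard flatness argument; part (ii) will then follow by observing that $\cA^H$ sits inside $\cB$ and by Theorem \ref{MainThm} already has the pre-Weierstrass axioms, so only the additional ``strong noetherian'' type axioms of a full Weierstrass system need to be transferred.

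For (i), I would first show that $h_\Sigma \in B_{m,n}^{M+N}$. The equations of $\Sigma$, viewed in $B_{m+M,n+N}$, define elements that are separated-regular of degree $1$ in $\eta_i$ and in $\lambda_j$ respectively (cf.~Definition \ref{regular}), so applying the implicit function theorem for $\cB$ (the analogue of Lemma \ref{IFT}(ii), valid because $\cB$ is a separated Weierstrass system) produces a unique solution in $B_{m,n}^{M+N}$; by the uniqueness clause of Lemma \ref{lemps} this must coincide with $h_\Sigma$. Next, iterate Weierstrass division in $\cB$ by the $M+N$ regular generators of $I_\Sigma$: every $F \in A_{m+M,n+N} \subset B_{m+M,n+N}$ decomposes as $F = R + G$ with $G \in I_\Sigma\cdot B_{m+M,n+N}$ and $R \in B_{m,n}$ (the remainder after dividing out both kinds of variables). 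Substituting $(\eta,\lambda) = h_\Sigma$ and using that $F\circ h_\Sigma = 0$ forces $R = 0$, whence $F \in I_\Sigma \cdot B_{m+M,n+N}$.

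Now apply faithful flatness. Writing $A' := (A_{m+M,n+N})_{1+(\rho,\lambda)}$, the hypothesis says $B_{m+M,n+N}$ is faithfully flat over $A'$, and the standard consequence $(I_\Sigma \cdot B_{m+M,n+N}) \cap A' = I_\Sigma \cdot A'$ together with $F \in A_{m+M,n+N} \subset A'$ gives $F \in I_\Sigma \cdot A'$. Since $A' \hookrightarrow (A_{m+M,n+N})_{1+(I,\rho,\lambda)}$, this is exactly the goodness condition from Definition \ref{good}.

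For (ii), observe first that $A_{m,n}^H \subset B_{m,n}$: by the argument of (i)'s first step, $h_\Sigma \in B_{m,n}^{M+N}$, and composition inside $\cB$ shows $g(\xi,\rho,h_\Sigma)\in B_{m,n}$ for any $g\in A_{m+M,n+N}\subset B_{m+M,n+N}$. Theorem \ref{MainThm} already supplies the separated pre-Weierstrass axioms for $\cA^H$, so what remains is the strong noetherian property (and any attendant closure conditions) making it a full Weierstrass system. For $f \in A_{m,n}^H$ its coefficient expansion is intrinsic, so the strong noetherian decomposition provided by $\cB$ has the form $f = \sum_{(\mu,\nu) \in J} a_{\mu,\nu}\xi^\mu\rho^\nu(1+G_{\mu,\nu})$ with $a_{\mu,\nu}\in A$ and $G_{\mu,\nu}\in B_{m,n}$, and one upgrades the witnesses $G_{\mu,\nu}$ to $A_{m,n}^H$ by combining goodness (just established in (i)) with faithful flatness along the inclusion $A_{m,n}^H \hookrightarrow B_{m,n}$, exactly as in part (i)'s descent. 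The main obstacle is this last descent step: unlike (i), there is no obvious ideal-theoretic statement to contract, and the argument must exploit the uniqueness up to units in $1 + (I,\rho)$ of the strong noetherian decomposition so that the $G_{\mu,\nu}$ are determined modulo something controlled by $\cA^H$ itself, allowing faithful flatness to pull them back into $A_{m,n}^H$.
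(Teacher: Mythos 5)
Part (i) of your proposal is essentially the paper's argument, spelled out in more detail. You give a hands-on Weierstrass-division proof that $F \circ h_\Sigma = 0$ in $B$ forces $F \in I_\Sigma \cdot B_{m+M,n+N}$, and then contract along the faithfully flat inclusion to land in $I_\Sigma \cdot A'$. The paper compresses this into a citation of Matsumura Theorem 7.5 (the equality $I_\Sigma\cdot B \cap A' = I_\Sigma\cdot A'$ for faithfully flat $A' \to B$) and then states the conclusion, but the substance is the same.

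Part (ii) has a genuine gap. The hypothesis gives faithful flatness of $B_{m,n}$ over the localization $(A_{m,n})_{1+(\rho)A_{m,n}}$; it says nothing about $B_{m,n}$ being faithfully flat over $A_{m,n}^H$, and indeed one has no a priori control of that extension. So the closing move --- ``faithful flatness along the inclusion $A_{m,n}^H \hookrightarrow B_{m,n}$, exactly as in part (i)'s descent'' --- invokes a property you do not have, and the remark about ``uniqueness up to units'' of the strong noetherian decomposition does not supply a substitute: there is no ideal to contract, and uniqueness alone does not pull a witness from $B$ down to $\cA^H$. The mechanism the paper uses is different: unroll every composition back to the base rings. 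Write $f = F\circ h_\Sigma$ and each Taylor coefficient $\overline f_{\mu\nu} = F_{\mu\nu}\circ h_{\Sigma_{\mu\nu}}$ with all the $F$'s in some $A_{p,q}$, let $\Sigma'$ be the conjunction of $\Sigma$ and the $\Sigma_{\mu\nu}$, and express property (vii)$'$ as a \emph{linear} equation over $A_{p,q}$ modulo $I_{\Sigma'}$, in which the unknowns encode the putative units $1+g_{\mu\nu}$ (splitting $g_{\mu\nu}$ as $\sum_i c_i g_{\mu\nu i} + \sum_j \rho_j g'_{\mu\nu j}$ with $c_i \in I$). This linear system has a solution in $B_{p,q}$ by the strong noetherian property of $\cB$; by faithful flatness of $B_{p,q}$ over $(A_{p,q})_{1+(\rho)A_{p,q}}$ --- which \emph{is} a hypothesis --- it has a solution in $(A_{p,q})_{1+(\rho)A_{p,q}}$. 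Substituting $h_{\Sigma'}$ for the auxiliary variables then produces the required decomposition inside $\cA^H$. The essential point you are missing is this reduction to the base level $A_{p,q}$, where the flatness hypothesis actually applies, together with the observation that (vii)$'$ can be cast as solvability of a linear system so that faithful flatness gives descent of solutions, not merely contraction of ideals.
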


\begin{proof}
(i) From \cite{Mat} Theorem 7.5 it follows that $I_\Sigma \cdot (A_{m,n})_{1 + (\rho)A_{m,n}} =  (I_\Sigma \cdot B_{m+M,n+N}) \cap (A_{m,n})_{1 + (\rho)A_{m,n}}$.   Hence if $F \circ h_\Sigma = 0$ in $B_{m,n}$ then $F \in I_\Sigma \cdot (A_{m,n})_{1 + (\rho)A_{m,n}}$.

For (ii) observe that property (vii)$'$ of Definition \ref{SNPstr'} (from which property (vii) follows) can be expressed as a linear equation over $A_{m,n}^H$ which has a solution in $B_{m,n}$.  We can capture the condition $g_{\mu\nu} \in B_{m,n}^\circ$
by writing $g_{\mu\nu} = \sum_i c_i g_{\mu\nu i} + \sum_j \rho_j g'_{\mu\nu j}$ for suitable $c_i \in I$,
and the $g_{\mu\nu i},g'_{\mu\nu j} \in B_{m,n}$. Let $f = F\circ h_\Sigma$ and
$\overline f_{\mu\nu} = F_{\mu\nu} \circ h_{\Sigma_{\mu\nu}}$ with $F, F_{\mu\nu} \in A_{p,q}$ for suitable $p,q$.
Let $\Sigma'$ be the conjunction of the henselian systems $\Sigma$ and the
$\Sigma_{\mu\nu}$.  Then the linear equation
\begin{equation}
F =    \sum_{(\mu,\nu) \in J}^{} F_{\mu\nu}(\xi'')^\mu(\rho'')^\nu\big(1 +  \Sum_i c_i g_{\mu\nu i} + \Sum_j\rho_jg'_{\mu\nu j}\big) \mod (I_{\Sigma'})
\end{equation}
has coefficients from $A_{p,q}$ and has a solution in $B_{p,q}$, for suitable $(p,q)$, and hence in $(A_{p,q})_{1 + (\rho^*)A_{p,q}}$.
Substituting $h_{\Sigma'}$ for the appropriate variables gives us (vii)$'$.
\end{proof}

Lemma \ref{hf2bis} yields the following corollary.
\begin{cor}\label{sim} Let $\cA$ be a good pre-Weierstrass system and let $f \in \bigcup_{m,n} A_{m,n}^H$. Let $\sigma$ be an analytic $\cA$--structure on a Henselian valued field $K$. Write $\sigma$ for the $\cA^H$-structure on $K$ given by Lemma \ref{recap0}.
The graph of $f^\sigma$ is existentially $\cL_\cA$-definable. Moreover, the  existential formula defining the graph of $f^\sigma$ can be taken to be independent of the analytic structure $\sigma$.
\end{cor}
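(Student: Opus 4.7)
The plan is to reduce the graph of $f^\sigma$ to the graph of the henselian witnessing function $k_\Sigma^\sigma$ plus one evaluation of an ordinary function symbol of $\cL_\cA$, both of which are already known to be definable by formulas independent of $\sigma$.

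First, I would unpack $f$ using Definition \ref{hf22} and Proposition \ref{CH}: there exist integers $M,N$, a power series $g \in A_{m+M,n+N}$, and a \emph{polynomial} henselian system $\Sigma$ in unknowns $(\eta,\lambda)$ over $(\xi,\rho)$ such that
$$
f(\xi,\rho) \;=\; g\bigl(\xi,\rho,h_{\Sigma}(\xi,\rho)\bigr).
$$
By Lemma \ref{recap0}, the interpretation on $(\Ko)^m\times (\Koo)^n$ is then
$$
f^{\sigma}(\xi,\rho) \;=\; g^{\sigma}\bigl(\xi,\rho,k_{\Sigma}^{\sigma}(\xi,\rho)\bigr),
$$
where $k_{\Sigma}^{\sigma}$ is the henselian witnessing tuple provided by Lemma-Definition \ref{hf2bis}.

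Second, I would invoke Lemma-Definition \ref{hf2bis} to obtain a quantifier-free $\cL_\cA$-formula $\psi_{\Sigma}(\xi,\rho,\eta,\lambda)$ which, in every Henselian field with analytic $\cA$-structure, defines the graph of $k_\Sigma^\sigma$ on $(\Ko)^m\times(\Koo)^n$; crucially, $\psi_{\Sigma}$ is produced once and for all from $\Sigma$ and does not depend on $\sigma$. Since $g$ lies in $A_{m+M+n+N}$ it is itself a function symbol of $\cL_\cA$, whose interpretation is $g^{\sigma}$ on the valuation ring and zero elsewhere. Combining these, on $(\Ko)^m\times(\Koo)^n$ the graph of $f^\sigma$ is defined by the existential $\cL_\cA$-formula
$$
\phi_f(\xi,\rho,y)\;:\;\exists\,\eta\,\exists\,\lambda\;\bigl(\psi_{\Sigma}(\xi,\rho,\eta,\lambda)\wedge y = g(\xi,\rho,\eta,\lambda)\bigr),
$$
where membership of $\eta$ in $(\Ko)^M$ and of $\lambda$ in $(\Koo)^N$ is forced by $\psi_{\Sigma}$ (or can be conjoined using the divisibility predicate $\mid$, which is quantifier-free in $\cL$). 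Behavior off the natural domain is handled by the standard ``extension by zero'' convention, which adds a quantifier-free disjunct $\bigl(\neg((\xi,\rho)\in (\Ko)^m\times(\Koo)^n)\wedge y=0\bigr)$; this preserves existentiality.

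Finally, I would observe that both ingredients $\psi_\Sigma$ and $g$ are fixed pieces of syntax depending only on $f$ (not on $\sigma$), so the formula $\phi_f$ satisfies the independence clause in the statement. The only non-routine step is the appeal to Lemma-Definition \ref{hf2bis} for uniform quantifier-free definability of the graph of $k_\Sigma^\sigma$, but that work has already been done; here the argument is simply bookkeeping on top of Proposition \ref{CH} and Lemma \ref{recap0}.
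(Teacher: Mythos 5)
Your proof is correct and is essentially the argument the paper has in mind when it says "Lemma \ref{hf2bis} yields the following corollary": write $f=g(\xi,\rho,h_\Sigma(\xi,\rho))$, observe via Lemma \ref{recap0} that $f^\sigma = g^\sigma\circ k_\Sigma^\sigma$, and then use the $\sigma$-independent quantifier-free definition of the graph of $k_\Sigma^\sigma$ from Lemma-Definition \ref{hf2bis} to produce the existential formula by conjoining $y=g(\xi,\rho,\eta,\lambda)$ and quantifying over $(\eta,\lambda)$. (The invocation of Proposition \ref{CH} to make $\Sigma$ polynomial is harmless but not actually needed here, since Lemma \ref{hf2bis} already covers arbitrary henselian systems.)
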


Now we come to  two other main results in the following two subsections, which cover and extend a number of examples considered in previous papers  \cite{vdD}, \cite{LR2}, \cite{LR3}, \cite{CLR1}, \cite{CL1}. In subsection \ref{ex}, we introduce a new general concept of analytic structures and provide some further examples, complementing theorems \ref{ex1} and \ref{ex3}.
In Section \ref{Cex} we show that the extension of $\cA$ to $\cA^H$ is necessary for quantifier elimination.

\subsection{The case of $K$ a complete field and $A_m = T_m(K)^\circ$}\label{ex1a}
In this subsection we discuss the ``classical'' example.
Let $K$ be a complete, rank one  valued field (not necessarily algebraically closed) and let $A_m := \Ko\langle\xi\rangle = T_m(K)^\circ,$ the ring of strictly convergent power series over $\Ko$ and let $\cA:=\{A_m\}$.  This is certainly a pre-Weierstrass system.  Any complete, rank one  field $F \supset K$ has analytic $\cA$--structure. In this case, $A = \Ko$ and $I = \Koo$.

\begin{thm} \label{ex1}
With the above notation, $\cA$ is a good pre-Weierstrass System and $\cA^H := \{A_{m,n}^H\}$ is a separated Weierstrass System.
\end{thm}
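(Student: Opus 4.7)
The plan is to apply the criterion provided by Proposition \ref{complete}. Concretely, I would take as the auxiliary separated Weierstrass system $\cB = \{B_{m,n}\}$ the Lipshitz--Robinson system $B_{m,n} := S_{m,n}(K)^\circ$ of separated power series over $K^\circ$ in variables $\xi = (\xi_1,\ldots,\xi_m)$ of the first kind and $\rho = (\rho_1,\ldots,\rho_n)$ of the second kind. That $\cB$ is a separated Weierstrass system in the sense of \cite{CL1} is established in the Lipshitz--Robinson papers (e.g.\ \cite{LL1}, \cite{LR1}); in particular it satisfies Weierstrass preparation and division in both kinds of variables, and the strong Noetherian property. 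Granted the criterion, both conclusions (good pre-Weierstrass system for $\cA$, and $\cA^H$ being a separated Weierstrass system) follow at once.

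The inclusion $A_{m,n} = A_{m+n} = T_{m+n}(K)^\circ \subset S_{m,n}(K)^\circ = B_{m,n}$ is immediate from the definitions: any series with coefficients in $K^\circ$ tending to zero (the strictly convergent condition) converges on the bidisc $(K^\circ)^m \times (K^{\circ\circ})^n$ and so lies in the separated ring. This takes care of the containment hypothesis of Proposition \ref{complete}.

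The main obstacle is the faithful flatness of $B_{m,n}$ over the localization $(A_{m,n})_{1+(\rho)A_{m+n}}$. I would invoke the corresponding result from the theory of separated power series rings developed by Lipshitz and Robinson, where it is shown that $S_{m,n}(K)^\circ$ is faithfully flat over this localization (the point being that $T_{m+n}(K)^\circ$ itself is \emph{not} flat over $S_{m,n}(K)^\circ$'s subring of strictly convergent series in a useful sense, but after inverting the units $1+(\rho)$, which are already units in $B_{m,n}$ since the $\rho_j$ live in the maximal ideal, the flatness becomes available via the Weierstrass division machinery in both rings). Faithfulness follows from the fact that both rings have the same residue ring $\widetilde A[\xi,\rho]$ modulo the maximal-ideal data.

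Once faithful flatness is in hand, Proposition \ref{complete}(i) gives that $\cA$ is a good pre-Weierstrass system, so Lemma \ref{recap0} allows any $\cA$--structure on a henselian valued field to be canonically extended to $\cA^H$; and Proposition \ref{complete}(ii) gives that $\cA^H = \{A_{m,n}^H\}$ is a full separated Weierstrass system (not merely pre-Weierstrass), which together yield Theorem \ref{ex1}. The only genuinely non-formal ingredient in the argument is the faithful flatness statement, which is the step I would have to locate or re-prove carefully in the Lipshitz--Robinson framework before assembling the proof.
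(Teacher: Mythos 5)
Your plan is exactly the paper's proof: apply Proposition \ref{complete} with $B_{m,n}=S_{m,n}(K)^\circ$, which reduces everything to the faithful flatness of $S_{m,n}(K)^\circ$ over $(T_{m+n}(K)^\circ)_{1+(\rho)T_{m+n}(K)^\circ}$; the precise reference you are missing is Lemma~3.1 of \cite{R}. One small correction to your aside: your claimed reason for faithfulness is wrong, since the residue rings do \emph{not} coincide --- $\widetilde{T_{m+n}(K)^\circ}=\widetilde K[\xi,\rho]$ whereas $\widetilde{S_{m,n}(K)^\circ}=\widetilde K[\xi][[\rho]]$ --- but since you explicitly defer the faithful flatness to the literature rather than to that heuristic, this does not affect the validity of the overall argument.
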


\begin{proof}[Proof of Theorem \ref{ex1}]  The proof is immediate from Proposition \ref{complete}, taking $B_{m,n} = S_{m,n}(K)^\circ$, and using \cite{R} Lemma 3.1 which establishes that $S_{m,n}(K)^\circ$ is faithfully flat over $(T_{m+n}^\circ)_{1+(\rho)T_{m+n}^\circ}$.

\end{proof}

In \cite{LR2} we showed that for $K$  rank one and complete there is a separated Weierstrass system $\cE$ extending $\cA$ all of whose additional functions are uniformly existentially definable in every {\it algebraically closed} field with analytic $\cA$--structure.  We did not make explicit what the additional functions are, nor did the uniform definitions work in non-algebraically closed fields with analytic $\cA$--structure.

In this section we do not assume $K$ to be algebraically closed, nor do we restrict consideration to  algebraically closed fields with analytic $\cA$-structure.
In this larger generality we obtain quantifier elimination in the forms of Theorems \ref{QEL} and \ref{QELrem}, and we make explicit what the additional existentially definable functions are, namely they are defined by terms of
$\cL_{\cA^{H}}$ (or even $\cL_\cA^{h}$),
on {\it all} fields with analytic $\cA$--structure,
and the graphs of the functions $h_\Sigma$ are indeed quantifier-free definable in all such fields.  We also give a result that links definable functions to terms in Theorem \ref{thens}. (The corresponding results in a more abstract framework are bundled in Theorem \ref{QEL3} below).
Similar results hold with $\cL^D$ in place of $\cL$, see Remark \ref{AQE}.

In the following result, $\cL_{\cA^H}$ is the valued field language $\cL$ with function symbols for the elements of the $A^H_{m,n}$ adjoined for all $m$ and $n$.
\begin{thm}\label{QEL}
Let $L$ be any algebraically closed valued field with strictly convergent analytic $\cA$-structure. Then $L$ allows quantifier elimination in both $\cL_{\cA^H}$ and $\cL_\cA^{h}$ (and thus quantifier simplification   (i.e. every formula is equivalent to an existential formula) in $\cL_\cA$).
Moreover, the theory of algebraically closed valued fields with strictly convergent analytic $\cA$--structure eliminates quantifiers in both the languages $\cL_{\cA^H}$ and $\cL_\cA^{h}$ (both considered as the intended definitial expansions of $\cL_\cA$).
\end{thm}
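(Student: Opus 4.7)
The plan is to reduce the theorem to the known quantifier elimination for algebraically closed valued fields with separated analytic structure, using that the extension $\cA \rightsquigarrow \cA^H$ is both conservative (every new function is existentially $\cL_\cA$-definable) and completes the system to one satisfying separated Weierstrass preparation.

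First, I would invoke Theorem \ref{ex1}, which tells us that $\cA^H$ is a separated Weierstrass system and that $\cA$ is good. By Lemma \ref{recap0}, the given strictly convergent analytic $\cA$-structure $\sigma$ on $L$ extends uniquely to a separated analytic $\cA^H$-structure $\sigma^H$ on $L$. Since $L$ is algebraically closed, I can apply the quantifier elimination theorem for algebraically closed valued fields equipped with a separated analytic structure (this is the classical output of the LR2/CL1 framework recalled in Remark \ref{AQE} and the appendix, cf.\ Theorems \ref{QED}--\ref{QELR2}). This yields quantifier elimination for $L$ in $\cL_{\cA^H}$.

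Second, to pass from $\cL_{\cA^H}$ to $\cL_\cA^{h}$, I would use Lemma \ref{hf3}(iii): every term of $\cL_{\cA^H}$ is interpreted, uniformly in the analytic structure, by a term of $\cL_\cA^{h}$. Consequently, every atomic (and hence every quantifier-free) $\cL_{\cA^H}$-formula is, in any field with analytic $\cA$-structure, equivalent to a quantifier-free $\cL_\cA^{h}$-formula. Combined with the previous step this gives quantifier elimination in $\cL_\cA^{h}$. The quantifier simplification statement for $\cL_\cA$ then follows from Corollary \ref{sim} (together with Lemma-Definition \ref{hf2bis}): each symbol of $\cL_{\cA^H}$ or $\cL_\cA^{h}$ not already in $\cL_\cA$ has a graph that is existentially $\cL_\cA$-definable, via a formula that does not depend on $\sigma$, so every quantifier-free $\cL_{\cA^H}$- or $\cL_\cA^{h}$-formula becomes an existential $\cL_\cA$-formula.

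Third, for the theory-level statement, I would observe that the quantifier elimination in $\cL_{\cA^H}$ (for separated analytic structures on algebraically closed valued fields) is uniform in the model: the formulas witnessing the equivalences in the QE proof of \cite{LR2}/\cite{CL1} are determined by the system $\cA^H$ itself, not by any particular $\sigma$. The translation of Lemma \ref{hf3}(iii) is also uniform (no dependence on $\sigma$), as is Corollary \ref{sim}. Therefore the same quantifier-free formulas serve across all algebraically closed valued fields with analytic $\cA$-structure, giving QE for the theory in $\cL_{\cA^H}$ and in $\cL_\cA^{h}$.

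The main obstacle is really just bookkeeping: verifying that the quantifier elimination imported from the separated Weierstrass setting produces formulas (and the term-to-term translation in Lemma \ref{hf3}(iii) produces terms) that are independent of the particular model $L$ and structure $\sigma$, so that both the ``for any $L$'' and ``for the theory'' forms are obtained simultaneously. Everything else is an application of results already established earlier in the paper, namely Theorem \ref{MainThm}, Theorem \ref{ex1}, Lemma \ref{recap0}, Lemma \ref{hf3}, and Corollary \ref{sim}.
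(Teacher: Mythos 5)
Your proposal is correct and follows essentially the same route as the paper's own proof: reduce to the separated case via Theorem~\ref{ex1} (and Lemma~\ref{recap0}), invoke the quantifier elimination for algebraically closed valued fields with separated analytic $\cA^H$-structure (Theorem~4.5.15 of \cite{CL1}, as amended by Remark~\ref{correction0}), and translate $\cL_{\cA^H}$-terms to $\cL_\cA^{h}$-terms via Lemma~\ref{hf3}(iii). One small caution: you should cite the \cite{CL1} theorem directly rather than pointing to Theorems~\ref{QED}--\ref{QELR2}, since Theorem~\ref{QELR2} is explicitly stated to be a special instance of the very theorem being proved and so cannot serve as its support.
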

\begin{proof}
Follows from Theorem \ref{ex1}, Lemma \ref{hf3} (iii), Theorem 4.5.15 of \cite{CL1} for $\cA^H$, amended by Remark \ref{correction0}.
\end{proof}

\begin{thm}\label{terms}
(i) (characteristic $0$)
Let $x = (x_1,\dots,x_n)$ be several variables and $y$ one variable. Suppose that a formula $\varphi(x,y) \in \cL_\cA$ defines $y$ as a function of $x$ in all algebraically closed fields of characteristic $0$ with analytic $\cA$--structure. Then there is a term $\tau$ of $\cL_\cA^{h}$ such that
$L \models \varphi(x,y) \leftrightarrow y = \tau(x)$ for all algebraically closed valued fields $L$ of characteristic $0$ with analytic $\cA$--structure.

(ii) (characteristic $p \neq 0$) In characteristic $p>0$ let $\cL^{'}_\cA$ be the language $\cL_\cA$ with the $p$-th root function $(\cdot)^\frac{1}{p}$ adjoined.  In characteristic $p$ the analogous result to (i) holds with $\cL^{'h}_\cA$ in place of $\cL_\cA^h$. Namely, for any $\cL_\cA$-definable function $f$ for the theory $T$ of algebraically closed valued fields of characteristic $p$ with analytic $\cA$--structure, defined on a Cartesian power of the valued field, there exists an $\cL^{'h}_\cA$-term $t$ such that $T$ proves that $t$ and $f$ are the same function.
\end{thm}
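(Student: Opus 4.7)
The plan is to bootstrap the quantifier elimination of Theorem \ref{QEL} to a ``definability equals termhood'' statement by splicing together finitely many pieces with Lemma \ref{hf3}(ii).

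First I treat (i). Let $T$ denote the theory of algebraically closed valued fields of characteristic zero with analytic $\cA$--structure. By Theorem \ref{QEL}, $\varphi(x,y)$ is $T$-equivalent to a quantifier-free $\cL_\cA^{h}$-formula, which I may place in disjunctive normal form. A standard compactness argument, together with the hypothesis that $\varphi$ defines $y$ as a function of $x$ in every model of $T$, produces finitely many pairs $(\psi_i(x),\tau_i(x))$, $1\le i\le k$, with $\psi_i$ a quantifier-free $\cL_\cA^{h}$-formula and $\tau_i$ an $\cL_\cA^{h}$-term, such that
\[
T \vdash (\exists y\,\varphi(x,y)) \leftrightarrow \bigvee_i \psi_i(x)
\qquad\text{and}\qquad
T \vdash \psi_i(x) \rightarrow (\varphi(x,y)\leftrightarrow y=\tau_i(x)).
\]
The existence of such a $\tau_i$ for a given complete quantifier-free $\cL_\cA^{h}$-type $p(x)$ is the heart of the matter: after a normalization, $\varphi(x,y)$ on $p(x)$ must contain an equation $P(x,y)=0$ whose unique solution, by the uniqueness of $y$, has the property that $P$ becomes regular (after Weierstrass preparation using the $\cL_\cA^{h}$-terms in $x$, cf.\ Theorem \ref{ex1} and Proposition \ref{CH}) of degree one in $y$ at its distinguished root; Hensel's lemma, realised by a basic henselian function $h_n$, then produces that root as the value of an $\cL_\cA^{h}$-term.

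Next I patch. After replacing $\psi_i$ by $\psi_i \wedge \bigwedge_{j<i}\neg\psi_j$, the $\psi_i$ are pairwise disjoint, and the proof of Lemma \ref{hf3}(ii) shows that each $\psi_i$ has a characteristic function $\chi_i$ that is itself an $\cL_\cA^{h}$-term. The term
\[
\tau(x) \;:=\; \sum_{i=1}^k \chi_i(x)\cdot \tau_i(x)
\]
then satisfies $L\models \varphi(x,y) \leftrightarrow y=\tau(x)$ for every $L\models T$.

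For (ii), the argument is parallel, with $\cL^{'h}_\cA$ in place of $\cL_\cA^{h}$; quantifier elimination in the expanded language follows from Theorem \ref{QEL} because the graph of $(\cdot)^{1/p}$ is quantifier-free $\cL$-definable (via $y=x^{1/p}\leftrightarrow y^p=x$, using uniqueness of the $p$-th root in characteristic $p$). The only genuinely new step is the extraction of $\tau_i$: the defining polynomial $P(x,y)$ in $y$ need not be separable, but writing $P(x,y)=Q(x,y^{p^r})$ with $Q$ separable in its second argument reduces, after $r$ applications of the $p$-th root, to the characteristic zero situation handled above. The main obstacle is exactly this extraction step, where one must push Hensel's lemma through Weierstrass preparation in the presence of the analytic terms; here the good pre-Weierstrass property of $\cA$ (Theorem \ref{ex1}), Proposition \ref{CH}, and Lemma \ref{hf3} provide the machinery needed to stay inside $\cL_\cA^{h}$ (respectively $\cL^{'h}_\cA$).
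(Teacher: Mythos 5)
The overall scaffolding (QE in $\cL_\cA^h$, compactness over types, patching with term-valued characteristic functions as in Lemma \ref{hf3}(ii)) is reasonable, but the argument has a genuine gap at exactly the point you label ``the heart of the matter.'' You assert that on each complete quantifier-free type $p(x)$ the formula $\varphi(x,y)$ contains an equation $P(x,y)=0$ which, after Weierstrass preparation, becomes regular of degree one in $y$ at the distinguished root, so that a basic henselian function extracts $y$. This is not argued, and it is not a routine consequence of quantifier elimination: QE in a language with function symbols never automatically entails that $0$-definable elements are term-values (already ACVF with the pure valued field language shows that definable $\neq$ term-definable). In fact, the claim that the definably-unique $y$ is a simple root of a suitable Henselian preparation is precisely the substance of the theorem; asserting it for each type and then invoking compactness merely restates what has to be shown. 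The same gap then undercuts the characteristic-$p$ reduction, which presupposes the characteristic-$0$ case has been handled.

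The paper's proof takes a different and considerably slicker route. It first passes to the separated system $\cA^H$ and applies Proposition \ref{termsalg}, which is a classical ``definable closure by automorphisms'' argument: if no term of $\cL_{\cA^H}$ gives $y$, compactness produces a model $L$ and a tuple $x$ for which no term works; taking $L'$ to be the fraction field of the algebra $\Delta(x)$ of term-values, the unique $y$ with $\varphi(x,y)$ lies in $L'_{alg}\setminus L'$, and since $L'$ is perfect (characteristic $0$), there is a field automorphism of $L'_{alg}$ over $L'$ moving $y$. The crucial input that this automorphism preserves the analytic $\cA^H$-structure is \cite{CL1} Theorem 4.5.11; the uniqueness of $y$ is then contradicted. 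Characteristic $p$ is handled by adjoining the $p$-th root, which keeps $L'$ perfect. Finally, Lemma \ref{hf3}(iii) converts the resulting $\cL_{\cA^H}$-term back into an $\cL_\cA^h$-term. Your proposal never invokes the automorphism-preservation fact (or any substitute) and so does not reach the conclusion; if you want to salvage the direct extraction-of-roots approach you would in effect have to reprove the content of Proposition \ref{termsalg}, and the Weierstrass preparation / Hensel step you sketch would need an actual argument explaining why the distinguished zero is simple.
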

\begin{proof}
Follows from Proposition \ref{termsalg} and Lemma \ref{hf3} (iii).
\end{proof}

Similarly, but invoking Theorem 6.3.7 instead of Theorem 4.5.15 of \cite{CL1}, we find a more general but more involved quantifier elimination result, using leading terms sorts. Let $\cL_{\Hens,\cA}$ (resp. $\cL_{\Hens,\cA^H}$) be as in Section 6.2 of \cite{CL1}, namely the multi-sorted language which has the field language and function symbols for all $f$ in $A_m$ and all $m$ (resp. in  $A_{m,n}^H$ for all $m,n$) on the valued field sort, the maps $rv_N$ for each integer $N>0$ and the full induced structure on the sorts $RV_N$. Recall that on a valued fiell $L$, $RV_N(L)$ is the quotient of multiplicative semi-groups $L/1+nL^{\circ\circ}$, and $rv_N:L\to RV_N(L)$ the projection.

\begin{thm}\label{QELrem}
The theory of characteristic zero henselian valued fields with strictly convergent analytic $\cA$-structure eliminates the valued field quantifiers in both the languages $\cL_{\Hens, \cA^H}$ and $\cL_{\Hens, \cA}^{h}$ (both considered as the intended definitial expansions of $\cL_{\Hens,\cA}$).
\end{thm}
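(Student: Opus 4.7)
The plan is to follow the same three-step strategy used for Theorem \ref{QEL}, but substituting the henselian, characteristic-zero version of valued field quantifier elimination from \cite{CL1} (Theorem 6.3.7) in place of the algebraically closed version (Theorem 4.5.15), and keeping track of the $RV_N$-sort structure along the way.

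First I would invoke Theorem \ref{ex1} (together with Theorem \ref{MainThm}) to know that $\cA^H = \{A_{m,n}^H\}$ is a genuine separated Weierstrass system, so that the general machinery of \cite{CL1} applies to it. By Lemma \ref{recap0}, every analytic $\cA$-structure $\sigma$ on a henselian field $K$ extends uniquely to a separated analytic $\cA^H$-structure $\sigma^H$, so the class of henselian valued fields of characteristic zero with analytic $\cA$-structure coincides, as a class of $\cL_{\Hens, \cA^H}$-structures, with the corresponding class of separated $\cA^H$-structures. Theorem 6.3.7 of \cite{CL1} then gives valued field quantifier elimination for this class in the language $\cL_{\Hens, \cA^H}$, which handles the first half of the statement.

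For the second half, I need to replace the function symbols of $\cA^H$ by basic henselian function symbols $h_n$. Lemma \ref{hf3}(iii) says that every term of $\cL_{\cA^H}$ is, on every analytic $\cA$-structure, equal to a term of $\cL_\cA^{h}$; so any quantifier-free $\cL_{\Hens, \cA^H}$-formula in valued field variables is equivalent to a quantifier-free $\cL_{\Hens, \cA}^{h}$-formula. Combining this with the elimination result just obtained in $\cL_{\Hens, \cA^H}$ yields valued field quantifier elimination in $\cL_{\Hens, \cA}^{h}$. It remains to check that both $\cL_{\Hens, \cA^H}$ and $\cL_{\Hens, \cA}^{h}$ are indeed definitional expansions of $\cL_{\Hens, \cA}$. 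For $\cL_{\Hens, \cA^H}$ this is Corollary \ref{sim}, which says that the graph of every $f \in \bigcup_{m,n} A_{m,n}^H$ is existentially $\cL_\cA$-definable by a formula independent of $\sigma$; combined with the already-obtained quantifier elimination this upgrades to quantifier-free definability (so the expansion is truly definitional). For $\cL_{\Hens, \cA}^{h}$ the basic henselian functions $h_n$ have quantifier-free $\cL$-definable graphs by Definition \ref{hf1}.

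The one subtle point, and what I expect to be the main obstacle, is simply bookkeeping: making sure that the quantifier elimination statement from Theorem 6.3.7 of \cite{CL1} is applied to the correct sorts, namely that the $RV_N$-sorts with their full induced structure are preserved, and that the existential formulas witnessing graphs of $h_\Sigma$ use only valued field variables and function symbols already in $\cL_{\Hens, \cA}$ (not, for instance, inadvertently introducing new $RV_N$-quantifiers). This is why we only eliminate valued field quantifiers, not all quantifiers, and why the formulation matches that of Theorem 6.3.7 rather than Theorem 4.5.15. Once this is checked — and it is checked uniformly by the proof of Corollary \ref{sim} and Lemma \ref{hf3}(iii), which both produce formulas not referencing the auxiliary sorts — the theorem follows.
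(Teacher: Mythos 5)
Your proposal follows the paper's approach exactly: the paper derives this result from Theorem \ref{ex1}, Lemma \ref{hf3}(iii), and Theorem 6.3.7 of \cite{CL1}, as indicated in the paragraph preceding the statement and by analogy with the explicit proof of Theorem \ref{QEL}. Your additional remarks on Lemma \ref{recap0}, $RV_N$-sort bookkeeping, and Corollary \ref{sim} correctly make explicit what the paper leaves implicit; the only inessential slip is the parenthetical ``upgrade to quantifier-free definability,'' which is not needed (existential definability of the new symbols' graphs already makes $\cL_{\Hens,\cA^H}$ a definitional expansion of $\cL_{\Hens,\cA}$) and would be circular as stated, since the quantifier elimination you would be invoking lives in the expanded language.
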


We can also formulate the following result on the term-structure of definable functions. To this end, let $\cL_{\Hens,\, \cA}^*$, (resp.~$\cL_{\Hens,\cA^H}^{*}$)  be the language $\cL_{\Hens,\, \cA}$, (resp.~$\cL_{\Hens,\cA^H}$) together with all the functions $h_{m,n}$ as defined in Definition 6.1.7 of \cite{CL1}. Note that these functions $h_{m,n}$ are multi-sorted variants of the basic henselian functions $h_n$ of Definition \ref{hf1}.

\begin{thm}[Term structure]\label{thens}
The theory of characteristic zero henselian valued fields with strictly convergent analytic $\cA$-structure has the following property, uniformly in its models $K$.
Let $X \subset K^n$ be
definable and let $f:X\to K$ be an $\cL_{\Hens,\cA}$-definable
function. Then there exist an
$\cL_{\Hens,\cA}$-definable function $g:X\to RV_N(K)^N$ for some $N> 0$ and an $\cL_{\Hens,\cA}^{*}$-term $t$ such that
\begin{equation}\label{et}
f(x)=t(x,g(x))
\end{equation}
for all $x\in X$.
\end{thm}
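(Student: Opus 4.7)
The plan is to reduce the statement to the analogous term-structure result in the separated case (which is the analogue of Theorem 6.3.7 of \cite{CL1} for separated analytic structures), and then to rewrite the $\cA^H$-function symbols that appear in the resulting term using basic henselian functions by means of Lemma \ref{hf3}.

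First, since $\cA^H$ is a separated Weierstrass system extending $\cA$ (Theorem \ref{ex1}, via Proposition \ref{complete}), any analytic $\cA$-structure on a henselian valued field $K$ of characteristic zero canonically extends to an analytic $\cA^H$-structure (Lemma \ref{recap0}). Consequently, the $\cL_{\Hens,\cA}$-definable set $X$ and the $\cL_{\Hens,\cA}$-definable function $f\colon X\to K$ are automatically $\cL_{\Hens,\cA^H}$-definable, and the theory $T$ of characteristic zero henselian valued fields with analytic $\cA$-structure is a definitial reduct of the theory $T^H$ of such fields with analytic $\cA^H$-structure. So it is enough to produce an $\cL_{\Hens,\cA^H}^{*}$-term $t'$ and an $\cL_{\Hens,\cA^H}$-definable function $g\colon X\to RV_N(K)^N$ with $f(x)=t'(x,g(x))$: any such $g$ is automatically $\cL_{\Hens,\cA}$-definable by Theorem \ref{QELrem} (which gives elimination of valued-field quantifiers in $\cL_{\Hens,\cA^H}$ and hence definable functions into $RV_N$-sorts only use $\cL_{\Hens,\cA}$-information up to definitial equivalence).

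Second, I would apply the term-structure result available for separated analytic structures to the system $\cA^H$. This is exactly the content of the corresponding theorem of \cite{CL1} (Theorem 6.3.7 together with the $h_{m,n}$-term formulation of Definition 6.1.7 there), which applies to any characteristic-zero henselian field with a separated analytic Weierstrass structure and yields, for any $\cL_{\Hens,\cA^H}$-definable function $f$ on a definable set $X\subset K^n$, an $\cL_{\Hens,\cA^H}$-definable map $g\colon X\to RV_N(K)^N$ and an $\cL_{\Hens,\cA^H}^{*}$-term $t'$ with $f(x)=t'(x,g(x))$ for all $x\in X$. This directly uses elimination of valued field quantifiers (Theorem \ref{QELrem}) to split the graph of $f$ into pieces, on each of which a distinguished witness for the valued-field Skolemization can be expressed by a term involving the multi-sorted henselian functions $h_{m,n}$.

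Third, to upgrade the term $t'$ from the language $\cL_{\Hens,\cA^H}^{*}$ to the desired $\cL_{\Hens,\cA}^{*}$, I would apply Lemma \ref{hf3} uniformly to each occurrence in $t'$ of a function symbol from $\bigcup_{m,n}A_{m,n}^H$: every such symbol, when interpreted on a field with analytic $\cA$-structure, agrees with an $\cL_\cA^{h}$-term, and hence with an $\cL_{\Hens,\cA}^{*}$-term since the basic henselian functions $h_n$ are a special case of the multi-sorted $h_{m,n}$. Substituting these translations into $t'$ produces the required $\cL_{\Hens,\cA}^{*}$-term $t$ with $f(x)=t(x,g(x))$, and since Lemma \ref{hf3} produces translations that are independent of the analytic structure $\sigma$, the resulting statement is uniform in the models of $T$.

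The main technical obstacle is the cleanness of the translation in Lemma \ref{hf3} when composed with the separated term structure from \cite{CL1}: in particular, one must observe that the term $t'$ is built from $\cA^H$-symbols and multi-sorted $h_{m,n}$-symbols together with field operations, so that Lemma \ref{hf3}(iii) can be applied inside-out. Apart from this bookkeeping, the proof is a direct assembly of Theorems \ref{ex1}, \ref{QELrem}, and the separated case of \cite{CL1}.
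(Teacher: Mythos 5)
Your overall strategy matches the paper's: apply the separated-case term-structure theorem of \cite{CL1} (the paper cites Theorem~6.3.8, not 6.3.7) to get $g$ and a term $t'$ in $\cL_{\Hens,\cA^H}^{*}$, then use Lemma~\ref{hf3}(iii) to rewrite the $\cA^H$-symbols. However, the final translation step as you state it has a gap. You claim that ``the basic henselian functions $h_n$ are a special case of the multi-sorted $h_{m,n}$,'' but this is not justified and appears to be false as stated: the $h_{m,n}$ of Definition~6.1.7 of \cite{CL1} take arguments from both the valued field sort and the $RV$-sorts, whereas the $h_n$ of Definition~\ref{hf1} are purely valued-field functions. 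The paper's proof acknowledges exactly this difficulty: after invoking Lemma~\ref{hf3}(iii) it arrives at a term in $(\cL_{\Hens,\cA}^{*})^h$ (i.e.\ with the single-sorted $h_n$ adjoined), and then concludes by \emph{changing} $g$ so that the additional $RV$-data required by the $h_{m,n}$ is produced by $g$ -- ``if we change $g$ we can rephrase the $h_n$ in terms of the $h_{m,n}$''. Your plan keeps $g$ fixed throughout and silently absorbs the $h_n$ into the $h_{m,n}$, which skips the step that makes the argument actually close.

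A secondary omission: you appeal to the $\sigma$-independence of the translations in Lemma~\ref{hf3} to get uniformity, but you do not address uniformity of the initial $g$ and $t'$ produced by the cited theorem of \cite{CL1}, which a priori is a per-model statement. The paper handles this with a compactness argument combined with the ``characteristic function of a quantifier-free formula'' trick used at the end of the proof of Lemma~\ref{hf3}. This bookkeeping should be made explicit in a complete write-up.
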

\begin{proof}
By Theorem 6.3.8 of \cite{CL1}, we can find $g$ and a term $t$ in $\cL_{\Hens,\cA^H}^{*}$ such that (\ref{et}) holds for all $x\in X$. By compactness and using characteristic functions as at the end of the proof of Lemma \ref{hf3}, we see that such $g$ and $t$ can be taken independently from the choice of model $K$. By Lemma \ref{hf3}(iii), we can take $t$ to be in $(\cL_{\Hens,\cA}^{*})^h$, using the same $g$. Now, if we change $g$ we can rephrase the $h_n$ in terms of the $h_{m,n}$, to conclude the proof.
\end{proof}

\begin{rem}
Note that the elimination of valued field quantifiers of Theorem \ref{QELrem} also holds for $\cL_{\Hens, \cA}^{*}$ instead of $\cL_{\Hens, \cA}^{h}$, by the argument at the end of the proof of Theorem \ref{thens}.
\end{rem}

Because henselian functions, reciprocals and $D$--functions are the unique solutions of their defining equations, with the above notation, we have the following corollary.  The result analogous to (i) implicit in \cite{LR2} gave only a finite-to-one projection. In \cite{LR6} we proved the result analogous to (ii) for $\cL_{\cA_{sep}}$--subanalytic sets.

\begin{cor} (i) Every $\cL_\cA$--subanalytic set (i.e. a subset of $\Kalg^{~n}$ defined by an $\cL_\cA$--formula) is the one-to-one projection of an $\cL_\cA$--semianalytic set (i.e. a subset of $\Kalg^{~n+N}$ defined by a quantifier-free $\cL_\cA$--formula which does not involve $(\cdot)^{-1}$). \\
(ii) Every $\cL_\cA$--subanalytic set is the finite disjoint union of $\cL_\cA$--subanalytic manifolds.
\end{cor}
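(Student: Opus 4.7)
The plan is to derive both parts from the quantifier elimination of Theorem \ref{QEL}, together with the key observation that each extra function symbol in the definitional expansion $\cL_\cA^{h}$ is the \emph{unique} solution of an $\cL_\cA$-quantifier-free, inverse-free system of (polynomial) equations and inequalities. This is in contrast to the situation of \cite{LR2}, where the additional function symbols were only known to be algebraic over $\cL_\cA$-terms, hence giving only a finite-to-one projection.

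For part (i), let $X \subseteq \Kalg^{n}$ be $\cL_\cA$-subanalytic. By Theorem \ref{QEL}, $X$ is defined by a quantifier-free $\cL_\cA^{h}$-formula $\phi(x)$. I would then rewrite $\phi$ term by term from the innermost subterms outward. For every occurrence of a subterm of the form $h_k(a_0,\dots,a_k,b)$ I introduce a fresh variable $y$ together with the quantifier-free, inverse-free conjunction expressing the Hensel hypotheses $\abs{a_k b^k+\cdots+a_0}<1$ and $\abs{ka_k b^{k-1}+\cdots +a_1}=1$, the equation $a_k y^k+\cdots+a_0=0$, and the proximity $\abs{y-b}<1$ from Definition \ref{hf1}; on the complement of the Hensel hypotheses I bind $y=0$ by the convention of that definition. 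Similarly each occurrence of $x^{-1}$ is replaced by a fresh variable $z$ with the case split $x=0$ (forcing $z=0$) versus $x\neq 0$ together with $xz=1$. At each stage the new variable is uniquely determined by the remaining coordinates, and the division symbol $|$ is already quantifier-free and inverse-free, so the projection from the resulting semianalytic set $Y\subseteq \Kalg^{\,n+N}$ back onto $\Kalg^n$ is a bijection onto $X$.

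For part (ii), I would invoke the stratification/cell decomposition available for the separated Weierstrass system $\cA^H$. By Theorem \ref{ex1}, $\cA^H$ is a separated Weierstrass system over $\Ko$, so by the stratification results already recalled in \cite{CL1} (and going back to \cite{LR6} in the separated setting), any $\cL_{\cA^H}$-definable subset of $\Kalg^{\,n}$ can be written as a finite disjoint union of $\cL_{\cA^H}$-analytic manifolds. Since the expansion $\cL_{\cA^H}/\cL_\cA$ is definitional (Theorem \ref{QEL}), these manifolds are in particular $\cL_\cA$-subanalytic, yielding the claim.

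The main obstacle is the bookkeeping in (i): one must check that the replacement procedure preserves bijectivity under arbitrarily nested compositions of $h_n$'s and inverses, and that the auxiliary case splits on the Hensel and non-vanishing conditions can all be phrased by inverse-free, quantifier-free $\cL_\cA$-formulas. Part (ii) is essentially a direct application of the separated theory, the only nontrivial input being the passage from the strictly convergent system $\cA$ to the separated system $\cA^H$, which is precisely what Theorem \ref{ex1} furnishes.
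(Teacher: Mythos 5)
Your proof follows the same route as the paper: part (i) uses the quantifier elimination of Theorem \ref{QEL} together with the observation that the henselian functions $h_n$ and the reciprocal are the \emph{unique} solutions of quantifier-free, inverse-free $\cL_\cA$-conditions, so introducing fresh variables yields a one-to-one projection (the improvement over \cite{LR2}, where the extra functions were only algebraic and gave a finite-to-one projection); part (ii) transfers the stratification of \cite{LR6} through the separated system $\cA^H$ of Theorem \ref{ex1}, using that the expansion is definitional. This is precisely the argument the paper indicates in the paragraph immediately preceding the corollary.
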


\begin{rem}There are several different definitions of (rigid) semi-analytic sets (cf. \cite{LL2}, \cite{LR1}, \cite{Martin1}).  The above definition is perhaps the most restrictive, and best called ``globally affinoid (or strictly convergent) semi-analytic".
\end{rem}

\subsection{The case when $A$ is a noetherian ring, complete in its $I$--adic topology.}\label{ex2a}

We now treat the case that $A$ is a noetherian ring which is complete in its $I$--adic topology (i.e. $A = A~\widehat{}~ = \varprojlim_n A/I^n$, so $A$ is also $I$--adically separated). This case complements the study of \cite{CLR1}.

For example, one could take $A = \bZ[[t]]$ and
$$
A_m= \bZ[[t]]\langle \xi_1, \cdots , \xi_m \rangle =   \bZ[[t]] [ \xi_1, \cdots , \xi_m ] \ \widehat{}~,
$$
 the $t$--adic completion of $ \bZ[[t]] [ \xi_1, \cdots , \xi_m ]$. Then $\cA := \{A_m\}$ is a strictly convergent pre-Weierstrass system.
  The fields $\bQ_p, \bC_p$ for $p$ prime, and all ultraproducts of these fields,  have analytic $\cA$--structure
via the natural maps $t \mapsto p$.  The fields $\bF_p((t)),$ for $p$ prime, and all ultraproducts of these fields,  have analytic $\cA$--structure via the natural maps $p \mapsto 0$.  We could consider all of these to be fields with analytic structure satisfying the side conditions that for all $p$ either $p = 0$ or $|p|=1$ or $p$ is prime, and that $t$ is prime.  This situation is discussed in Appendix \ref{side}.

There are many more ways that a henselian field can have analytic $\cA$--structure.  For example, the field of puiseux series
$\bC((t^{\frac{1}{\infty}}))$ has analytic $\cA$--structure, via $t \mapsto t$. As another example, let $\bC_p^*$ be a non-principal ultrapower of $\bC_p$ and consider the mapping $\sigma$ that sends $t $ to $[p, p^{\frac{1}{2}}, p^{\frac{1}{3}}, \cdots]$, so in
$\bC_p^*$ the element $\sigma(t)$ is an ``infinitesimal" power of $p$.  This $\sigma$ gives an analytic $\cA$--structure on $\bC_p^*$. Alternatively, one could map $t$ to a finite but irrational power of $p$.

Theorem \ref{ex3}  below explains, via \cite{CL1}, aspects of the geometry and model theory of definable sets in all these analytic structures uniformly.

Note that Theorem \ref{ex1} above is not covered by Theorem \ref{ex3}, except when $K$ is discretely valued.

\begin{thm}\label{ex3} Let $A$ be a noetherian ring, $I$ an ideal of $A$ with $I\not=A$, and assume that $A$ is complete in its $I$--adic topology.  Let $A_m := A[\xi_1, \cdots ,\xi_m]~\widehat{}~$ be the $I$--adic completion of $A[\xi_1, \cdots ,\xi_m]$.  Then  $\cA$ is a good pre-Weierstrass System and $\cA^H := \{A_{m,n}^H\}$ is a separated Weierstrass System.
\end{thm}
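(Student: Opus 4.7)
The plan is to reduce both claims to the Criterion of Proposition \ref{complete}. It suffices to exhibit a separated Weierstrass system $\cB = \{B_{m,n}\}$ with $A_{m,n} \subseteq B_{m,n}$ and $B_{m,n}$ faithfully flat over $(A_{m,n})_{1 + (\rho) A_{m+n}}$. The natural candidate is
\[ B_{m,n} \;:=\; A_m[[\rho_1, \ldots, \rho_n]], \]
which one verifies coincides with the $(I, \rho)$-adic completion of the polynomial ring $A[\xi, \rho]$ and also with the $(\rho)$-adic completion of the Noetherian ring $A_{m+n}$. Since the $I$-adic topology on $A[\xi, \rho]$ refines the $(I, \rho)$-adic topology, there is a natural map $A_{m+n} \to B_{m,n}$; injectivity follows from Krull's intersection theorem applied in the Noetherian ring $A_{m+n}$ after localizing at $1 + (\rho)$, so $A_{m,n} \hookrightarrow B_{m,n}$.

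Next I would verify that $\cB$ is a separated Weierstrass system in the sense of \cite{CL1}. Closure under permutations of the $\xi_i$ and of the $\rho_j$, and the substitution axiom, are immediate from the series description of $B_{m,n}$. Weierstrass preparation in a $\rho_j$-variable is the classical Weierstrass preparation theorem applied to formal power series in $\rho_j$ over the Noetherian ring obtained by deleting $\rho_j$. Preparation in a $\xi_i$-variable is obtained by first preparing the reduction modulo $(\rho)$ using property (vi) of the pre-Weierstrass system $\cA$, and then lifting this preparation up the $(\rho)$-adic filtration one step at a time by applying strictly convergent Weierstrass division in $A_m$ at each stage, in the spirit of the argument in the proof of Proposition \ref{Wprep}. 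The strong noetherian property follows from the fact that $B_{m,n}$ is itself Noetherian (Hilbert basis for $A[\xi,\rho]$ plus \cite{Mat} Theorem 8.12 on Noetherianness of adic completions of Noetherian rings).

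The faithful flatness then rests on the classical statement that, for a Noetherian ring $R$ and an ideal $J \subseteq R$, the $J$-adic completion $\widehat R$ is faithfully flat over the localization $R_{1+J}$; this follows from Artin--Rees together with the observation that $J \widehat R$ is contained in the Jacobson radical of $\widehat R$, see e.g.\ \cite{Mat} Theorem 8.14. Applied with $R = A_{m+n}$ and $J = (\rho) A_{m+n}$, and using the identification of $B_{m,n}$ with the $(\rho)$-adic completion of $A_{m+n}$ (which rests on $A_{m+n}$ being already $I$-adically complete), this delivers precisely the hypothesis of Proposition \ref{complete}.

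The main obstacle, in my view, is not the commutative algebra, which is routine in this Noetherian $I$-adically complete setting, but the careful bookkeeping to match the axioms of a separated Weierstrass system as formulated in \cite{CL1}: specifically that in Weierstrass preparation for $f \in B_{m,n}$ regular of degree $d$ in $\xi_m$, respectively $\rho_n$, the Weierstrass data actually lie in $B_{m-1,n}$, respectively $B_{m,n-1}$, and not merely in some larger ambient ring, and that the strong noetherian property takes the expected form in both kinds of variables. Once these verifications are in place, Proposition \ref{complete} yields simultaneously that $\cA$ is good and that $\cA^H$ is a separated Weierstrass system.
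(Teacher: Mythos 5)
Your proposal takes the same overall route as the paper: define $B_{m,n} := A_m[[\rho]]$, show that $\cB=\{B_{m,n}\}$ is a separated Weierstrass system, identify $B_{m,n}$ with the $(\rho)$-adic completion of the (already $I$-adically complete, Noetherian) ring $A_{m+n}$, invoke Matsumura Theorem 8.14 for faithful flatness over the localization at $1+(\rho)$, and conclude via Proposition \ref{complete}. The main difference is that the paper does not re-verify that $\cB$ is a separated Weierstrass system; it simply cites \cite{CLR1} Lemma 2.9, which already establishes this, including the strong noetherian property in the form (7)$'$ of Definition \ref{SNPstr'}.

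The one place where your outline is genuinely too quick is the sentence \emph{``The strong noetherian property follows from the fact that $B_{m,n}$ is itself Noetherian.''} Noetherianness of $B_{m,n}$ is necessary input, but by itself it does not produce the specific factorization $f = \sum_{(\mu,\nu)\in J} \overline c_{\mu\nu}\,\xi^\mu\rho^\nu(1+g_{\mu\nu})$ with the unit factors $1+g_{\mu\nu}$ required by condition (7) (or the stronger (7)$'$). Proving that does require working through the formal power series structure — roughly, the argument in \cite{CLR1} Lemma 2.9 combines Noetherianness with Krull/Artin--Rees type considerations and formal Weierstrass division to organize the tail of $f$ as a multiple of the chosen leading coefficients. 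Your lifting sketch for preparation in a $\xi$-variable along the $(\rho)$-adic filtration is the right idea and mirrors the technique of Proposition \ref{Wprep}, but the strong noetherian step needs its own explicit argument or a citation; as written it reads as if Noetherianness alone suffices, which it does not. Finally, a small point in your favor: you localize at $1+(\rho)A_{m+n}$, matching Proposition \ref{complete} exactly, whereas the paper's proof text writes $1+(I,\rho)A_{m,n}$; the two agree here because $I$-adic completeness makes $1 + I A_{m,n}$ consist of units, so there is no discrepancy.
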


\begin{proof}
 We showed in \cite{CLR1} Lemma 2.9 that if we define
$$
B_{m,n} :=A\langle\xi\rangle[[\rho]] := A[\xi_1, \cdots ,\xi_m] ~\widehat{}~[[\rho_1, \cdots , \rho_n]]
$$
and
$\cB := \{B_{m,n}\}$, then $\cB$ is a separated Weierstrass system. Indeed we showed that these rings satisfy condition (7)$'$ of Definition \ref{SNPstr'}.
Hence the result follows from Proposition \ref{complete} and the fact that $B_{m,n}$ is faithfully flat over $(A_{m,n})_{1 +(I,\rho)A_{m,n}}$.  Since the $(\rho)$--adic completion of $(A_{m,n})_{1 +(I,\rho)A_{m,n}}$ is $B_{m,n}$, faithful flatness follows from  \cite{Mat} Theorem 8.14.
\end{proof}

We have the following quantifier elimination result, as in Section \ref{ex1a}.

\begin{thm}\label{QEL2}
Theorems \ref{QEL}, \ref{terms}, \ref{QELrem}, and \ref{thens} go through in literally the same way, but with $\cA$ as in this Section \ref{ex2a} instead of as in Section \ref{ex1a}.
\end{thm}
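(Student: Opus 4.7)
The plan is to observe that the four proofs in Section \ref{ex1a} are formally insensitive to the particular strictly convergent pre-Weierstrass system used. What each of those proofs actually requires of $\cA$ is (a) that $\cA$ be a \emph{good} strictly convergent pre-Weierstrass system, so that every analytic $\cA$-structure extends uniquely to an analytic $\cA^H$-structure via Lemma \ref{recap0}, and (b) that $\cA^H$ be a separated Weierstrass system, so that the general machinery of \cite{CL1} (Theorem 4.5.15, Theorem 6.3.7, Theorem 6.3.8, Proposition \ref{termsalg}) may be applied to $\cA^H$. In the setting of Section \ref{ex1a} these two ingredients are furnished jointly by Theorem \ref{ex1}; in the setting of Section \ref{ex2a} they are furnished jointly by Theorem \ref{ex3}. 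So the proof is essentially a bookkeeping exercise: reread the four proofs and replace every invocation of Theorem \ref{ex1} by an invocation of Theorem \ref{ex3}.

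Concretely, I would proceed theorem by theorem. For the analogue of Theorem \ref{QEL}, I would combine Theorem \ref{ex3} with Lemma \ref{hf3}(iii) and Theorem 4.5.15 of \cite{CL1} (as amended by Remark \ref{correction0}) applied to the separated Weierstrass system $\cA^H$ supplied by Theorem \ref{ex3}; this produces quantifier elimination in $\cL_{\cA^H}$ and then, after translating henselian-system terms into terms involving only the basic henselian functions $h_n$, in $\cL_\cA^{h}$. For the analogue of Theorem \ref{terms}, I would use Proposition \ref{termsalg} for $\cA^H$ followed by Lemma \ref{hf3}(iii); the positive-characteristic variant requires the same adjoining of $(\cdot)^{1/p}$ as in the original statement, which is a property of algebraically closed valued fields in characteristic $p$ and is orthogonal to the choice of $\cA$.

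For the analogue of Theorem \ref{QELrem} I would invoke Theorem 6.3.7 of \cite{CL1} in place of Theorem 4.5.15, applied again to $\cA^H$ from Theorem \ref{ex3}, obtaining elimination of valued field quantifiers in $\cL_{\Hens,\cA^H}$, and then translate to $\cL_{\Hens,\cA}^{h}$ via Lemma \ref{hf3}(iii). For the analogue of Theorem \ref{thens}, I would appeal to Theorem 6.3.8 of \cite{CL1} for $\cA^H$ to get a term in $\cL_{\Hens,\cA^H}^{*}$, then use compactness together with the characteristic-function trick from the end of the proof of Lemma \ref{hf3} to make the data $g$ and $t$ uniform across models, and finally rewrite $\cL_{\Hens,\cA^H}^{*}$-terms as $(\cL_{\Hens,\cA}^{*})^{h}$-terms using Lemma \ref{hf3}(iii), exactly as in the original argument.

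I do not expect any genuine obstacle. The only point worth a remark is that certain interesting analytic $\cA$-structures in Section \ref{ex2a} (for instance $t \mapsto p$) come with the side conditions discussed in Appendix \ref{side}; these conditions restrict the class of models one considers but do not affect the elimination statements, which are formulated abstractly for \emph{all} henselian (respectively algebraically closed) valued fields carrying an analytic $\cA$-structure. The residue and $RV_N$ sorts in Theorems \ref{QELrem} and \ref{thens} are treated identically in both settings, since the arguments on those sorts in \cite{CL1} never look at the specific construction of $\cA$.
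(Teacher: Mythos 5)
Your proposal is correct and matches the paper's (implicit) reasoning exactly: the paper gives no explicit proof of Theorem \ref{QEL2}, leaving it as a direct consequence of Theorem \ref{ex3} having established precisely the two properties (goodness of $\cA$ and $\cA^H$ being a separated Weierstrass system) that the proofs of Theorems \ref{QEL}, \ref{terms}, \ref{QELrem}, and \ref{thens} drew from Theorem \ref{ex1}. Your theorem-by-theorem bookkeeping, including the roles of Lemma \ref{hf3}(iii), Proposition \ref{termsalg}, Theorems 4.5.15/6.3.7/6.3.8 of \cite{CL1}, and the compactness/characteristic-function argument for \ref{thens}, reproduces the intended dependencies accurately.
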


\subsection{A general concept and examples}\label{ex}\label{formal}

Motivated by Theorems \ref{ex1} and \ref{ex3}, we introduce the following terminology, of which these theorems in fact provide examples. See the appendix for a summary and elaboration of some of the definitions of \cite{CL1}.

\begin{defn}\label{SNPstr0} We call a strictly convergent \textbf{\emph{pre}}-Weierstrass system $\cA = \{A_m\}$ (Definitions \ref{PWS} and \ref{good}) a {\it a strictly convergent Weierstrass system}
 if $\cA$ is good and $\cA^H$ is a separated $(A,I)$-Weierstrass system (as defined in \cite{CL1} and also in Definitions \ref{sepWS} and \ref{con7} in the Appendix below).
\end{defn}

By the very nature of Theorems \ref{ex1} and \ref{ex3}, any analytic structure $\cA$ as in these theorems is  a strictly convergent Weierstrass system (as opposed to just a {\textbf{\emph{pre}}}-Weierstrass systems).

Before giving further examples we  outline an alternate proof for Theorem \ref{ex1}.  Let $A_m = T_m(K)^\circ$ and
$f = F\circ h_\Sigma \in A_{m,n}^H \subset S_{m,n}(K)$.  Let $|c| = \|f\|$ be the gauss-norm of $f$ in
$S_{m,n}(K)$.  We allow the possibility that $c=0$. Let $F$ and $h_\Sigma$ be defined over the
$B$--ring $E$.  If $|c| < 1$ we have that $\widetilde F \in \widetilde I_\Sigma \cdot \widetilde K[\xi][[\rho]]$, and hence by Krull's Theorem (\cite{Mat} Theorem 8.10),
also $\widetilde F \in \widetilde I_\Sigma \cdot \widetilde K[\xi][\rho]$.
By the usual induction on the levels of $E$ we eventually find $c \in \Ko$ and $G$ such that $f = c \cdot G \circ h_\Sigma$ and
$F - c \cdot G \in I_\Sigma$.  The case $c = 0$ gives ``goodness'', and the case
$c \neq 0$ reduces proving (vii) or (vii)$'$ to the special case that $\|f\| = 1$.
That case can be dealt with by induction on $m+n$, by breaking up $f$ into pieces exactly as in \cite{CL1} Remark 4.1.11(ii).

\begin{thm}\label{QEL3}
Theorems \ref{QEL}, \ref{terms}, \ref{QELrem}, and \ref{thens} go through in literally the same way, but with $\cA$ any strictly convergent Weierstrass system as in Definition \ref{SNPstr0} instead of more the concrete structure of Section \ref{ex1a}.
\end{thm}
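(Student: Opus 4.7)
The plan is to observe that the proofs of Theorems \ref{QEL}, \ref{terms}, \ref{QELrem}, and \ref{thens} are in fact ``black box'' arguments: they invoke the particular rings of Sections \ref{ex1a} and \ref{ex2a} only through the conclusions of Theorems \ref{ex1} and \ref{ex3}, namely that $\cA$ is good and that $\cA^H$ is a separated Weierstrass system. Since these are precisely the two properties that Definition \ref{SNPstr0} demands of a strictly convergent Weierstrass system, the identical arguments will apply in the abstract setting with no new input beyond bookkeeping.

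First I would verify that goodness is used in the original proofs only to invoke Lemma \ref{recap0}, which produces the canonical separated $\cA^H$-structure $\sigma^H$ extending an analytic $\cA$-structure $\sigma$ on a henselian valued field, and that the separated-Weierstrass property of $\cA^H$ enters only through the machinery of \cite{CL1} --- Theorem 4.5.15 (amended by Remark \ref{correction0}) for the elimination statement of Theorem \ref{QEL}, Proposition \ref{termsalg} for Theorem \ref{terms}, and Theorems 6.3.7 and 6.3.8 for Theorems \ref{QELrem} and \ref{thens}. Each of these \cite{CL1} results is formulated entirely at the level of an arbitrary separated Weierstrass system, and so applies verbatim to the $\cA^H$ furnished by our hypothesis. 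The passage from $\cL_{\cA^H}$ (respectively $\cL_{\Hens,\cA^H}$) back down to $\cL_\cA^{h}$ (respectively $\cL_{\Hens,\cA}^{h}$) is then handled by Lemma \ref{hf3}(iii), whose proof, via Proposition \ref{CH} and Lemma-Definition \ref{hf2bis}, was already carried out for an arbitrary pre-Weierstrass system and therefore transfers without change.

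The ``hard'' part here is not a mathematical obstacle but a careful audit: one must read through the proofs of Theorems \ref{QEL}--\ref{thens} and their supporting lemmas and confirm that no argument appeals to a specific feature of the Tate algebra structure of Section \ref{ex1a} or of the $I$-adic completion of Section \ref{ex2a} beyond goodness, separatedness of $\cA^H$, and the formal closure, composition and Weierstrass properties recorded in Definitions \ref{PWS}, \ref{SNPstr0} and in the Appendix. Once this verification is in place, each of the four theorems is obtained by repeating its original proof with Theorem \ref{MainThm} --- together with the goodness hypothesis built into Definition \ref{SNPstr0} --- playing the role previously played by Theorems \ref{ex1} and \ref{ex3}, and with the same compactness and characteristic-function manipulations (as at the end of the proof of Theorem \ref{thens}) carried out uniformly across the models.
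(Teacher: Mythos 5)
Your proposal is correct and takes essentially the same approach as the paper, which gives no explicit proof for Theorem \ref{QEL3} precisely because the proofs of Theorems \ref{QEL}, \ref{terms}, \ref{QELrem}, and \ref{thens} invoke only the two abstract properties (goodness of $\cA$ and the separated Weierstrass property of $\cA^H$) that Definition \ref{SNPstr0} packages, together with the \cite{CL1} machinery and Lemma \ref{hf3}(iii), all of which are formulated at that level of generality. Your audit of exactly where those inputs enter the original proofs is the intended argument.
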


Using Theorem \ref{ex3}, it is easy to write down many examples of strictly convergent Weierstrass systems other than those discussed at the beginning of the previous subsections. Here are yet some further examples of a different kind.

\begin{itemize}
\item [a)](cf. \cite{CL1} Section 4.4(10)).  Let $K$ be complete, rank one and let $\cA = \{A_m\}$ where
  $A_m = \Ko \langle \langle \xi_1, \cdots , \xi_m \rangle \rangle$, the ring of {\it overconvergent} power series
  in $(\xi_1, \cdots , \xi_m)$.  Then $\cA= \{A_m\}$ is a good pre-Weierstrass System and $\cA^H := \{A_{m,n}^H\}$ is a separated Weierstrass System. The proofs of these statements are similar to the proof of Theorem \ref{ex1}.
  \item[b)](cf. \cite{CL1} Section 4.4(4)).  Let $F$ be a {\it maximally complete field} and let
  $A_m = T_m(\cB)$ be the full ring of strictly convergent power series as defined in loc. cit.  Then
  $\cA = \{A_m\}$ is a good pre-Weierstrass System and $\cA^H := \{A_{m,n}^H\}$ is a separated Weierstrass System. The proof is  similar to the alternate proof of Theorem \ref{ex1} outlined above, except that transfinite induction on the support of $E$ is used.
 \item[c)] (cf. \cite{CL1} Section 4.4(7)).  Let $L$ be a field and let $$A_m = \bigcup_iL[[x_1, \cdots , x_i]]\langle \xi_1, \cdots , \xi_m \rangle $$ where $L[[x_1, \cdots , x_i]]\langle \xi_1, \cdots , \xi_m \rangle$ is the $(x_1, \cdots , x_i)$--adic completion of \\  $L[[x_1, \cdots , x_i]] [ \xi_1, \cdots , \xi_m ]$.  Then $\cA = \{A_m\}$ is  a good pre-Weierstrass System and $\cA^H := \{A_{m,n}^H\}$ is a separated Weierstrass System.  This is immediate from Theorem \ref{ex3}, since $A_m$ is the direct limit of rings covered by that theorem.
\end{itemize}

Several results for general strictly convergent Weierstrass systems now follow from  \cite{CL1}
such as quantifier elimination results similar to Theorem \ref{QEL2}, and the cell decomposition and Jacobian property of Theorem 6.3.7 of \cite{CL1}.

\begin{rem}
We also observe that the above methods allow one to give a sharpening of the results of \cite{CL1}, Section 4.6.  Let $K$ be a rank $1$ henselian field, and let $A(K)_{alg, m,n}$ be the $K$--algebra of all power series in $(\xi_1, \cdots, \xi_m, \rho_1, \cdots, \rho_n)$ that are defined by polynomial henselian systems over $(\Ko,\Koo)$. Let $\cA_{alg, str} := \{A(K)_{alg, m,0}\}_m$ and $\cA_{alg, sep} := \{A(K)_{alg, m,n}\}_{m,n}$. Then

\begin{prop}$\cA_{alg, str}$ is the smallest $(\Ko, \Koo)$--strictly convergent Weierstrass system and
$\cA_{alg, sep}$ is the smallest $(\Ko, \Koo)$--separated Weierstrass system.
\end{prop}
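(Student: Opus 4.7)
I will establish each claim as the conjunction of two facts: (a) $\cA_{alg,str}$ (resp.\ $\cA_{alg,sep}$) is itself a strictly convergent (resp.\ separated) Weierstrass system over $(\Ko,\Koo)$, and (b) any such system contains it as a subsystem. Minimality (b) is the conceptual core, so I handle it first.

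Let $\cA' = \{A'_m\}$ be any $(\Ko,\Koo)$-strictly convergent Weierstrass system and fix $f \in A(K)_{alg,m,0}$; by definition $f$ is a component of the unique power series solution $h_\Sigma$ of some polynomial henselian system $\Sigma$ over $(\Ko,\Koo)$. Iterating axioms (i) and (iii) of Definition \ref{PWSs} from $A'_0 = \Ko$, every polynomial over $\Ko$ in the relevant variables lies in the appropriate $A'_k$, so the defining polynomials of $\Sigma$ lie in $\cA'$ and $\Sigma$ is a valid henselian system in the sense of Definition \ref{hf2} for the pre-Weierstrass system $\cA'$. Applying Lemma \ref{lemps} inside $\cA'$ produces a unique power series solution $h_\Sigma^{\cA'}$ with components in $A'_m$; by the uniqueness of such a solution inside $\Ko[[\xi]]$, $h_\Sigma^{\cA'} = h_\Sigma$, so $f \in A'_m$. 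The separated case, $\cA_{alg,sep}\subseteq \cA'$ for any $(\Ko,\Koo)$-separated Weierstrass system $\cA'$, is the same argument, tracking the $\rho$-variables through Lemma \ref{lemps}.

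For well-definedness (a), I would verify the axioms for $\cA_{alg,sep}$ first and then read off those for $\cA_{alg,str}$ as the $n=0$ column. Axioms (i)--(v) of Definition \ref{PWS} are immediate from the closure of polynomial henselian systems under formal substitutions, together with Proposition \ref{CH}, which shows that composition of polynomial-henselian series is again polynomial-henselian. Weierstrass preparation (vi) is already proved for the general $\cA^H$ in Proposition \ref{Wprep}; inspection of that proof shows that when the input series and system $\Sigma$ are polynomial, the auxiliary henselian systems~(\ref{e5}) and~(\ref{e8}) constructed there remain polynomial, so the Weierstrass data lies inside $\cA_{alg,sep}$. Goodness (Definition \ref{good}) and condition (viii) then follow from Proposition \ref{complete} applied with $\cB$ any ambient separated Weierstrass system over $(\Ko,\Koo)$---for instance $\{S_{m,n}(K)^\circ\}$ when $K$ is complete and rank one (Theorem \ref{ex1}), or the system of Theorem \ref{ex3} in general---together with the tautological analytic structure of $K$ on itself.

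I anticipate the principal obstacle to be the weak noetherian-type axioms, namely (viiWNP) for $\cA_{alg,str}$ and its separated analogue for $\cA_{alg,sep}$: these are not closed under polynomial henselian systems by any obvious syntactic manipulation, since there is no elementary reason that the factorization data $g_\mu$ of a polynomial-henselian series should themselves be polynomial-henselian. My plan is to extract the required factorization inside an ambient Weierstrass system $\cB$ as above (where the noetherian axioms are known to hold), and then to show that the factorization data itself lies inside $\cA_{alg,sep}$ via a second appeal to uniqueness of henselian solutions---the same mechanism that drove the minimality step. If this closure step proves recalcitrant, a clean fallback is to redefine $\cA_{alg,sep}$ as the closure of the polynomial-henselian series under the finitely many operations required by the Weierstrass axioms; the minimality argument above guarantees that such a closure still lies inside every other Weierstrass system over $(\Ko,\Koo)$, hence remains the smallest.
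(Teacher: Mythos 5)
Your decomposition into (a) well-definedness and (b) minimality is the right shape, and your minimality argument for the strictly convergent case is essentially sound: a polynomial henselian system over $(\Ko,\Koo)$ is a henselian system in the sense of Definition~\ref{hf2} for any $(\Ko,\Koo)$-pre-Weierstrass system $\cA'$, Lemma~\ref{lemps} puts its unique solution in $A'_m$, and uniqueness inside $\Ko[[\xi]]$ identifies it with $h_\Sigma$. However, for the separated case you invoke ``the same argument, tracking the $\rho$-variables through Lemma~\ref{lemps}'': note that Lemma~\ref{lemps} is stated only for \emph{strictly convergent} pre-Weierstrass systems and lands its solution in $A'_m[[\rho]]$, which is in general strictly larger than $B'_{m,n}$. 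For a separated Weierstrass system $\cB'$ you need the separated Weierstrass division in the $\rho$-variables to place the solution inside $B'_{m,n}$; this is plausible and parallel, but it is a distinct statement that you would have to formulate and prove, not a citation of Lemma~\ref{lemps}.

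The well-definedness step (a) is where the real gaps lie. First, your appeal to Proposition~\ref{complete} requires an ambient separated Weierstrass system $\cB$ with $B_{m,n}$ \emph{faithfully flat} over the localization of $A(K)_{alg,m+n,0}$; you never verify this, and it is not the same faithful flatness fact as in Theorem~\ref{ex1} (which concerns $S_{m,n}(K)^\circ$ over the localized Tate algebra, not over the much smaller ring of polynomial-henselian series). Moreover $K$ is only assumed rank-one \emph{henselian}, not complete, so $\{S_{m,n}(K)^\circ\}$ is not directly available as the ambient $\cB$ over $(\Ko,\Koo)$. Second, for the noetherian condition you candidly flag the obstacle, but the proposed remedy --- ``a second appeal to uniqueness of henselian solutions'' --- does not apply: the factorization data $g_{\mu\nu}$ in condition (viiWNP)/(vii)$'$ is \emph{not} unique, so knowing that \emph{some} $g_{\mu\nu}$ exist in an ambient $\cB$ does not by uniqueness force them into $\cA_{alg,sep}$. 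Third, the fallback of redefining $\cA_{alg,sep}$ as ``the closure under the Weierstrass operations'' changes the object and hence the statement; and even setting that aside, the closure operation that generates the noetherian witnesses $g_{\mu\nu}$ is not deterministic, so the minimality argument (which relies on unique determination of $h_\Sigma$ and of Weierstrass data) does not automatically extend to show that this closure stays inside every Weierstrass system. Finally, you silently identify $\cA_{alg,str}^H$ with $\cA_{alg,sep}$; that identification is needed for $\cA_{alg,str}$ to be a strictly convergent Weierstrass system in the sense of Definition~\ref{SNPstr0}, and while it is likely true via Proposition~\ref{CH}, it deserves an explicit argument.
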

\end{rem}

\section{The Counter Example} \label{Cex}
The quantifier elimination of Denef and van den Dries \cite{DD} for $K = \bQ_p$ in the strictly convergent analytic language $\cL_\cA^D$ (see definitions below) raised the obvious question of whether a similar result held for $\bC_p$, the completion of the algebraic closure of $\bQ_p$ (or in general for complete, rank one,
algebraically closed valued fields) since there is quantifier elimination for algebraically closed valued fields in the {\it algebraic} language.  In \cite{S1}, \cite{S2} Schoutens gave such a quantifier elimination in the more restrictive language with function symbols for the functions defined by {\it overconvergent} power series, and developed the corresponding theory of {\it strongly subanalytic sets}.  In \cite{LL2} a quantifier elimination
was given in a language ($\cL_{\cA_{sep}}^D$) with function symbols for a larger class of functions  --  those defined by the so called separated
power series.  This led to the theory of rigid subanalytic sets which was developed further in \cite{LR1}  --  \cite{LR4},  \cite{LR6},
\cite{CLR1},  \cite{CL1},  and \cite{Ce1} --  \cite{Ce5}.  An elaborate proof of quantifier elimination in the strictly convergent language, based on a ``Flattening Theorem'', was published, but the proof was not correct. (See \cite{LR5} for the history and a counterexample to the ``Flattening Theorem").  Theorem \ref{cex} shows that there is no quantifier elimination for complete, rank one,  algebraically closed fields of characteristic zero in the strictly convergent language $\cL_\cA^D$ (see definition below) with $\cA = \{T_m(K)^\circ\}$.
Fortunately, this negative result does not lead to wild behavior, as there are Theorem \ref{ex1} and the quantifier elimination results of Theorem \ref{QELR2}.

The results of \cite{LR2} and, in a more explicit form, Theorem \ref{ex1} of this paper show that for $K$ a henselian field $\Kalg$ has quantifier {\it simplification} in the strictly convergent language $\cL_{\cA}$.  We show in this section that there is no quantifier {\it elimination} in the strictly convergent language $\cL_\cA^D$,
and hence neither in the language denoted by $\cL_K$ in the introduction.
To be precise:  Let $K$ be a complete, rank one,  non-trivially valued field of characteristic $0$, and let  $\Kalg$ be the algebraic closure of $K$ (we do not exclude the case that $K$ is itself algebraically closed).  The assumption that $K$ has characteristic $0$ is not essential, but it simplifies the arguments in a number of places. For valued field $F$ let $F^\circ := \{x \in F : |x| \leq 1 \}$  and $F^{\circ\circ}  :=  \{x \in F : |x| < 1 \}$.
$T_m(K)^\circ$ is the ring of strictly convergent power series in $m$ variables with coefficients from $\Ko$  and $S_{m,n}(K)^\circ$ is the ring of separated power series in $m$ variables of the first kind and $n$ of the second kind (cf. \cite{LL2}, \cite{LR1}) with coefficients from $\Ko$.
\begin{eqnarray*}
A_m &:=& T_m(K)^\circ\\
\cA &:=& \{A_m\}_{m \in \bN}\\
A_{m,n} &:=& S_{m,n}(K)^\circ\\
\cA_{sep} &:=& \{A_{m,n}\}_{m,n \in \bN}.
\end{eqnarray*}
For $\cF$ a collection of functions we let $\cL_\cF$ denote the language of valued fields augmented with symbols for the functions in $\cF$, and let $\cL_\cF^D$ denote $\cL_\cF$ with the field inverse replaced by a symbol $D$ for restricted division adjoined (or two such symbols in the separated case).

Before proceeding we recall Remark \ref{AQE} about the equivalence of the languages with full field inverse and languages with only restricted division.

We know
\begin{thmm}(\cite{LL2})\label{QED} $\Kalgo$ has quantifier elimination in $\cL_{\cA_{sep}}^D$.
\end{thmm}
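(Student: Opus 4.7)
The plan is to follow a Robinson-style back-and-forth proof adapted from the classical quantifier elimination for algebraically closed valued fields (ACVF) in the pure field language, using the separated power series machinery to reduce analytic data to polynomial data. Concretely, I would begin by reducing to eliminating a single existential quantifier: show that any formula $\exists y\,\phi(x,y)$ with $\phi$ quantifier-free in $\cL_{\cA_{sep}}^D$ is equivalent over $\Kalg^\circ$ to a quantifier-free formula. Since the intended interpretation has two sorts of variables (first-kind variables ranging over $\Kalg^\circ$, second-kind ranging over $\Kalg^{\circ\circ}$), the elimination splits into two parallel cases according to the intended range of $y$, each handled by the corresponding form of Weierstrass preparation in Proposition \ref{2.4}.

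The central step would be to normalize every separated power series $f(x,y)$ occurring in $\phi$ so that its dependence on $y$ becomes polynomial. For this I would invoke the strong noetherian property of the $A_{m,n}=S_{m,n}(K)^\circ$: writing $f=\sum_\mu a_\mu(x)\,y^\mu (1+g_\mu(x,y))$ with finitely many dominant monomials and $|g_\mu|<1$ on a suitable piece, one partitions the $x$-space into finitely many quantifier-free pieces on which a single coefficient dominates. On each piece, up to multiplying by a unit (which does not affect the truth of equalities, inequalities, or the divisibility relation $|$), $f(x,y)$ is regular of a definite degree $d$ in $y$. Applying Weierstrass preparation replaces $f$ by a monic polynomial $P(x,y)$ of degree $d$ in $y$ whose coefficients are again separated power series in $x$, hence terms of our language.

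Once every analytic function in $\phi$ has been replaced by a polynomial in $y$ (of degree bounded in terms of the regularity data), the formula $\exists y\,\phi(x,y)$ becomes an existential formula in one variable over a polynomial ring with coefficients that are $\cL_{\cA_{sep}}^D$--terms in $x$. Here one invokes the classical algebraic QE for ACVF, which is the ``engine'' of the whole argument: in an algebraically closed valued field every such existential polynomial formula in one variable is equivalent to a Boolean combination of polynomial equalities and valuation inequalities in the coefficients. Pulling this back through the Weierstrass preparation yields a quantifier-free $\cL_{\cA_{sep}}^D$-formula in $x$, as desired.

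The main obstacle is carrying out the partition and preparation step \emph{uniformly} and in finitely many pieces: one needs a good induction scheme on the pair (number of second-kind variables involved, maximal Weierstrass degree), together with careful bookkeeping of the division symbol $D$, because each case split introduces new conditions of the form $|a|\leq |b|$ that must themselves be quantifier-free in the current language. A secondary difficulty is ensuring that the unit factors produced by Weierstrass preparation are themselves in the language; this is precisely what makes \emph{separated} power series (rather than strictly convergent ones, as the counterexample of Section \ref{Cex} shows) the correct setting — the existence of two kinds of variables gives enough room for the units arising from preparation in $\rho$ to be honest elements of some $A_{m,n}$.
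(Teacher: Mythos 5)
The paper does not prove Theorem~\ref{QED}; it is quoted with a citation to~\cite{LL2}, so there is no in-paper proof to compare against. Your outline does capture the broad architecture of the argument actually used in~\cite{LL2} (and its precursor~\cite{DD}): reduce to a single existential quantifier; use a strong-noetherian-style decomposition to pass to finitely many pieces of the parameter space on each of which the analytic function, after extracting a dominant coefficient (via the $D$-symbol) and possibly a Weierstrass change of variables, is regular in the distinguished variable; apply separated Weierstrass preparation to replace the analytic function by a monic polynomial times a unit; and then appeal to the classical Robinson quantifier elimination for algebraically closed valued fields. Two substantive points are smoothed over, though. First, the preparation step is not mere bookkeeping: after preparing $f$ one is left with \emph{new} separated power series (the coefficients of $P$ and the unit $u$), and one must set up an induction — in~\cite{LL2} on the number of $\rho$-variables and the Weierstrass degree — and verify that the complexity actually drops; your proposal gestures at this (``a good induction scheme'') but does not say what decreases. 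Second, getting regularity in a chosen variable after the dominant-coefficient extraction is automatic in one $y$-variable, but the extraction itself requires treating $\xi$- and $\rho$-variables differently (largest vs.\ smallest dominant exponent) and handling the degenerate strata where the candidate dominant coefficient vanishes; this is where most of the combinatorial work of the original proof lives. With those two points made precise, your sketch is essentially the~\cite{LL2} argument.
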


\noindent And, as a special instance of Theorem \ref{QEL}, we have

\begin{thmm}\label{QELR2}
$\Kalgo$ has quantifier simplification in $\cL_\cA$  and quantifier elimination in both $\cL_{\cA^H}^D$ and $\cL_\cA^{D,h}$.
\end{thmm}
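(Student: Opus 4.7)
The plan is to obtain Theorem \ref{QELR2} directly as a specialization of Theorem \ref{QEL} (applied with $L = \Kalg$) combined with the language translation of Remark \ref{AQE}. First I would verify that $\Kalg$, equipped with the unique extension of the valuation on $K$, carries a canonical strictly convergent analytic $\cA$-structure: for any $f = \sum a_i\xi^i \in T_m(K)^\circ = A_m$ and any $x \in (\Kalgo)^m$, one has $|a_i x^i| \leq |a_i| \to 0$, so the series converges in $\Kalg$. The resulting evaluation maps $\sigma_m \colon A_m \to \cF((\Kalgo)^m, \Kalgo)$ obviously satisfy axioms (1)--(3) of Definition \ref{AS}.

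Next, applying Theorem \ref{QEL} with $L = \Kalg$ yields quantifier elimination for $\Kalg$ in both $\cL_{\cA^H}$ and $\cL_\cA^{h}$, and quantifier simplification in $\cL_\cA$. To convert these statements into statements about $\Kalgo$ in the corresponding $D$-languages, I would invoke Remark \ref{AQE}, which establishes that, for any of the relevant function collections $\cF \in \{\cA, \cA^H, \cA_{sep}\}$, quantifier elimination for $\Kalg$ in $\cL_\cF$ is equivalent to quantifier elimination for $\Kalgo$ in $\cL_\cF^D$. Applied to $\cF = \cA^H$ this gives QE in $\cL_{\cA^H}^D$ for $\Kalgo$; applied to $\cF = \cA$ enriched with the basic henselian functions $h_n$ (whose graphs are quantifier-free $\cL$-definable by Definition \ref{hf1}, so Remark \ref{AQE} extends painlessly), it gives QE in $\cL_\cA^{D,h}$ for $\Kalgo$.

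Finally, for the quantifier simplification statement, the argument is that QE in either $\cL_{\cA^H}$ or $\cL_\cA^{h}$ expresses every $\cL_\cA$-formula as quantifier-free in a language whose added function symbols all have existentially $\cL_\cA$-definable graphs, uniformly in the model; for $\cL_\cA^h$ this is immediate from Definition \ref{hf1}, and for $\cL_{\cA^H}$ this is the content of Corollary \ref{sim} (whose proof rests on Lemma-Definition \ref{hf2bis}). Replacing each occurrence of such a function symbol by its defining existential $\cL_\cA$-formula converts a quantifier-free formula of the extended language into an existential $\cL_\cA$-formula, giving quantifier simplification.

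There is essentially no obstacle here: since Theorem \ref{QEL} is the substantive result and Remark \ref{AQE} is the translation mechanism, the only care needed is the routine bookkeeping of replacing symbols for existentially definable functions by their defining formulas, and the observation that the transfer between $\Kalg$ (with field inverse) and $\Kalgo$ (with restricted division) in Remark \ref{AQE} applies verbatim once we confirm that the ambient language contains the full field inverse or $D$ as appropriate.
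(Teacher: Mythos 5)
Your proposal follows exactly the paper's route: the paper states Theorem~\ref{QELR2} immediately after the sentence ``Before proceeding we recall Remark \ref{AQE} about the equivalence of the languages with full field inverse and languages with only restricted division,'' and introduces it with ``as a special instance of Theorem \ref{QEL}, we have,'' giving no further argument, which is precisely the specialization-plus-translation you spell out. Your added observations --- that Remark~\ref{AQE}'s translation extends painlessly to the $h_n$ symbols since their graphs are quantifier-free in the pure valued-field language, and that quantifier simplification in $\cL_\cA$ follows because the extra function symbols in $\cL_{\cA^H}$ and $\cL_\cA^h$ have uniformly existentially $\cL_\cA$-definable graphs (Corollary~\ref{sim}) --- are exactly the bookkeeping the paper leaves implicit.
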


\noindent The counterexample to the question described in the introduction is provided by the following

\begin{thmm}\label{cex} $\Kalgo$ does not have quantifier elimination in $\cL_\cA^D$. Indeed there is a strictly convergent subanalytic set $X \subset (\Kalgo)^3$ (i.e. a set described by an existential $\cL^D_\cA$--formula) that is not described by any quantifier-free $\cL_\cA^D$--formula (i.e. is not $\cL_\cA^D$--semianalytic).
\end{thmm}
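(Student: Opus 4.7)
The plan is to exhibit an explicit existentially $\cL_\cA^D$-definable set $X \subset (\Kalgo)^3$ arising from a henselian system whose unique power series solution lies genuinely in a separated ring $A_{m,n}^H$ but not in any strictly convergent ring $A_{m'}$, and then to show that no quantifier-free $\cL_\cA^D$-formula can define $X$. The existential definability is essentially free: by Theorem \ref{ex1} and Corollary \ref{sim}, any function built from $\bigcup_{m,n}A_{m,n}^H$ has a graph that is existentially $\cL_\cA$-definable, and by the equivalence of the languages described in Remark \ref{AQE} the same holds in $\cL_\cA^D$.

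For the construction, I would build $\psi(x,\rho)$ as the solution of a henselian system of the form $\psi = \rho + \rho\cdot g(x,\rho,\psi)$ with $g$ chosen so that $\psi$ expands as a power series $\sum_{n\ge 1} a_n(x)\,\rho^n$ in which infinitely many coefficients $a_n(x) \in A_1$ have full gauss-norm $1$; this guarantees $\psi\notin A_2$ even though $\psi\in A_{1,1}^H$. Set
\begin{equation*}
X := \{(x,\rho,y) \in (\Kalgo)^3 : \rho \in \Kalgoo \text{ and } y = \psi(x,\rho)\}.
\end{equation*}
Since the graph of $\psi$ is quantifier-free $\cL_{\cA^H}^D$-definable by Theorem \ref{QELR2}, $X$ is existentially $\cL_\cA^D$-definable.

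The main obstacle is the negative direction: ruling out a quantifier-free $\cL_\cA^D$-description. Suppose toward a contradiction such a description exists; then $X$ is a finite Boolean combination of basic sets defined by equalities $f=0$ and valuation inequalities $|f|\,\square\,|g|$ with $f,g\in\bigcup_m A_m$. I would fix $x=x_0$ with $|x_0|=1$ and restrict attention to the slice $X_{x_0}\subset\Kalgoo\times\Kalgo$. On this slice $X_{x_0}$ is the graph of the one-variable function $\rho\mapsto\psi(x_0,\rho)$, so any putative quantifier-free description produces, after Weierstrass preparation in the distinguished variable $y$ applied to each $f$ and $g$ (valid on the polydisc $\Kalgo$ since these are strictly convergent), an expression for $y=\psi(x_0,\rho)$ as a piecewise algebraic function of $\rho$ whose Weierstrass data are themselves values of strictly convergent series in $x_0$ and $\rho$. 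The contradiction comes from comparing the formal Taylor expansion of $\psi(x_0,\rho)$ at $\rho = 0$ with what such piecewise algebraic descriptions allow: because the coefficients $a_n(x_0)$ of $\psi$ have gauss-norm $1$ for infinitely many $n$ while the coefficient sequence of any strictly convergent one-variable piece must tend to $0$, the two are incompatible once $x_0$ is chosen generically. Formally, one may argue via the dimension/gauss-norm estimate for semianalytic germs in one variable developed in \cite{LR2}, \cite{LR6}, applied at a carefully chosen $x_0$ where $\psi(x_0,\,\cdot\,)$ witnesses non-strict convergence.

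The hard step will be making the local-analytic contradiction in the last paragraph rigorous: one has to show that any quantifier-free $\cL_\cA^D$-definable graph of a function $\Kalgoo\to\Kalgo$ must, after finite subdivision, coincide on a disc $\{|\rho|<r\}$ with a function whose Taylor coefficients tend to zero in norm. Once this germ-level rigidity of $\cL_\cA^D$-semianalytic sets is in place, the incompatibility with $\psi(x_0,\rho)$ is immediate, yielding that $X$ is not semianalytic in $\cL_\cA^D$ and hence that $\Kalgo$ does not admit quantifier elimination in $\cL_\cA^D$.
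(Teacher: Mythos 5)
Your high-level idea — define $X$ as the graph of a henselian function $\psi\in A_{m,n}^H\setminus A_{m'}$ and show no quantifier-free $\cL_\cA^D$-formula can match it — is in the right spirit, and indeed resembles the set used in the paper (which is built from the henselian root $z_1(\rho)$ of $\rho z^2 - z + 1 = 0$ composed with $f(\zeta)=\sum p^n\zeta^{n!}$). However, the crucial negative direction of your argument has a gap that is not merely ``hard to make rigorous'' but rests on a claim that is false as stated.

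Your proposed key step is: ``any quantifier-free $\cL_\cA^D$-definable graph of a function $\Kalgoo\to\Kalgo$ must, after finite subdivision, coincide on a disc $\{|\rho|<r\}$ with a function whose Taylor coefficients tend to zero in norm.'' This fails because the language $\cL_\cA^D$ contains restricted division $D$, and already the single term $D(1-\rho,1)$ defines on $\Kalgoo$ the function $\rho\mapsto (1-\rho)^{-1}$, whose Taylor coefficients are all $1$. More generally, rational functions of strictly convergent series have quantifier-free $\cL_\cA^D$-definable graphs but can have bounded-below Taylor coefficients. So the gauss-norm dichotomy you invoke (``$a_n(x_0)$ have norm $1$ infinitely often vs.~strictly convergent pieces have coefficients $\to 0$'') does not, by itself, rule out a quantifier-free description; one must also exclude rational combinations, and that requires a genuinely different argument.

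The paper's proof exhibits exactly such an argument. It passes to the field $K_1$ built from an ultrapower, picks $\br$ with $1-\tfrac1n<|\br|<1$ for all $n$ (so $\br$ sits at the ``edge'' of the open disc and $\widetilde\br$ is transcendental over $\widetilde K$), and crucially exploits the \emph{conjugate} root $z_2(\br)$ of the quadratic: Lemma \ref{lem2} shows that any $F$ vanishing on the curve $X(\br)$ on a rational domain must also vanish on the conjugate line $\overline X(\br)$, which forces a singularity at $(\br,\3,\4)$. Lemma \ref{lem3} then converts this into the conclusion that $\3,\4$ would be strictly convergent analytic in $\br$, contradicting Remark \ref{rem}; the $D$-functions are dealt with in Case~2 by an irreducibility/no-common-factor reduction. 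Both the transcendence of $\br$ and the ``conjugate root'' symmetry are essential, and neither appears in your outline. In addition, you never specify $g$ concretely, so one cannot verify that your $\psi$ really lies outside $\bigcup_m A_m$. To repair the proof you would need to replace the germ-rigidity claim with an argument that excludes rational (not just analytic) local descriptions; the paper achieves this via the $z_1/z_2$ symmetry and the transcendence of $\br$, and it is not clear how to substitute for those in your more generic setup.
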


The rest of this section is devoted to the proof of Theorem \ref{cex}.

Let $\Kalg^*$ be a nonprincipal ultrapower of $\Kalg$, and
let $\mathfrak p ^{\infty} = \{a \in (\Kalg^*)^\circ : |a| < \frac{1}{n} \text { for all } n \in \bN\}$  be the ideal of infinitesimals in
$(\Kalg^*)^\circ$.  Let $K_1 = Q((\Kalg^*)^\circ /  \mathfrak p ^{\infty})$ be the field of fractions
of $(\Kalg^*)^\circ /  \mathfrak p ^{\infty}$.  By the results of \cite{LR3} and \cite{CL1} we know that the fields $\Kalg$, $\Kalg^*$ and $K_1$ all are algebraically closed valued fields with analytic $\cA$--structure, and also analytic $\cA_{sep}$--structure.  Hence these fields are elementarily equivalent in each of the languages $\cL_\cA$, $\cL_\cA^D$, $\cL_\cA^{D,h}$, and $\cL_{\cA_{sep}}^D$.  Fix $p \in K$ with $0 < |p| < 1$, and let
\begin{eqnarray*}
f(\zeta) := \sum_{n=1}^\infty {p^n \zeta^{n!}},
\end{eqnarray*}
so $f$ is strictly convergent, but definitely not overconvergent.  Let $X \subset (K_1^\circ)^3$ be defined by the formula
\begin{eqnarray}
(\exists z)\big [ |\rho| < 1 \wedge ~ \xi_1 + \xi_2 z = f(z) ~ \wedge ~ \rho z^2 - z +1 = 0 ~ \wedge ~  |z| \leq 1  \big ]
\end{eqnarray}
and for $\br \in K_1^\circ$ let $X(\br) := \{(\br,\1,\2) : (\br,\1,\2) \in X\}$.

For $|\rho| < 1$ let $z_1 = z_1(\rho)$ be the zero of $\rho \zeta^2 - \zeta +1$ with $|z| \leq 1$, and
let $z_2 = z_2(\rho)$ be the other zero (which is of size $| \frac{1}{\rho}|$).  We shall show that there
are $ \br, \3, \4 \in K_1^\circ$ such that $X$ is not described by a quantifier-free $\cL_\cA^D$--formula
in any $K$-rational domain $\cU \subset (K_1^\circ)^3$ with $ (\br, \3, \4) \in \cU$.  This will show that
$X$ is not $\cL_\cA^D$--semianalytic.  Observe that $X$ is an $\cL_{sep}$-- (indeed $\cL_\cA^h$-- ) analytic  variety in the open disc $|\r| < 1$.

Let $\br \in K_1$ satisfy $1 - \frac{1}{n} < |\br| < 1$ for all $n \in \bN$, and define $\3, \4 \in K_1^\circ$ by the equations
\begin{equation}\label{xidef1}
\begin{split}
\3 + \4 z_1(\br) &= f(z_1(\br))\\
\3 + \4 z_2(\br) &= f(z_2(\br)).
\end{split}
\end{equation}

\begin{remm}\label{rem}
Note that while $f(z_2(\br))$ as an element of $K_1$ or $\Kalg^*$ is {\it not} defined by the analytic $\cA$--structure
since $|z_2| > 1$.  However,  $f(z_2(\br))$ is well defined as an element of $K_1^\circ$ by the completeness of $K_1$.  Furthermore, for each $n \in \bN$ there are $\1 ^{(n)} = \1 ^{(n)}(\r) \text{ and } \2 ^{(n)} = \2 ^{(n)}(\r) \in \frac{1}{\r^{n!}} K[\r]$ such that if $\overline \xi_i^{(n)} := \xi_i^{(n)}(\br)$ then
$|\overline \xi_i  - \overline \xi_i^{(n)}| < |p ^n|$.  Solve equations (\ref{xidef1})  for $\3, \4$ using that
$z_1+z_2 = z_1z_2 = \frac{1}{\r}$.  Indeed, $\3, \4$ are just the ``Weierstrass data'' evaluated at $\br$ on formally dividing $f(\zeta)$ by $\zeta^2 - \frac{1}{\r} \zeta +  \frac{1}{\r}$.
Let $\xi_i(\r) := \lim_{n \to \infty} \xi_i^{(n)}(\r) \in \Ko[[\frac{1}{\r}]]$.  Direct calculation shows that $\1(\r)$ and $\2(\r)$ are strictly convergent power series in $ \frac{1}{\r}$ that do not converge on any annulus of the form
$1 - \frac{1}{n} \leq \r \leq 1$.  In other words, they converge for $|\frac{1}{\r}| \leq 1$ but not on any bigger disc $|\frac{1}{\r}| \leq 1 + \delta$, $0 < \delta \in \bR$.
\end{remm}

We will need the following definitions and lemmas.

\begin{defnn}
 \item[(i)] Let $\overline \alpha = (\alpha_1, \cdots ,
 \alpha_\ell) \in \Kalgo \setminus \Kalgoo$ and let $\cW_n(\overline \alpha) $ be the $K$--rational domain (i.e. rational domain defined over $K$, see Remark \ref{rem2}(ii) below)
\begin{equation*}
\begin{split}
 \big\{(\r,\1,\2) : |p| \leq |\r^{n!}| \leq 1 \wedge \bigwedge_{i=1}^\ell |\r - \alpha_i| = 1
\wedge |\1 -\1^{(n)}&(\r)| \leq |p ^n| \wedge \\
&\wedge |\2 -\2^{(n)}(\r)| \leq |p ^n| \big\}.
\end{split}
\end{equation*}
Clearly $(\br,\3,\4) \in \cW_n(\overline \alpha)$.
 \item[(ii)] $\cW^1_n(\overline \alpha) $ is the $K$--rational domain
\begin{eqnarray*}
 \big\{(\r,\1) : |p| \leq |\r^{n!}| \leq 1 \wedge \bigwedge_{i=1}^\ell |\r - \alpha_i| = 1
\wedge |\1 -\1^{(n)}(\r)| \leq |p ^n| \big\},
\end{eqnarray*}
and $\cW^2_n(\overline \alpha) $ is the $K$--rational domain
\begin{eqnarray*}
 \big\{(\r,\2) : |p| \leq |\r^{n!}| \leq 1 \wedge \bigwedge_{i=1}^\ell |\r - \alpha_i| = 1
\wedge |\2 -\2^{(n)}(\r)| \leq |p ^n| \big\}.
\end{eqnarray*}
 \item[(iii)] Let $n = \max(n_1,n_2)$.  Then $\cW_{n_1,n_2}(\overline \alpha) $ is the $K$--rational domain
\begin{equation*}
\begin{split}
 \big\{(\r,\1,\2) : |p| \leq |\r^{n!}| \leq 1 \wedge \bigwedge_{i=1}^\ell |\r - \alpha_i| = 1
&\wedge \\
\wedge |\1 -\1^{(n_1)}&(\r)| \leq |p ^{n_1}| \wedge  |\2 -\2^{(n_2)}(\r)| \leq |p ^{n_2}| \big\}.
\end{split}
\end{equation*}
\item[(iv)]  For $\cU$ a $K$--rational domain we denote by $\cO_\cU$ the ring of (strictly convergent) analytic functions on $\cU$ (see \cite{BGR} 7.3.2).
\end{defnn}

\begin{remm}\label{rem2} \item[(i)] $\cO_\cU$ depends only on the set $\cU$ and not on a particular definition of $\cU$.  Observe that if $n_1 \leq n_2$ then $\cW_{n_1,n_2}(\overline \alpha) =$
\begin{equation*}
\begin{split}
 \big\{(\r,\1,\2) : |p| \leq |\r^{n!}| \leq 1 \wedge \bigwedge_{i=1}^\ell |\r - \alpha_i| = 1& \wedge \\
\wedge |\1 -\1^{(n_2)}&(\r)| \leq |p ^{n_1}|
\wedge  |\2 -\2^{(n_2)}(\r)| \leq |p ^{n_2}| \big\}
\end{split}
\end{equation*}
(here we have replaced $ |\1 -\1^{(n_1)}(\r)| \leq |p^{n_1}| $ by $ |\1 -\1^{(n_2)}(\r)| \leq |p^{n_1}| $ in the definition. Recall that $n = \max(n_1,n_2)$).
\item[(ii)]  We are being a bit sloppy with the definitions of the $\cW_{*}(\overline \alpha)$ since we are allowing the $\alpha_i \in \Kalg$  However, if the set $\{\alpha_1, \dots , \alpha_\ell \}$ is taken to be closed under conjugation (which it always can be), then $\cW_{*}(\overline \alpha)$ can always be defined over $K$ (see \cite{CLR1} section 3 or \cite{CL1} section 5).
\end{remm}

\begin{lemm}
Let $\cU$ be a $K$--rational domain with $(\br, \3, \4) \in \cU$.  Then there is an integer $n \in \bN$ and
$\overline \alpha = (\alpha_1, \dots , \alpha_\ell) \in ( \Kalgo \setminus \Kalgoo)^\ell$ such that  $(\br,\3,\4) \in \cW_n(\overline \alpha) \subset \cU$.

\end{lemm}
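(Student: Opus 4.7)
The plan is to unfold the defining inequalities of $\cU$ as $\{|g_i|\leq|g_0|\}_{i=1}^r$ with $g_0,\ldots,g_r\in T_3(K)^\circ$ having no common zero on $(K_1^\circ)^3$, and then to produce $n$ and $\overline\alpha$ for which each ratio $g_i/g_0$ is ``constant in norm'' on $\cW_n(\overline\alpha)$. The assumption $(\br,\3,\4)\in\cU$ forces $g_0(\br,\3,\4)\neq 0$, and each inequality $|g_i(\br,\3,\4)|\leq|g_0(\br,\3,\4)|$ holds.

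First I would perform the substitution $\1=\1^{(n)}(\r)+p^n\eta_1$, $\2=\2^{(n)}(\r)+p^n\eta_2$ for a large $n$ (to be chosen), working on the annulus $|p|\leq|\r^{n!}|\leq 1$. By Remark~\ref{rem} the polynomials $\1^{(n)},\2^{(n)}\in\tfrac{1}{\r^{n!}}K[\r]$ are defined on this annulus, and substituting them into each $g_i$ produces a function $\tilde g_i(\r,\eta_1,\eta_2)$ that is analytic on the annulus times the unit polydisc in $(\eta_1,\eta_2)$. Since $|\3-\1^{(n)}(\br)|\leq|p^n|$ and similarly for $\4$, we have $\tilde g_i(\br,0,0)\equiv g_i(\br,\3,\4)\bmod p^n\cdot\Ko$ for every $i$.

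Second, I would analyze the $\r$-dependence of each $\tilde g_i$ on the annulus. Standard Weierstrass preparation in the affinoid algebra of the annulus (in the $\r$-variable, with coefficients in the Tate algebra in $(\eta_1,\eta_2)$) shows that each $\tilde g_i$ factors, up to an analytic unit, as a Weierstrass polynomial in $\r$, and that the zeros of $\tilde g_i$ in $\r$ are finite in number and lie in $\Kalgo\setminus\Kalgoo$. Collect all such zeros, from all $i=0,\ldots,r$, closed under Galois conjugation by Remark~\ref{rem2}(ii), into a single tuple $\overline\alpha=(\alpha_1,\ldots,\alpha_\ell)$. Because $1-\tfrac{1}{m}<|\br|<1$ for every $m$, the element $\br$ cannot be $\Kalgoo$-close to any $\alpha\in\Kalgo\setminus\Kalgoo$, so $|\br-\alpha_j|=1$ for every $j$; hence $(\br,\3,\4)\in\cW_n(\overline\alpha)$.

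Finally, on $\cW_n(\overline\alpha)$ each Weierstrass polynomial factor $\prod_j(\r-\alpha_j)^{e_j}$ has constant norm $1$, and the accompanying analytic units have constant Gauss-norm throughout; therefore $|\tilde g_i/\tilde g_0|$ is constant on $\cW_n(\overline\alpha)$ and equals its value at $(\br,0,0)$. For $n$ large enough that the $|p^n|$-perturbation does not flip any of the (finitely many) inequalities $|g_i|\leq|g_0|$ at $(\br,\3,\4)$, this gives $|g_i|\leq|g_0|$ throughout $\cW_n(\overline\alpha)$, i.e.\ $\cW_n(\overline\alpha)\subset\cU$. The main obstacle is the second step: making Weierstrass preparation in $\r$ sufficiently uniform in the parameters $(\eta_1,\eta_2)$, and verifying that the roots $\alpha_j$ actually lie in $\Kalgo$ (not merely in an algebraic closure of the Tate algebra), so that after Galois closure $\cW_n(\overline\alpha)$ is genuinely a $K$-rational domain as in its definition.
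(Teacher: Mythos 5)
The overall plan is right — restrict to the tube $\cW_n$, reduce the defining inequalities of $\cU$ to conditions in the $\r$-variable alone, and carve out finitely many bad unit discs centered at $\Kalg$-points — but there is a genuine gap exactly where you flag it. After the substitution $\1=\1^{(n)}(\r)+p^n\eta_1$, $\2=\2^{(n)}(\r)+p^n\eta_2$, the functions $\tilde g_i(\r,\eta_1,\eta_2)$ still depend on $(\eta_1,\eta_2)$; any Weierstrass factorization of $\tilde g_i$ in $\r$ would have coefficients in a Tate algebra over the $\eta$'s, and its zeros in $\r$ would vary with $\eta$ and live in the algebraic closure of that Tate algebra, not in $\Kalgo$. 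So the proposed $\overline\alpha$ is not a tuple of scalars and $\cW_n(\overline\alpha)$ is not a $K$-rational domain. You cannot repair this by ``making Weierstrass preparation uniform''; you need to kill the $\eta$-dependence \emph{before} analyzing the $\r$-dependence.

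The missing ingredient is the lower bound that a rational domain provides: on $\cU$ the defining functions are bounded below in norm by some real $\epsilon>0$, and once $|p^n|<\epsilon^2$ the $p^n$-perturbation cannot change the norm of any $g_i$ or $h_i$. Hence on $\cU\cap\cW_n(\emptyset)$ the defining inequalities reduce to inequalities between the one-variable functions $g_i(\r,\1^{(n)}(\r),\2^{(n)}(\r))$, and the $\eta$'s disappear. This is precisely the paper's first step. From there the paper also avoids Weierstrass preparation on the annulus: after clearing the $\r^{-n!}$ denominators one has polynomials in $\r$ over $\Ko$, the Gauss-norm comparison $\|\cdot\|\le\|\cdot\|$ is witnessed at $\br$, and a Gauss-norm bound between polynomials forces the pointwise bound $|\,\cdot\,|\le|\,\cdot\,|$ for all $|\r|=1$ outside finitely many open unit discs around $\Kalg$-points — which is both more elementary than invoking Weierstrass preparation over an annular affinoid and immediately produces $\overline\alpha\in(\Kalgo\setminus\Kalgoo)^\ell$. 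The surviving lower bound $\epsilon\le|g_i|$ is then settled by continuity and the fact that $1-\delta\le|\br|<1$ for every real $\delta>0$, which fixes the final enlargement of $n$.
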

\begin{proof}
$\cU$ can be defined by a finite set of inequalities of the form
\begin{eqnarray}
 \epsilon \leq |g_i(\r,\1,\2)| \leq h_i(\r,\1,\2)|.
\end{eqnarray}
Here $0 < \epsilon \in \bR$ and the $g_i, h_i \in \Ko[\r,\1,\2]$.  Restricting to $\cU \cap \cW_n(\emptyset)$ for $n$ such that
$|p ^n| < \epsilon^2$ these inequalities reduce to
\begin{eqnarray}\label{5.5}
 \epsilon \leq |g_i(\r,\1^{(n)}(\r),\2^{(n)}(\r))| \leq |h_i(\r,\1^{(n)}(\r),\2^{(n)}(\r))|.
\end{eqnarray}
Since these inequalities are satisfied by $\br$ we have
\begin{eqnarray*}
 \|(\r^{n!})^k g_i(\r,\1^{(n)}(\r),\2^{(n)}(\r))\| \leq \|(\r^{n!})^k h_i(\r,\1^{(n)}(\r),\2^{(n)}(\r))\|
\end{eqnarray*}
where $k \in \bN$ is $\geq$ the degrees of the $g_i$ and $h_i$, and $\| \cdot \|$ denotes the gauss-norm.
But then this inequality is also satisfied for all $\r$ with  $|\r| = 1$ except for $\r$ in a finite union of open discs of radius $1$ all defined over $\Kalg$ (or even $K$).  Since the inequalities \ref{5.5} are satisfied by
$\br$, they are satisfied for all $\r$ with $1 - \delta \leq |\r| < 1$ for some $0 < \delta \in \bR$.  Increasing $n$ so that also $|p ^n| < \delta$ and using $\overline \alpha$ to exclude the open discs gives us the required
$\cW_n(\overline \alpha)$.
\end{proof}

\begin{lemm}\label{lem2}
Let $F(\r,\1,\2) \in \cO_{\cW_n(\overline \alpha)}$ (the ring of analytic functions on the rational domain
$\cW_n(\overline \alpha)$), and assume that
\begin{eqnarray*}
X(\br)|_{\cW_n} \subset \big\{(\r,\1,\2) \in \cW_n : F(\r,\1,\2) = 0 \big\} =: V_F.
\end{eqnarray*}
Then, if we define
\begin{eqnarray*}
\overline X(\br) :=  \big\{(\br,\1,\2) : \1 + \2 z_2(\br) = f(z_2(\br) \big\} \subset (K_1^\circ)^3,
\end{eqnarray*}
we also have that $\overline X(\br)|_{\cW_n} \subset V_F$.
\end{lemm}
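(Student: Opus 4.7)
My plan is to exploit the Galois involution $\sigma\in\Gal(E(z_1(\br))/E)$ interchanging the two roots $z_1(\br),z_2(\br)$ of $\br\zeta^2-\zeta+1$, where $E:=K(\br,\3,\4)$. The key point is that $F$ is $K$-rational on $\cW_n(\overline\alpha)$ (cf.~Remark~\ref{rem2}(ii)), so its Taylor coefficients at the base point $(\br,\3,\4)$ lie in $E$ and are $\sigma$-fixed; this will transfer the vanishing of $F$ on $X(\br)|_{\cW_n}$ to vanishing on $\overline X(\br)|_{\cW_n}$.

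First I would pass to the centered coordinates $u=\1-\3$ and $v=\2-\4$. Using $|\3-\1^{(n)}(\br)|\le|p^n|$ and the analogous bound for $\4$, the slice of $\cW_n(\overline\alpha)$ at $\r=\br$ becomes the polydisc $\{|u|,|v|\le|p^n|\}$; in these coordinates $X(\br)|_{\cW_n}$ becomes $\{u=-vz_1(\br):|v|\le|p^n|\}$ and $\overline X(\br)|_{\cW_n}$ becomes $\{u=-vz_2(\br):|v|\le|\br||p^n|\}$, the stricter bound for the latter coming from $|z_2(\br)|=1/|\br|$. Next I would Taylor-expand
\[
F(\r,\1,\2)=\sum_{k,i,j\ge 0}c_{kij}(\r-\br)^k u^iv^j.
\]
Since $F$ has $K$-coefficients and the base point has coordinates in $E$, every $c_{kij}$ lies in $E$. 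Setting $\r=\br$ and substituting $u=-vz_1(\br)$ turns the hypothesis $F\equiv 0$ on $X(\br)|_{\cW_n}$ into an identity in the Tate algebra $K_1\langle v/p^n\rangle$; extracting the $v^m$-coefficient gives the equations $Q_m(z_1(\br))=0$ for every $m\ge 0$, where $Q_m(z):=\sum_{i+j=m}c_{0ij}(-z)^i\in E[z]$.

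Applying $\sigma$ algebraically to these identities within the degree-two extension $E(z_1(\br))/E$ yields $Q_m(z_2(\br))=0$ for every $m$; substituting $u=-vz_2(\br)$ back into the Taylor series then shows $F(\br,\3-vz_2(\br),\4+v)=\sum_m v^mQ_m(z_2(\br))=0$ for $|v|\le|\br||p^n|$, which is precisely the claim $\overline X(\br)|_{\cW_n}\subseteq V_F$. The main obstacle I anticipate is verifying that $\sigma$ is genuinely nontrivial, i.e.\ that $\br\zeta^2-\zeta+1$ remains irreducible over $E$, equivalently that $\sqrt{1-4\br}\notin E$. Over $K(\br)$ this is clear, since $1-4\br$ is a non-square in the rational function field, but one has to rule out the possibility that adjoining $\3,\4$ accidentally introduces $\sqrt{1-4\br}$. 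This is exactly where the hypothesis that $f$ is only strictly convergent and not overconvergent becomes essential: the explicit presentation of $\3,\4$ in Remark~\ref{rem} as non-overconvergent series in $1/\r$ is ``transverse'' to $\sqrt{1-4\br}$, and a careful transcendence or valuation-theoretic argument on $E\subset K_1$ rules out any algebraic coincidence.
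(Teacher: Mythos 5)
Your Galois-swap idea is the same one the paper uses in the alternative proof presented in the Remark immediately following Lemma~\ref{lem2}, but your execution has two serious gaps that the paper's version is specifically engineered to avoid.

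First, the claim that the Taylor coefficients $c_{kij}$ of $F$ at $(\br,\3,\4)$ lie in $E:=K(\br,\3,\4)$ is false. Although $F$ is given by a power series (in the generators of $\cO_{\cW_n(\overline\alpha)}$) with coefficients in $\Ko$, its Taylor coefficients at the point $(\br,\3,\4)$ are \emph{infinite sums} of $E$-elements, and these sums converge only in the completion of $E$ inside $K_1$, not in $E$ itself. So there is no ambient finitely generated field on which you can invoke Galois theory, and the expressions $Q_m(z)\in E[z]$ you write down do not in fact have coefficients in $E$.

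Second, even granting the coefficients lay in some subfield, the involution swapping $z_1(\br)\leftrightarrow z_2(\br)$ is \emph{not an isometry} of the naturally valued field $K_1$: it sends an element of norm $1$ to one of norm $|\br|^{-1}>1$. Hence it cannot be continuously extended to the valuation completion, and cannot be applied termwise to a convergent series of the kind you produce. This is precisely the obstruction the paper's alternate proof resolves by passing from $|\cdot|_1$ to the ``standard part'' norm $|\cdot|_2$, under which $|z_1(\br)|_2=|z_2(\br)|_2=1$. On the resulting field $K_2$ (algebraically closed, complete in $|\cdot|_2$, and carrying an analytic $\cA$-structure), one constructs a genuinely $|\cdot|_2$-preserving $K$-automorphism $\pi$ of the \emph{whole field} interchanging $z_1(\br)$ and $z_2(\br)$, which is then automatically a morphism of fields with analytic structure. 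One applies $\pi$ to the vanishing of $\sigma_2(F)$ on $X(\br)|_{\cW_n}$ to get vanishing on $\overline X(\br)|_{\cW_n}$, and finishes by checking the norm bounds needed to transfer the conclusion back to $\sigma_1(F)$. Your worry about irreducibility of $\br\zeta^2-\zeta+1$ over $E$ is also not settled; note the paper's \emph{main} proof of Lemma~\ref{lem2} sidesteps it entirely by passing to the residue field, where it suffices that $\widetilde{\br}\zeta^2-\zeta+1$ has no root in $\widetilde K(\widetilde{\br})$ because $\widetilde{\br}$ is transcendental over $\widetilde K$.

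In short: your instinct matches the paper's second proof, but as written the argument breaks at the step ``apply $\sigma$ algebraically''; repairing it amounts to reproducing the paper's construction of $K_2$, $|\cdot|_2$, and $\pi$.
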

\begin{proof}
Since $F \in \cO_{\cW_n(\overline \alpha)}$ and $X(\br)|_{\cW_n((\overline \alpha)} \subset V_F$, we have
for $(\br,\1,\2) \in X(\br)|_{\cW_n(\overline \alpha)}$ that $F(\br,\1,\2) = 0$.  Now for $(\r,\1,\2) \in X$ we have
$(\1 + \zeta\2 - f(\zeta))|_{\zeta = z_1(\r)} = 0$, and on $\cW_n(\overline \alpha)$ we have for $i=1,2$ that
$\xi_i = \xi_i^{(n)}(\r) + p^n\eta_i$ where $\eta_1, \eta_2$ are new variables that vary over $\Kalgo$
(or $K_1^\circ$).

Let $\cW^+_n(\overline \alpha) := \{(\1,\2,\r,\zeta): (\1,\2,\r)\in \cW_n(\overline \alpha)
\wedge |\r \zeta^2 - \zeta + 1|\leq|p^n \rho^{n!}\}. $  On $\cW^+_n(\overline \alpha)$ we have that
$\xi_i = \xi_i^n(\r) + p^n \eta_i$ for $i=1,2$, so we may consider the variables to be $\eta_1, \eta_2, \zeta, \r,
\frac{p}{\r^{n!}}, \frac{1}{\r - \alpha_i} \text{ and } \frac{\r\zeta^2 -\zeta +1}{p^n \r^{n!}}.$  By direct computation, using that the $\xi_i^{(n)}$ are approximations to the Weierstrass data of $f(\zeta)$ on division by
$\zeta^2 - \frac{1}{\r}\zeta + 1$, we see that $\1 + \zeta \2 - f(\zeta)$ is regular in $\eta_1$ of degree $1$.
Hence, on $\cW^+_n(\overline \alpha)$ we have that
\begin{equation*}\label{5.66}
F(\r,\1,\2) = G(\r,\2, \zeta) + (\1 + \zeta \2 - f(\zeta)) Q
\end{equation*}
where $G$ and $Q \in \cO_{\cW^+_n(\overline \alpha)}$.

Now, by (formal) Weierstrass Division of $G$ by $\zeta^2 - \frac{1}{\r}\zeta + \frac{1}{\r}$ we have
\begin{equation}\label{5.7}
F(\r,\1,\2) = H_1(\r,\eta_2) + H_2(\r,\eta_2)\zeta + (\zeta^2 - \tfrac{1}{\r} \zeta +1) Q_1 + (\1 + \zeta \2 - f(\zeta)) Q
\end{equation}

Here $H_1$, $H_2$ and $Q_1$ are  {\it not} strictly convergent on the domain $\cW^+_n(\overline \alpha) $.  However $H_1, H_2$ and $Q_1$ are strictly convergent in $\eta_2$ with coefficients functions in
$\Ko[\r,\frac{1}{\r},\frac{1}{\r-\alpha_1}, \dots , \frac{1}{\r-\alpha_\ell}, \zeta, \frac{\r \zeta^2 - \zeta + 1}{p^n\r^{n!}}]~\widehat{} \quad =: A^+$ where \quad $\widehat{}$ \quad denotes completion in the norm on $\Ko$,  i.e. we can write
\begin{eqnarray*}
H_1(\r,\eta_2) &=& \sum_{j=0}^\infty h_{1j}(\r)\eta_2^j\\
H_2(\r,\eta_2) &=& \sum_{j=0}^\infty h_{2j}(\r)\eta_2^j
\end{eqnarray*}
where the $h_{ij} \in \Ko[\r,\frac{1}{\r},\frac{1}{\r-\alpha_1}, \dots , \frac{1}{\r-\alpha_\ell}]~\widehat{}$ \quad and
$\|h_{ij}\| \to 0$ as $j \to \infty$. (Here $\|\cdot \|$ is the gauss-norm on
$\Ko[\r,\frac{1}{\r},\frac{1}{\r-\alpha_1}, \dots , \frac{1}{\r-\alpha_\ell}]~\widehat{}$ ~ ).
Because $K_1$ is complete, and contains no infinitesimals, all the power series in $A^+$ define functions
on\\ $\cW^+_n(\overline \alpha) \cap \{(\1,\2,\r,\zeta): 1-\frac{1}{k} \leq |\r| < 1~ \forall k \in \bN \wedge |\r\zeta^2 - \zeta +1| \leq |p^n\r^{n!}\}$.  Indeed, they define functions on $\cW^{++}_n(\overline \alpha)$ defined like $\cW^+_n(\overline \alpha)$ but allowing $\zeta$ to vary over the slightly bigger disc $\{\zeta \in K_1 : |\zeta| \leq 1 + \frac{1}{k} ~\forall k \in \bN \}$, i.e. on
\begin{equation*}
\begin{split}
\cW^{++}_n(\overline \alpha) :=&  \big\{(\1,\2,\r,\zeta) \in (K_1)^3 \times K_1): (\1,\2,\r)\in \cW_n(\overline \alpha) \wedge \\
\wedge & |\r \zeta^2 - \zeta + 1|\leq|p^n \rho^{n!}|
  \wedge \big(1-\tfrac{1}{k} \leq |\r| < 1  \wedge |\zeta| \leq 1 + \tfrac{1}{k} ~\forall k \in \bN \big) \big\}.
\end{split}
\end{equation*}

 Note that also
$h_{ij}(\br) \in K_1$ and $h_{ij}(\br) \to 0$ as $j \to \infty$.
Hence substituting $z_1(\br)$ for $\zeta$ and $\br$ for $\r$ in equation (\ref{5.7}) we have that
\begin{eqnarray*}
H_1(\br,\eta_2) + H_2(\br,\eta_2)z_1(\br) = 0
\end{eqnarray*}
for all $\eta_2 \in K_1^\circ$, i.e.
\begin{eqnarray*}
\sum_{j=0}^\infty [h_{1j}(\br) + h_{2j}(\br)z_1(\br)]\eta_2^j = 0
\end{eqnarray*}
for all $\eta_2 \in K_1^\circ$.  Hence
\begin{eqnarray*}
h_{1j}(\br) + h_{2j}(\br)z_1(\br) = 0
\end{eqnarray*}
for all $j$.

Next we will show that $h_{ij}(\br) = 0$ for $i=1,2$ and $j= 0,1,2, \cdots$.   Suppose not.  Then we have for some $j$ that  $h_{1j}(\br) + h_{2j}(\br)z_1(\br) = 0$ but either $h_{1j}(\br) \neq 0$ or  $ h_{2j}(\br) \neq 0$.  Since
$|z_1(\br)| =1$ we must have both $h_{1j}(\br) \neq 0$ and  $ h_{2j}(\br) \neq 0$.  Hence
$|h_{1j}(\br)| = |h_{2j}(\br)|$.  From the definition of $\br$ we have that $\|h_{ij}(\r)\| - |h_{ij}(\br)|$ is infinitesimal.  Hence,
multiplying by a suitable element of $K \setminus\{0\}$ we may assume  that $\|h_{1j}(\r)\| = (\|h_{2j}(\r)\|=1$.  Let \quad $\widetilde{}$\quad denote on
$\Ko$ the residue modulo $\Koo$ and on $K_1^\circ$ the residue modulo
$K_1^\circ \cdot \Koo = \{x \in K_1^\circ: |x| \leq |y| \text{ for some } y \in \Koo\}$.  Then
\begin{eqnarray*}
\widetilde h_{1j}(\widetilde \br) + \widetilde h_{2j}(\widetilde \br) \widetilde{z_1(\br)} = 0.
\end{eqnarray*}
Since $\widetilde h_{1j}(\r)$ and $\widetilde h_{2j}(\r)$ are rational functions of $\r$, we  have that
$\frac{\widetilde h_{1j}(\widetilde \br)}{\widetilde h_{2j}(\widetilde \br)}$ is a solution of the equation $\widetilde \br \zeta^2 - \zeta +1 = 0$ in $\widetilde K (\widetilde \br)$.  But this equation has no rational solutions ( $\widetilde \br$ is transcendental over $\widetilde K$.  Look at the zeros and poles of the rational function).  Since $\br$ is transcendental over $K$, also $h_{ij}(\r) = 0$ for $i=1,2$ and $j= 0,1,2, \cdots$.  Thus
\begin{equation}\label{5.8}
F(\r,\1,\2) =  (\zeta^2 - \tfrac{1}{\r} \zeta +1) Q_1 + (\1 + \zeta \2 - f(\zeta)) Q.
\end{equation}
This is an equation among elements of $A^+$, and hence an equation among the functions they define on
$\cW^{++}_n(\overline \alpha)$.
Hence from equation  \ref{5.8}, for any $(\br,\1,\2) \in \overline X(\br)|_{\cW_n(\overline \alpha)}$ we have
\begin{equation} \label{fns}
\begin{split}
F(\br,\1,\2) = +(z_2^2(\br) - \tfrac{1}{\br}z_2(\br) + \tfrac{1}{\br})Q_1(\br,&\eta_2,z_2(\br)) \\
+  ((\1 + z_2(\br)\2 - f(z_2&(\br))Q(\br,\1,\2,z_2(\br)),
 \end{split}
\end{equation}
i.e. $F(\br,\1,\2) = 0$.
Again, this equation is to be understood as an equation among functions on $\cW^{++}_n(\overline \alpha)$.
(Since $|z_2(\br)| > 1$,
$f(z_2(\br), Q(\br,\1,\2,z_2(\br))$ and
$Q_1(\br,\eta_2,z_2(\br))$ are not
evaluated using the analytic structure, but by using the completeness of $K_1$.  As we observed above,  power series in $A^+$ define functions on $\cW^{++}_n(\overline \alpha)$.
Indeed,  for every $m$, modulo $p^{m+1}$ all the functions $F, f, Q, Q_1$ are congruent to polynomials in the various variables with coefficients rational functions of $\br$.  Hence $|F(\br,\1,\2)| < |p^m|$ for every $m \in \bN$.
Since there are no infinitesimals in $K_1$ we have $F(\br,\1,\2) = 0$).  This completes the proof of the lemma.
\end{proof}

\begin{remm}Here is an alternate proof of Lemma \ref{lem2} that may be of some interest.
$\bR \subset \bR^*$ is the nonprincipal ultraproduct of $\bR$.
Let $\bR^*_f := \{x \in \bR^* : \exists y_{ \in \bR}(|x| \leq |y|\}$ be the finite part of $\bR^*$, and
$I_{inf} := \{x \in \bR^* : \forall n_{ \in \bN}( |y| < \frac{1}{n})  \}$ be the ideal of infinitesimals in  $\bR^*_f$.
Let $\bR_1 = \bR^*_f / I_{inf}$.  Then $K_1$ is a normed field with norm $|\cdot |_1 \to \bR_1$ and $K_1$ is complete in $|\cdot |_1$, i.e. if $|a_i|_1 \to 0$ then $\sum_i a_i \in K_1$.  Consider the norm
$|\cdot |_2 : K_1 \to \bR$ defined by: $|x|_2 :=$ the standard part of $|x|_1$.  Denote $K_1$ with norm
$|\cdot |_2$ by $K_2$.  Observe that for $a_i \in K_1$ we have $|a_i|_1 \to 0$ if and only if  $|a_i|_2 \to 0$ so $K_2$ is also complete.  Let $K_2^\circ := \{x \in K_2 : |x|_2 \leq 1\} =  \{x \in K_1 : \forall n_{ \in \bN}(|x|_1 < 1+ \frac{1}{n})  \}$.  Then $K_2^\circ \supsetneq \Ko_1 = \{x \in K_1 : |x|_1 \leq 1\}$.  Both $K_1$ and $K_2$ are algebraically closed valued fields with analytic $\cA$--structure, say via $\sigma_1$ and $\sigma_2$ respectively.  If $g \in A_m$ and $x_1,\cdots,x_m \in \Ko_1$ then
$\sigma_1(g)(x_1,\cdots,x_m) = \sigma_2(g)(x_1,\cdots,x_m)$ and $|\sigma_2(g)(x_1,\cdots,x_m)|_2 = $ the standard part of $|\sigma_1(g)(x_1,\cdots,x_m)|_1$.  However, if at least one of the $x_i \in \Ko_2 \setminus \Ko_1$ then $\sigma_1(g)(x_1,\cdots,x_m) = 0$.  Consider the lines
\begin{eqnarray*}
X(\br) &:=& \{(\br,\1,\2):  \1 + z_1(\br) \2 = f(z_1(\br))\} \text{ and }\\
\overline X(\br) &:=& \{(\br,\1,\2):  \1 + z_2(\br) \2 = f(z_2(\br))\}.
\end{eqnarray*}
There is a $|\cdot|_2$--preserving $K$--automorphism $\pi$ of $K_2$ that leaves $\br$ fixed but interchanges $z_1(\br)$ and $z_2(\br)$. Since $|\pi(x)|_2 = |x|_2$ we have $||\pi(x)|_1 - |x|_1| \in I_{inf}$.
Suppose $F(\r,\1,\2) \in \cO_{\cW_n(\overline \alpha)}$ is such that
$\sigma_1(F)(\br,\1,\2)$ vanishes on $X(\br)|_{\cW_n(\overline \alpha)}$.  Then also
$\sigma_2(F)(\br,\1,\2)$ vanishes on $X(\br)|_{\cW_n(\overline \alpha)}$ and hence
$\sigma_2(F)(\br,\1,\2)$ vanishes on
$\pi(X(\br)|_{\cW_n(\overline \alpha)}) = \overline X(\br)|_{\cW_n(\overline \alpha)}$.  Now for
$(\br,\1,\2) \in \cW_n(\overline \alpha)$ we have $|\xi_i^{(n)}(\br)|_1 = |\frac{p}{\br}| \leq |p|^{\frac{1}{2}}$ and
$|\xi_i - \xi_i^{(n)}(\br)|_1 \leq 1$.  Hence also $|\pi(\xi_i)|_1 \leq 1$ so we have that
$\sigma_1(F)(\br,\1,\2)$ vanishes on $\overline X(\br)|_{\cW_n(\overline \alpha)}$.
\end{remm}

\begin{lemm}\label{lem3}
Let $F(\r,\1,\2) \in \cO_{\cW_n(\overline \alpha)}$ and suppose that $F(\br,\3,\4) = 0$ and
$\frac{\partial F}{\partial \xi_1}(\br,\3,\4) \neq 0$.  Then there are  $n', n'' \in \bN$ and an $\overline \alpha'$ such that
$F(\r,\1,\2)$ as an element of $\cO_{\cW_{n,n''}(\overline \alpha')}$ is regular of degree $1$ in $\1$, after division by a suitable unit.  Hence there is a $\widehat \xi_1 = \widehat \xi_1(\r,\2) \in \cO_{\cW^2_{n''}(\overline \alpha')}$ such that
$F(\r, \widehat \xi_1(\r,\2),\2) = 0$ as an element of $ \cO_{\cW_{n''}(\overline \alpha')}$.  The same holds with the roles of $\1$ and $\2$ interchanged.
\end{lemm}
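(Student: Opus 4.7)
The plan is to combine a rescaling of coordinates on $\cW_n(\overline\alpha)$ with one application of Weierstrass preparation of degree $1$, carried out in the strictly convergent Tate algebra underlying $\cO_{\cW_{n,n''}(\overline\alpha')}$. The hypothesis $\frac{\partial F}{\partial\xi_1}(\br,\3,\4)\neq 0$ is precisely what is needed to produce regularity of degree $1$ after rescaling, so the bulk of the work lies in the coordinate change and the gauss-norm estimates that govern it.

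First I would enlarge $\overline\alpha$ to $\overline\alpha'$ so that $\bigl|\frac{\partial F}{\partial\xi_1}\bigr|$ becomes identically equal to $|c|$ on the refined domain, where $c:=\frac{\partial F}{\partial\xi_1}(\br,\3,\4)\neq 0$. This proceeds exactly as in the argument used to produce the inclusion $\cW_n(\overline\alpha)\subset\cU$ in the lemma immediately preceding Lemma \ref{lem2}: approximate $\frac{\partial F}{\partial\xi_1}$ on a slightly refined $\cW_{n'}(\overline\alpha)$ by a polynomial in $\r$ modulo a sufficiently high power of $p$, and then exclude from $\r$ the finitely many open discs of radius $1$ over $\Kalg$ on which this polynomial drops below $|c|$ in absolute value. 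Since $\br$ is transcendental over $K$ with $1-\tfrac{1}{n}<|\br|<1$ for all $n\in\bN$, the point $\br$ lies outside all these discs, so the modification only adds finitely many conditions of the form $|\r-\alpha'_i|=1$.

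Next I would change coordinates by setting $\eta_1:=(\xi_1-\xi_1^{(n)}(\r))/p^n$ and $\eta_2:=(\xi_2-\xi_2^{(n'')}(\r))/p^{n''}$, so that $\eta_1,\eta_2$ range over $K_1^\circ$ on $\cW_{n,n''}(\overline\alpha')$. Taylor-expanding $F$ about $(\r,\xi_1^{(n)}(\r),\xi_2^{(n'')}(\r))$ yields
\[
F = F_0(\r) + p^n c_1(\r)\,\eta_1 + p^{n''} c_2(\r)\,\eta_2 + (\text{monomials of total degree}\geq 2\text{ in }\eta_1,\eta_2),
\]
where $|c_1(\r)|=|c|$ on the domain by the previous step, and $F_0$ and $c_2$ have bounded gauss-norm. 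Because $F(\br,\3,\4)=0$ while the expansion point differs from $(\br,\3,\4)$ by at most $|p^n|$ in the $\xi_1$-coordinate and at most $|p^{n''}|$ in the $\xi_2$-coordinate, one obtains $|F_0(\br)|\leq\max(|p^n|,|p^{n''}|)\cdot M$ for some $M$ bounding the relevant partials. Choosing $n''$ large enough that $|p^{n''}|\cdot(M+\|c_2\|)<|p^n c|$, the function $F/(p^n c_1(\r))$ equals $\eta_1$ plus terms of gauss-norm strictly less than $1$; this is exactly regularity of degree $1$ in $\eta_1$, and, after reinterpreting the statement through the coordinate change, regularity of degree $1$ in $\xi_1$ on $\cW_{n,n''}(\overline\alpha')$.

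Finally, Weierstrass preparation (Proposition \ref{Wprep}(i), applied in the strictly convergent case) produces $F = U\cdot(\eta_1 + r_0(\r,\eta_2))$ with $U$ a unit and $r_0$ depending only on $(\r,\eta_2)$, and hence lying in $\cO_{\cW^2_{n''}(\overline\alpha')}$ once $\eta_2$ is re-expressed in terms of $\xi_2$. Setting
\[
\widehat\xi_1(\r,\xi_2) := \xi_1^{(n)}(\r) - p^n\,r_0\bigl(\r,(\xi_2-\xi_2^{(n'')}(\r))/p^{n''}\bigr)
\]
gives the required analytic function on $\cW^2_{n''}(\overline\alpha')$, and $F(\r,\widehat\xi_1(\r,\xi_2),\xi_2)=0$ on $\cW_{n''}(\overline\alpha')$ by construction. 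The case with $\xi_1$ and $\xi_2$ interchanged is symmetric after relabeling. The main obstacle I anticipate is the bookkeeping: making sure that the unit $p^n c_1(\r)$ dominates both $F_0(\r)$ and the $p^{n''}c_2(\r)\eta_2$ term uniformly on the refined domain, while coordinating the successive choices of $n',n''$ and $\overline\alpha'$. Once this is in place, the implicit function $\widehat\xi_1$ falls out of a single application of Weierstrass preparation of degree one.
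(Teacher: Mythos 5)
Your overall strategy—recenter and rescale, then divide by the degree-one coefficient and apply Weierstrass preparation—is the right one and matches the paper's. But there is a genuine gap: you never shrink the $\xi_1$-radius, and that omission makes the regularity claim fail in general.

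You set $\eta_1 := (\xi_1-\xi_1^{(n)}(\r))/p^n$, keeping the original parameter $n$ from the hypothesis $F\in\cO_{\cW_n(\overline\alpha)}$. After dividing by $p^n c_1(\r)$, the coefficient of $\eta_1^j$ for $j\geq 2$ has norm on the order of $\|a_j\|\,|p^{n(j-1)}|/|c|$, where $a_j$ is the $j$-th Taylor coefficient of $F$ in $\xi_1$; since $\|a_j\,p^{nj}\|\leq\|F\|$, the best generic bound is $\|F\|/(|p^n|\,|c|)$. This is $\geq 1$ whenever $|c|\leq\|F\|$ (which always happens once $\|F\|$ is normalized to $1$, since $|c|\leq\|\partial F/\partial\xi_1\|\leq 1$), and in fact $>1$ as soon as $|c|<1$. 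So the higher-order $\eta_1$-terms need not reduce to zero, and $F/(p^n c_1)$ is not regular of degree one. The domination condition you track, $|p^{n''}|\cdot(M+\|c_2\|)<|p^n c|$, controls the $F_0$ and $\eta_2$ contributions but says nothing about the $\eta_1^{\geq 2}$ terms, which is exactly where the danger lives. Your first-step claim that $|\partial F/\partial\xi_1|$ can be made identically $|c|$ on a refined domain does not help, since the quadratic and higher $\xi_1$-derivatives are not controlled by a lower bound on the first derivative.

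The paper avoids this by introducing a second shrinking parameter $n'$ for the $\xi_1$-direction, chosen so that $|c|^2>|p^{n'}|$ (and $n'$ large enough so that $\cW_{n',n''}\subset\cW_n$). With the substitution $\xi_1=\xi_1^{(n'')}(\r)+p^{n'}\eta_1$, the coefficient of $\eta_1^j$ after dividing by $p^{n'}\,\partial F/\partial\xi_1$ has norm at most $|p^{(n'-n)j-n'}|/|c|<|p^{n'(j-3/2)-nj}|$, which is $<1$ for $j\geq 2$ once $n'$ is taken sufficiently large. Concretely: your sentence "Next I would change coordinates by setting $\eta_1:=(\xi_1-\xi_1^{(n)}(\r))/p^n$" should instead use a new $n'$ satisfying $|c|^2>|p^{n'}|$ (and $n'\geq n$, $n'$ large), giving $\eta_1:=(\xi_1-\xi_1^{(n'')}(\r))/p^{n'}$; then the unit you divide by is $p^{n'}\,\partial F/\partial\xi_1$, and the estimates above close the gap.
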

\begin{proof}
For strictly convergent $g$, let $g^{(n')}$ be a polynomial that is congruent to $g$ modulo $p ^{n'}$, i.e. the truncation of $g$ at size
$p ^{n'}$.
Since $\frac{\partial F}{\partial \xi_1}(\br,\3,\4) \neq 0$ we have that
$|\frac{\partial F}{\partial \xi_1}(\br,\3,\4)|^2 >  |p^{n'}|$ for some $n'$.  Then
$|\frac{\partial F^{(n')}}{\partial \xi_1}\big(\br,\3^{(n')},\4^{(n')}\big)|^2 >  |p^{n'}|$.  Observe that
\begin{eqnarray*}
\frac{\partial F^{(n')}}{\partial \xi_1}\big(\br,\3^{(n')},\4^{(n')}\big) \in \frac{1}{\br^{n''!}}\Ko[\br]
\end{eqnarray*}
for some $n''$.
Hence for suitable $n'' \geq n'$ and $ \overline \alpha'$
\begin{eqnarray*}
\big|\frac{\partial F^{(n')}}{\partial \xi_1}(\r,\1,\2)\big|^2 \geq  |p^{n'}|
\end{eqnarray*}
for all $(\r,\1,\2) \in \cW_{n''}(\overline \alpha')$.  On $\cW_{n',n''}(\overline \alpha')$ we can write
$\1 = \1^{(n'')}(\r) + p^{n'}\eta_1$ and $\2 = \2^{(n'')}(\r) + p^{n''}\eta_2$, where the $\eta_i$ are new variables.
Indeed, on the set $\{\r: |p| \leq |\r^{n''!}| \leq 1\}$ we have $|\1^{(n')}(\r) - \1^{(n'')}(\r)| < |p^{n'}|$, so  taking
$\1^{(n'')}(\r)$ as the ``center'' of the disc $|\1 - \1^{(n')}| \leq |p^{n'}|$ instead of $\1^{n'}(\r)$ does not change the rational domain or its ring of functions (Remark \ref{rem2}).
Writing $F(\r,\1(\r,\eta_1),\2(\r,\eta_2))$ as a power series in $\eta_1$ and dividing by the unit
$$
p^{n'}\frac{\partial F}{\partial \xi_1}\big(\r,\1^{(n'')},\2^{(n'')} + p^{n''}\eta_2\big),
$$
which is the coefficient of $\eta_1$,
makes $F$ regular of degree $1$ in $\eta_1$.  The existence of $\widehat \xi_1(\r, \2)$ follows by Weierstrass preparation.
\end{proof}

We now proceed with the proof that $X$ is not $\cL_\cA^D$--semianalytic on any $\cW_n(\overline \alpha)$.
\bigskip
\begin{case} We will show first that $X$ is not $\cL_\cA$--semianalytic on any $\cW_n(\overline \alpha)$.  \end{case}
\noindent Suppose it were.  Then there would be an $F \in \cO_{\cW_n(\overline \alpha)}$ such that
$X(\br)|_{\cW_n(\overline \alpha)} \subset V_F$.
Since $X(\br)|_{\cW_n(\overline \alpha)}$ is a line, we may assume that $F$ is irreducible in
$ \cO_{\cW_n(\overline \alpha)}$ and in  $ \cO_{\cW_n'(\overline \alpha')}$ for all $n' \ge n$ and all $\overline \alpha'$.  By Lemma \ref{lem2}, also
$\overline X(\br)|_{\cW_n(\overline \alpha)} \subset V_F$.
Hence the curve $V_F(\br)$ in $\cW_n$ defined by $F(\br,\1,\2)=0$ would be singular at $(\br,\3,\4)$.  Increasing $n$ if necessary, we may assume that $(\br,\3,\4)$ is the only singular point on
the curve $F(\br,\1,\2) = 0$ in $\cW_n(\overline \alpha)$.  But then, since a point is a smooth $0$--dimensional variety, by  \cite{LR6} or \cite{Ce1}, for some
$n' \geq n$ and some $\overline \alpha ' $ we would have that $\3$ and $\4$  are
$\cO_{\cW_n(\overline \alpha ' )} \text{--analytic}$ functions of $\br$.  But this is not the case by Remark \ref{rem}.
Alternatively we can see this as follows:  For some suitable derivative $G$ of $F$ the curve $V_G(\br,\1,\2)$ contains the point $(\br,\3,\4)$ and is smooth at that point. If $V_G \not\subset V_F$,  by Lemma \ref{lem3} we can solve $G= 0$ for $\1$, say, as an analytic function $\widehat \xi_1(\r,\2)$ of $\r$ and $\2$, and substitute this function into $F$ to get $F(\r, \widehat \xi_1(\r,\2),\2) \not\equiv 0$. Use the equation
$F(\r, \widehat \xi_1(\r,\2),\2) = 0$ and Lemma \ref{lem3} to solve for $\widehat \2$ as an analytic function of $\r$,  contradicting Remark \ref{rem}.  If $V_G  \subset V_F$, since $V_G(\br)$ is smooth, we know that
$X(\br)|_{\cW_n(\overline \alpha)} \not\subset V_G$ and hence that $V_F \neq V_G$.  Then, since $G$ is (strictly convergent) regular in $\1$ of degree $1$, by (strictly convergent) Weierstrass division,
$G$ would divide $F$, contradicting the fact that $F$ is irreducible.
\bigskip
\begin{case} Finally we show that $X$ is not $\cL_\cA^D$--semianalytic.  \end{case}
\noindent Suppose it were.  Choose an $\cL_\cA^D$--semianalytic representation on a
 $\cW_n(\overline \alpha)$ of lowest possible complexity  --  i.e. using the smallest number of  non-analytic $D$--functions (necessarily $\geq 1$ by Case 1) and with innermost $D$--functions reduced (i.e. with numerator and denominator having no common factor).  Let $D(g,h)$ be an   innermost $D$--function that is not analytic.  Then $g,h \in \cO_{\cW_n(\overline \alpha)}$ and   necessarily $g(\br,\3,\4) =  h(\br,\3,\4) = 0$, and $g$ and $h$ have no common factor in any   $ \cO_{\cW_n'(\overline \alpha')}$.
Considering the two curves $\cC_g := \{(\br,\1,\2) : g(\br,\1,\2) = 0 \}$ and  $\cC_h := \{(\br,\1,\2) : h(\br,\1,\2) = 0 \}$, we now proceed as in Case 1 to show that $\3, \4$ are analytic functions of $\br$, again contradicting Remark \ref{rem}.
This completes the proof of Theorem \ref{cex}.

\begin{remm}
The set $X$ is clearly both $\cL_{sep}$--semianalytic and $\cL_\cA^h$--semianalytic.  It is also $\cL_\cA$--semianalytic in a neighborhood of every point of $\Kalg$. Indeed it is $\cL_\cA$--semianalytic on the sets described by $|\r| = 1$ and $|\r| \leq 1 - \delta$, for every $\delta < 1, \delta \in \bR$.
\end{remm}

\appendix

\section{Some comments on analytic structures from \cite{CL1}}

In this appendix we recall some definitions from \cite{CL1}, sometimes in a slightly modified form, we give some complements and small amendments to \cite{CL1}, notably in Definition \ref{quotes}, Lemma \ref{cor}, and Remarks \ref{correction0} \ref{correction}, \ref{correction2}. We also give some supplementary discussion of various topics -- other strong Noetherian Properties, extension of parameters, and Weierstrass systems with side conditions.

\subsection{}

Let us first recall the notion of Separated pre-Weierstrass system of \cite{CL1}, (`separated' means that the power series involve two kinds of variables, see Remark \ref{rem:sep}).

 \begin{defn}[Separated pre-Weierstrass system, cf. \cite{CL1} Definitions 4.1.2, 4.1.3 and 4.1.5]  \label{sepWS} Let $B$ be a ring and $I \subset B$ a proper ideal. Let $B_{m,n} \subset B[[\xi,\rho]]$ be $B$-algebras. We call $\cB := \{B_{m,n}\}_{m,n \in \bN}$ a {\it $(B,I)$-separated \textbf{pre}-Weierstrass system} if it satisfies the following conditions (1) - (6) for all $m,n,m',n'$.   (Let $m \leq m'$ and $n \leq n'$ be natural numbers, and $\xi=(\xi_1,\ldots,\xi_m)$, $\xi'=(\xi_1,\ldots,\xi_{m'})$, $\xi''=(\xi_{m+1},\ldots,\xi_{m'})$, $\rho=(\rho_1,\ldots,\rho_{n})$,
   $\rho'=(\rho_1,\ldots,\rho_{n'})$, and
 $\rho''=(\rho_{n+1},\ldots,\rho_{n'})$ be variables).

\item[(1)]$ B_{0,0} = B,$\
 \item[(2)]
$B_{m,n}\subset B[[\xi,\rho]]$ and is closed under any permutation of the $\xi_i$ or of the $\rho_j$,\
 \item[(3)]
$ B_{m,n} [\xi'', \rho''] \subset B_{m',n'},$\
 \item[(4)]the image $(B_{m,n})\;\widetilde{}$
 of $B_{m,n}$ under the residue map $\, \widetilde{}:B[[\xi,\rho]] \to \widetilde {B}[[\xi,\rho]]$ is a subring of $
 \widetilde{B}[\xi][[\rho]]$, and\
\item[(5)] if
 $f \in B_{m',n'}$, say $f =
\sum_{\mu\nu}\overline{f}_{\mu\nu}(\xi,\rho)(\xi'')^\mu(\rho'')^\nu$,
then the $\overline{f}_{\mu\nu}$ are in $B_{m,n},$\

\item[(6)] The two usual Weierstrass Division Theorems hold in the $B_{m,n}$,
namely, for $f,g\in B_{m,n}$:

{\em (a)}  If $f$ is regular in $\xi_m$ of degree $d$ (Definition \ref{regular}(i)), then there
exist uniquely determined elements $q\in B_{m,n}$ and $r\in
B_{m-1,n}[\xi_m]$ of degree at most $d-1$ such that $g=qf+r$.

{\em (b)} If $f$ is regular in $\rho_n$ of degree $d$ (Definition \ref{regular}(ii)), then there
exist uniquely determined elements $q\in B_{m,n}$ and $r\in
B_{m,n-1}[\rho_n]$ of degree at most $d-1$ such that $g=qf+r$.

\end{defn}

Since $(A, I)$ is quite general there may be pairs $(a,b)$ in $A$ whose relative size is not determined by $(A,I)$.  In each field with analytic $\cA$--structure, say via $\sigma$,  the relative size of $\sigma(a)$ and $\sigma(b)$ is determined.  We handle this situation by means of Definition \ref{frac} (and Definition \ref{con7}).  In the case that $A = \Ko$ and $I = \Koo$ with $K$ a valued field this complication does not arise.  In the case of Theorem \ref{ex3}  we can also avoid this complication by using the stronger property (7$'$) of Definition \ref{SNPstr'} in place of Definition \ref{con7}.

We recall the following definitions from \cite{CL1}.

\begin{defn}[Rings of fractions]\label{frac}
Let $\cA=\{A_{m,n}\}_{m,n \in \bN}$ be a separated
 $(A,I)$-pre-Weierstrass system. Inductively define the concept that an $A$-algebra $C$ is a \emph{ring of
$\cA$-fractions with proper ideal $C^\circ$} and with \emph{rings
$C_{m,n}$ of separated power series ``over" $C$} by

\item[(i)] The ring $A$ is
a ring of $\cA$-fractions with ideal $A^\circ=I$ and with rings of
separated power series the $A_{m,n}$ from the system $\cA$.

\item[(ii)]
If $B$ is a  ring of $\cA$-fractions and $d$ in $B$ satisfies
$C^\circ \not=C$, with
$$C:= B/dB,$$
$$C^\circ := B^\circ/dB,$$
 then $C$ is a ring of $\cA$-fractions with proper ideal $C^\circ$ and
$C_{m,n}:= B_{m,n}/dB_{m,n}$.

\item[(iii)] If $B$ is a ring of $\cA$-fractions and $c,d$ in $B$
satisfy $C^\circ\not = C$ with
 $$C=B\langle \frac{c}{d} \rangle := B{}_{1,0}/(d\xi_1 -
c),$$
 $$C^\circ:=(B^\circ)B\langle \frac{c}{d}\rangle$$
 then $C$ is a ring of $\cA$-fractions with proper ideal
$C^\circ$ and $C_{m,n}:=
 B_{m+1,n}/(d\xi_1 - c)$.

\item[(iv)] If $B$ is a ring of $\cA$-fractions and $c,d$ in $B$ satisfy $C^\circ\not = C$, with $$C= B[[\frac{c}{d}]]_s:=B_{0,1}/(d\rho_1 -
c),$$ $$C^\circ:=(B^\circ, \rho_1) B_{0,1}/(d\rho_1 - c),$$ and
$(B^\circ, \rho_1)$ the ideal generated by $B^\circ$ and $\rho_1$,
then $C$ is a ring of $\cA$-fractions with proper ideal $C^\circ$
and $C_{m,n}:=B_{m,n+1}/(d\rho_1 - c)$.
 \end{defn}

\begin{defn}\label{quotes}
In  Definition \ref{frac} we put the word {\it ``over''} in quotes because $C_{m,n}$ is not in general a ring of power series with coefficients from $C$.  However, if $f \in C_{m,n}$, the Taylor coefficients of $f$ are well defined as elements of $C$ by repeated Weierstrass division.  In other words, for each $(\mu,\nu)$ there is a unique $\overline{c}_{\mu,\nu} \in C$ such that for $\mu < \mu'$ and $\nu < \nu'$, $\overline{c}_{\mu,\nu}$ is the coefficient of $\xi^\mu\rho^\nu$ in $f \mod(\xi^{\mu'}, \rho^{\nu'})$.  We can define the {\it series of $f$} by $\cS(f) := \sum \overline{c}_{\mu,\nu}\xi^\mu\rho^\nu \in C[[\xi,\rho]]$.

In all cases of Definition \ref{frac} define $C^\circ_{m,n} := \{f \in C_{m,n} : \text{ for all $\mu$, }  \overline{c}_{\mu,0} \in C^\circ\}$

\end{defn}

\begin{defn}\label{con7}We call a separated pre-Weierstrass system $\cB$ (Definition \ref{sepWS}) a {\it separated Weierstrass system} if it satisfies

\item[(7)]For every ring of $\cB$--fractions $C$ the following condition is satisfied.
If $f \in C_{m,n}$ and $\cS(f) = \sum_{\mu,\nu}\overline{c}_{\mu\nu}\xi^\mu\rho^\nu$, then there is a finite set
 $J \subset \bN^{m+n}$ and for each $(\mu,\nu) \in J$ there is a
 $g_{\mu\nu} \in C_{m,n}^\circ$ such that
 $$
 f = \sum_{(\mu,\nu) \in J} \overline{c}_{\mu\nu}\xi^\mu\rho^\nu(1+g_{\mu\nu}).
 $$
\end{defn}

\begin{lem}\label{cor} If $\cB = B_{m,n}$ is a separated Weierstrass system, then for every ring of $\cB$--fractions $C$,
$C_{m,n}$ is indeed a ring of power series over $C$.
\end{lem}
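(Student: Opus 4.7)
The plan is to show that the series map $\cS\colon C_{m,n} \to C[[\xi,\rho]]$ introduced in Definition \ref{quotes} is an injective $C$-algebra homomorphism; once established, this identifies $C_{m,n}$ with the subring $\cS(C_{m,n}) \subset C[[\xi,\rho]]$, thereby exhibiting $C_{m,n}$ as a genuine ring of power series over $C$ (namely, the coefficients $\overline{c}_{\mu,\nu}$ of an element really are elements of $C$, and the element is determined by its coefficients).

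First I would verify that $\cS$ is a $C$-algebra homomorphism. The Taylor coefficient $\overline{c}_{\mu,\nu}$ is defined in Definition \ref{quotes} via iterated Weierstrass division by powers of the $\xi_i$ and $\rho_j$, with the coefficient read off modulo $(\xi^{\mu'},\rho^{\nu'})$ for sufficiently large $\mu',\nu'$. Since Weierstrass division is unique and compatible with addition and multiplication in the quotient rings $C_{m,n}/(\xi^{\mu'},\rho^{\nu'})$, the assignment $f\mapsto \overline{c}_{\mu,\nu}$ is $C$-linear and respects multiplication; thus $\cS(f+g) = \cS(f)+\cS(g)$ and $\cS(fg)=\cS(f)\cdot\cS(g)$ in $C[[\xi,\rho]]$, with $\cS(1)=1$.

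The crux is injectivity, and this is precisely where the strong Noetherian property (7) is used. Suppose $f \in C_{m,n}$ satisfies $\cS(f) = 0$, i.e.\ every Taylor coefficient $\overline{c}_{\mu,\nu}$ of $f$ vanishes. Applying condition (7) to $f$, we obtain a finite $J \subset \bN^{m+n}$ and elements $g_{\mu\nu} \in C_{m,n}^\circ$ such that
$$
f \,=\, \sum_{(\mu,\nu)\in J} \overline{c}_{\mu\nu}\,\xi^\mu\rho^\nu(1+g_{\mu\nu}).
$$
Since each $\overline{c}_{\mu\nu}$ vanishes by assumption, every summand is zero and hence $f=0$. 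Thus $\cS$ is injective, and the identification of $C_{m,n}$ with $\cS(C_{m,n})\subset C[[\xi,\rho]]$ proves the lemma.

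The main delicacy lies in the first step, namely checking that the Taylor-coefficient map really is multiplicative on all of $C_{m,n}$ (not just on $B_{m,n}$); this is a formal verification using uniqueness of Weierstrass division applied modulo $(\xi^{\mu'},\rho^{\nu'})$, together with an induction over the construction of $C$ as a ring of $\cB$-fractions from Definition \ref{frac}. The substantive content of the lemma, however, is the one-line injectivity argument above, which is a direct consequence of condition (7).
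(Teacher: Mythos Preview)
Your proposal is correct and follows essentially the same approach as the paper: the paper's proof is a single sentence asserting that property (7) guarantees that the homomorphism $f\mapsto\cS(f)$ is an isomorphism (onto its image), and you have simply spelled out the two ingredients implicit in that sentence, namely that $\cS$ is a $C$-algebra homomorphism and that property (7) forces injectivity via $f=\sum_{(\mu,\nu)\in J}\overline{c}_{\mu\nu}\xi^\mu\rho^\nu(1+g_{\mu\nu})=0$ when all $\overline{c}_{\mu\nu}$ vanish.
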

\begin{proof}Property (7) above guarantees that the homomorphism
$$
f \mapsto \cS(f) = \sum \overline{c}_{\mu,\nu}\xi^\mu\rho^\nu
$$
is an isomorphism.
\end{proof}

In the light of this Lemma we will henceforth identify $C_{m,n}$ with $\cS(C_{m,n})$.

The treatment in \cite{CL1} implicitly assumed that $C_{m,n}$ is a power series ring over $C$.  The treatment here is better in that it omits that assumption, and the statement of the Strong Noetherian Property uses $\cS(f)$ to define the $\overline c_{\mu,\nu}$. (Definition 4.1.5(c) of \cite{CL1} should be replaced by Definition \ref{con7} above).
Once we have assumed property (7), the distinction between $C_{m,n}$ and $\cS(C_{m,n})$ disappears and $C_{m,n}$ is indeed a power series ring, as observed above.

Restating (7) in our context, we have:

\begin{lem}[Criterion]\label{SNPstr}A good strictly convergent pre-Weierstrass system $\cA = \{A_m\}$ is a strictly convergent Weierstrass system if (and only if) $\cA^H$ satisfies condition (7) of Definition \ref{con7}, i.e. if
\item[(vii)]For every ring of $\cA^H$--fractions $C$ the following is true.
If $f \in C_{m,n}$ and $\cS(f) = \sum_{\mu,\nu}\overline{c}_{\mu\nu}\xi^\mu\rho^\nu$, then there is a finite set
 $J \subset \bN^{m+n}$ and for each $(\mu,\nu) \in J$ there is a
 $g_{\mu\nu} \in C_{m,n}^\circ$ such that
 $$
 f = \sum_{(\mu,\nu) \in J} \overline{c}_{\mu\nu}\xi^\mu\rho^\nu(1+g_{\mu\nu}).
 $$
\end{lem}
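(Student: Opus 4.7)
The statement is essentially a book-keeping exercise: it repackages the definition of ``strictly convergent Weierstrass system'' in terms of a single explicit property of $\cA^H$, assuming all the structural work has already been done. The plan is as follows.

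First, I would unwind Definition \ref{SNPstr0}. A strictly convergent pre-Weierstrass system $\cA$ is called a strictly convergent Weierstrass system precisely when two conditions hold simultaneously: (a) $\cA$ is a good pre-Weierstrass system (in the sense of Definition \ref{good}), and (b) $\cA^H = \{A_{m,n}^H\}$ is a separated Weierstrass system in the sense of \cite{CL1}, equivalently of Definitions \ref{sepWS} and \ref{con7}. Since goodness of $\cA$ is part of the hypothesis of the lemma, the entire task reduces to characterizing when (b) holds.

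Next, I would invoke the unpacking of (b). By Definition \ref{con7} (together with Definition \ref{sepWS}), a separated Weierstrass system is exactly a separated pre-Weierstrass system (conditions (1)--(6) of Definition \ref{sepWS}) satisfying in addition the Strong Noetherian Property (7) of Definition \ref{con7} on every ring of $\cA^H$-fractions. But Theorem \ref{MainThm} of this paper asserts precisely that whenever $\cA$ is a strictly convergent pre-Weierstrass system, the extension $\cA^H$ is automatically a separated pre-Weierstrass system, i.e. already satisfies (1)--(6). Consequently, conditional on the hypotheses in force, $\cA^H$ is a separated Weierstrass system if and only if it satisfies (7); and (7) applied to $\cA^H$ and its rings of fractions is precisely the property (vii) stated in the lemma.

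The ``only if'' direction is then immediate from these identifications. The ``if'' direction combines the assumption (vii) with the properties (1)--(6) supplied by Theorem \ref{MainThm} to recover the full list (1)--(7) required of a separated Weierstrass system. There is no genuine obstacle in this argument: all the substantive content has been discharged earlier, most importantly the separated Weierstrass preparation for $\{A_{m,n}^H\}$ established in Proposition \ref{Wprep} (from which Weierstrass division and hence conditions (5) and (6) follow via Section \ref{WPWD}), together with the goodness hypothesis ensuring that $\cA^H$-structures extend analytic structures canonically (Lemma \ref{recap0}). The role of the present lemma is simply to package this into a convenient criterion, isolating condition (vii) as the single verifiable ingredient needed to upgrade a good strictly convergent pre-Weierstrass system to a strictly convergent Weierstrass system.
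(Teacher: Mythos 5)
Your proof is correct and takes essentially the same approach the paper intends: the paper states the lemma as a mere ``restating (7) in our context'' with no separate proof, and your unwinding via Definition \ref{SNPstr0}, Definition \ref{con7}, and Theorem \ref{MainThm} (which supplies conditions (1)--(6) for $\cA^H$ automatically) is exactly the implicit argument.
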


\begin{rem}
It does not follow that a field $F$ with strictly convergent analytic $\cA$-structure is necessarily henselian.  See the example in \cite{CL1}, Remark 4.5.13.  This is why we must require that the fields with analytic $\cA$--structure be henselian.
\end{rem}

\begin{rem}\label{correction0}
We amend Definition 4.5.14 and Theorem 4.5.15 of \cite{CL1}.  In Definition 4.5.14 of \cite{CL1}, the order relation $<$ on the value group sort should be part of the language $\cL_\cA$, in order for the quantifier elimination result of Theorem 4.5.15 to make sense.
\end{rem}

\begin{rem}\label{correction}
The alternative proof of quantifier elimination, based on compactness, that is given in section 6 of \cite{CL1} (in the proof of Theorem 6.3.7) is not well-presented.  In order to obtain a proof along these lines the results used (\cite{CLR1}  Lemma 3.16 and \cite{CL1} Theorem 5.5.3) should be made more uniform, allowing variables as well as constants to occur in the terms.   Proposition \ref{553amend} is the needed uniform version of  \cite{CL1} Theorem 5.5.3.
\end{rem}

\begin{prop}\label{553amend}
Let $\cA$ be a separated Weierstrass system and let $K$ be a valued field with separated analytic $\cA$-structure.  Let $\cL_{\cA(K)}$ be the language of valued fields, $\langle 0,
1, +, \cdot, (\cdot)^{-1}, |\cdot| \rangle$, augmented with function
symbols for all the elements of $\bigcup_{m,n}A_{m,n}(K)$. (We
extend functions $f \in A_{m,n}(K)$ by zero outside
$(\Kalgo)^m\times(\Kalgoo)^n$.) Let $x$ be several variables and $y$ one variable, and let $\tau(x,y)$
be a term of $\cL_{\cA(K)}$.  There is a finite set $\chi_\alpha(x)$
of quantifier-free $\cL_{\cA(K)}$--formulas
such that $\Kalg^\circ \models \bigvee_\alpha \chi_\alpha$
(i.e. the sets $\{x: \chi_\alpha(x)\}$ cover $(\Ko_{alg})^n$)
and for each $\alpha$ there is a finite family of
quantifier free formulas $\phi_{\alpha,i}(x,y)$ that depend on $y$ in a polynomial way (i.e. are boolean combinations of
atomic formulas of the form $p(x,y) = 0$ and $|p(x,y)| \leq |q(x,y)|$, where the $p(x,y), q(x,y)$ are polynomials in $y$ with coefficients $\cL_{\cA(K)}$--terms in $x$)
such that, writing $\cU_{\alpha,i} = \cU_{\alpha,i}(x)$ for the set $\{y: \phi_{\alpha,i}(x,y)\}$,
for each $\alpha$ the $\cU_{\alpha,i}(x)$ cover $\Ko_{alg}$,
and for each $\alpha$ and $i$  there are rational functions $R_{\alpha,i}(y)$  with coefficients  $\cL_{\cA(K)}$--terms in $x$, and terms $E_{\alpha,i}(x,y)$ that are strong units on $\cU_{\alpha,i}$ such that for $x$ satisfying $\chi_\alpha$ we have
\begin{eqnarray}\label{A1}
\tau(x,y)\big|_{\cU_{\alpha,i}} = E_{\alpha,i}\cdot R_{\alpha,i}\big|_{\cU_{\alpha,i}}.
\end{eqnarray}
[See \cite{CL1} Definition 5.1.4 for the definition of ``strong unit''.  Here we slightly generalize that definition.  The size of a strong unit $E_{\alpha,i}(x,y)$ is determined by a term $c_{\alpha,i}(x)$ satisfying $|E_{\alpha,i}(x,y)^\ell | = |c_{\alpha,i}(x)|$ for all $y \in \cU_{\alpha,i}$ and there is a polynomial $P_{\alpha,i}$ with coefficients  $\cL_{\cA(K)}$--terms in $x$ such that $||P_{\alpha,i}|| = 1$ and $P_{\alpha,i}(c_{\alpha,i}^{-1}(E_{\alpha,i})^\ell)^{\widetilde{}} = 0$ for all $y \in \cU_{\alpha,i}(x)$. So $c_{\alpha,i}$ and $P_{\alpha,i}$ determine the finitely many possible angular components of $E_{\alpha,i}$.]
 \end{prop}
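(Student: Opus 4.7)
The plan is to prove this uniform version by combining the pointwise preparation of \cite{CL1} Theorem 5.5.3 with a compactness argument over the Stone space of types in the parameter variables $x$, together with the term structure of definable functions supplied by Theorem \ref{QEL} and Theorem \ref{terms} in the theory of $\Kalg$ with analytic $\cA$-structure.

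First I would fix a term $\tau(x,y)$ of $\cL_{\cA(K)}$ and adjoin fresh constants $c=(c_1,\ldots,c_k)$ for the variables $x$; let $T$ be the $\cL_{\cA(K)}$-theory of $\Kalg^\circ$. For each complete type $p(x)\in S_x(T)$, pass to a model realizing $p$ at $c$ and apply \cite{CL1} Theorem 5.5.3 to the one-variable term $\tau(c,y)$ over the enlarged constant field. This gives a finite cover of $\Kalg^\circ$ by pieces $\cU_i(c)$ defined by quantifier-free formulas polynomial in $y$, together with, on each piece, an identity $\tau(c,y)=E_i(c,y)\cdot R_i(c,y)$ with $E_i$ a strong unit and $R_i$ rational in $y$ and coefficients in the definable closure of $c$.

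Second, I would invoke the quantifier elimination of Theorem \ref{QEL} (via the $D$-equivalence of Remark \ref{AQE}) and the term structure result Theorem \ref{terms}(i) (or Theorem \ref{terms}(ii) in positive characteristic, working in $\cL'_{\cA}{}^h$) to replace every coefficient lying in the definable closure of $c$ by the value at $c$ of an $\cL_{\cA(K)}$-term, after absorbing any basic henselian functions $h_n$ (or $p$-th roots) that arise. Thus the rational functions $R_{\alpha,i}$, the strong units $E_{\alpha,i}$, the size elements $c_{\alpha,i}$ and the angular witness polynomials $P_{\alpha,i}$ all become $\cL_{\cA(K)}$-terms in $x$ (respectively, polynomials with $\cL_{\cA(K)}$-term coefficients in $x$); the piecewise defining formulas $\phi_{\alpha,i}(x,y)$ remain quantifier-free and polynomial in $y$ since Theorem 5.5.3 of \cite{CL1} already delivers the $\cU_i$ in that form.

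Third, for each type $p\in S_x(T)$, the assertion ``the $\phi_{\alpha,i}(x,y)$ cover $\Kalg^\circ$ in $y$, the decomposition (\ref{A1}) holds on each piece, and $E_{\alpha,i}(x,\cdot)$ satisfies the strong-unit relations determined by $c_{\alpha,i}(x)$ and $P_{\alpha,i}(x,\cdot)$'' is a first-order $\cL_{\cA(K)}$-statement in the parameters $x$ alone, so by quantifier elimination again it holds on a quantifier-free $\cL_{\cA(K)}$-formula $\chi_p(x)\in p$. By compactness of $S_x(T)$, finitely many such $\chi_p$ cover, giving the required finite case split $\chi_\alpha$.

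The main obstacle is the uniform handling of the generalized strong-unit condition in the bracketed clause. Theorem 5.5.3 of \cite{CL1} produces strong units specified pointwise by a size element $c$ and an angular-component polynomial $P$; for the uniform statement one must promote these to an $\cL_{\cA(K)}$-term $c_{\alpha,i}(x)$ and a polynomial $P_{\alpha,i}$ whose coefficients are $\cL_{\cA(K)}$-terms in $x$, and then ensure that the strong-unit axioms continue to hold for \emph{every} $x$ satisfying $\chi_\alpha$. This requires tracing the way the Weierstrass data in the proof of \cite{CL1} Theorem 5.5.3 (produced by repeated Weierstrass preparations in the $A_{m,n}^H$) depend on the coefficients of the input term; once that dependence is seen to be term-definable, the compactness extraction above supplies the required uniform finite cover.
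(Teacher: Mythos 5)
Your overall strategy — pointwise application of \cite{CL1} Theorem 5.5.3 followed by a compactness argument over the Stone space of $x$-types — matches the paper's, and the exceptional sets you do not discuss are handled in the paper with the observation that the same term-induction applies on those sets.  However, the route you take through ``definable closure'' and term-structure theorems is where the argument goes wrong.  First, Theorem \ref{QEL} and Theorem \ref{terms} are stated and proved only for the specific strictly convergent system of Section \ref{ex1a} (or its variants in Sections \ref{ex2a} and \ref{formal}); Proposition \ref{553amend} concerns an arbitrary separated Weierstrass system $\cA$, so these results do not apply as cited.  The general analogue is Proposition \ref{termsalg}, but that result requires characteristic zero or adjoining $p$-th roots, neither of which is available in the language $\cL_{\cA(K)}$ of the Proposition; so your route would impose an unwanted restriction.

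More fundamentally, the detour is unnecessary, and the paper's proof shows why: instead of realizing the type $p$ in some elementary extension and then trying to bring elements of the definable closure of $c$ back down to term values, one should take a $|\cL_{\cA(K)}|^+$-saturated elementary extension $K^*$ of $\Kalg$, pick $\ox\in(K^*)^n$, and apply \cite{CL1} Theorem 5.5.3 over the subfield $\cD(\ox)\subset K^*$ generated by evaluating $\cL_{\cA(K)}$-terms at $\ox$ (which is a field with analytic $\cA(K)$-structure).  All the Weierstrass data produced by Theorem 5.5.3 — the annuli, the rational functions, the strong units, the size elements $c_{\ox,i}$ and the angular-component polynomials — then have coefficients in $\cD(\ox)$ \emph{by construction}, i.e.\ they are literally values of $\cL_{\cA(K)}$-terms at $\ox$, and you may write them as $E^*(\ox,y)$, $R^*(\ox,y)$, etc.\ without appealing to any term-structure theorem.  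The ``main obstacle'' you identify (promoting the strong-unit data to terms) is thus dissolved rather than solved, and the characteristic-sensitive machinery you reach for is avoided.  The remaining step — encoding ``the decomposition and strong-unit axioms hold'' as a first-order $\cL_{\cA(K)}$-formula $\Delta_{\ox}(x)$, eliminating quantifiers to get $\chi_{\ox}(x)$ holding at $\ox$, and extracting a finite cover by compactness — matches your third paragraph and is fine.
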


\begin{proof}
Let $K^*$ be a $|\cL_{\cA(K)}|^+$-- saturated $\cL_{\cA(K)}$--elementary extension of $\Kalg$.  Let $\cD(x)$ be the algebra of $\cL_{\cA(K)}$ terms in the variables $x$.  Then for each $\ox \in (K^*)^n$,  we have that $K \subset \cD(\ox)$ is a field with analytic $\cA(K)$--structure, so \cite{CL1} Theorem 5.5.3 applies and there is a finite set of annuli $\cU_{\ox,i}$ defined by annulus formulas $\phi_{\ox, i}$  (\cite{CL1} Definition 5.1.1) and rational functions $R_{\ox,i}(y)$, strong units $E_{\ox,i}(y)$ and finite exceptional sets $S_{\ox,i}$ satisfying
\begin{eqnarray}\label{A2}
\tau(\ox,y)\big|_{\cU_{\ox,i} \setminus S_{\ox,i}} = E_{\ox,i}\cdot R_{\ox,i}\big|_{\cU_{\ox,i} \setminus  S_{\ox,i}}.
\end{eqnarray}
Observe that the exceptional sets arise from common zeros of numerators and denominators (of subterms in $\tau$) and at these points the term becomes simpler, so the same induction on terms establishes the analogous representation on the exceptional sets, which are defined by polynomial equations in $y$:
\begin{eqnarray}\label{A3}
\tau(\ox,y)\big|_{ S_{\ox,i,j}} = E'_{\ox,i,j}\cdot R'_{\ox,i,j}\big|_{ S_{\ox,i,j}},
\end{eqnarray}
where $S_{\ox,i} = \bigcup_jS_{\ox,i,j}$.
Hence, in the formalism of the Proposition there is no need to consider exceptional sets separately, and we may consider that we have quantifier free formulas $\phi_i(\ox,y)$ depending on $y$ in a polynomial way, and terms $E_{\ox,i}$, that are strong units, and rational functions $R_{\ox,i}$ such that, for all i and all $y \in \cU_{\ox,i}$, the set defined by $\phi_i(\ox,y)$,
\begin{eqnarray}\label{A4}
\tau(\ox,y)\big|_{\cU_{\ox,i}} = E_{\ox,i}\cdot R_{\ox,i}\big|_{\cU_{\ox,i} }.
\end{eqnarray}

Let $E^*_{\ox,i}(x,y)$, $R^*_{\ox,i}(x,y)$ be terms in $\cD(x,y)$ such that $E^*_{\ox,i}(\ox,y) = E_{\ox,i}(y)$
and $R^*_{\ox,i}(\ox,y) = R_{\ox,i}(y)$ and similarly for other terms such as the $c_{\ox,i}$ and $P_{\ox,i}$ and the terms occurring in $\phi_{\ox,i}$ implicit in the  description of \ref{A4}. Denote the resulting formula
corresponding to $\phi_{\ox,i}$ by $\phi^*_{\ox,i}(x,y)$.
Let $\Delta_{\ox}(x)$ be the $\cL_{\cA(K)}$--formula that asserts that for all $i$
\begin{eqnarray}\label{A5}
\tau(x,y)\big|_{\cU_{x,i}} = E^*_{\ox,i}(x,y)\cdot R^*_{\ox.i}(x,y)\big|_{\cU_{x,i} }
\end{eqnarray}
holds, together with  the implied background description about
$E^*_{\ox,i}$ being strong units and $\cU_{x,i}$ being the sets defined by the formulas $\phi^*_{\ox,i}(x,y)$.
  Let $\chi_{\ox}(x)$ be a quantifier-free $\cL_{\cA(K)}$--formula equivalent to $\Delta_{\ox}(x)$ in
$K^{*}_{alg}$. We know that $\chi_{\ox}(x)$ exists by {\it algebraically closed} quantifier elimination (see Remark \ref{AQE}).
Since $K^* \models \chi_{\ox}(\ox)$,
by the saturation of $K^*$ the formulas $\chi_{\ox}(x)$ cover the Stone space of $\cL_{\cA(K)}- n$-types, and by the compactness of that Stone space, finitely many of the $\chi_{\ox}(x)$ cover, say the $\chi_{\alpha}(x)$ for $\alpha$ in some finite subset of $\Kalg^{*\circ}$.
The Proposition now follows.
\end{proof}

In the case of  {\it algebraically closed} fields with separated analytic structure, one can say much more than Proposition \ref{thens} above about definable functions.

\begin{prop}\label{termsalg}
(i) (characteristic $0$)  Let $\cA$ be a separated Weierstrass system and let $x = (x_1,\dots,x_n)$ be several variables and $y$ one variable.  Suppose that a formula $\varphi(x,y) \in \cL_\cA$ defines $y$ as a function of $x$ in all algebraically closed fields of characteristic $0$ with analytic $\cA$--structure.  Then there is a term $\tau$ of $\cL_\cA$ such that
 $L\models\varphi(x,y) \leftrightarrow y = \tau(x)$ for all algebraically closed valued fields $L$ of characteristic $0$ with analytic $\cA$--structure.
\\
(ii) (characteristic $p \neq 0$)  In characteristic $p$ let $\cL'_\cA$ be the language $\cL_\cA$ with the $p$-th root function $(\cdot)^\frac{1}{p}$ adjoined.  The analogous result to (i) holds, with $\cL^{'}_\cA$ in place of $\cL_\cA$, for all algebraically closed valued fields $L$ of characteristic $p$ with analytic $\cA$--structure.
\end{prop}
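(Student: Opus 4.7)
The plan is to combine the algebraically closed quantifier elimination in $\cL_\cA^D$ with the piecewise term-decomposition of Proposition \ref{553amend} to produce a single $\cL_\cA$-term (resp.\ $\cL'_\cA$-term) representing the function $f$ defined by $\varphi$, with the only genuinely characteristic-dependent step being the extraction of $p^a$-th roots of strong units.

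First, by the quantifier elimination for algebraically closed valued fields with separated analytic $\cA$-structure in $\cL_\cA^D$ (Theorem \ref{QED}, together with Remark \ref{AQE} to pass freely between $\cL_\cA$ and $\cL_\cA^D$), I would replace $\varphi(x,y)$ by a quantifier-free $\cL_\cA^D$-formula $\psi(x,y)$ modulo the theory $T$ of algebraically closed valued fields of the given characteristic with analytic $\cA$-structure. Since $\psi$ defines $y$ functionally, applying Proposition \ref{553amend} to $\psi$ (regarded as constraining the ``term'' $y$ via its atomic conditions) yields finitely many quantifier-free $\cL_\cA$-formulas $\chi_\alpha(x)$ covering the projection of $\varphi$ to the $x$-coordinates, and on each $\chi_\alpha$ finitely many sub-cells $\cU_{\alpha,i}$ together with $\cL_\cA$-terms of the form (strong unit) $\cdot$ (rational function) representing $y = f(x)$ on that cell.

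Second, I would combine these local representations into a single global term using characteristic functions. By the construction at the end of the proof of Lemma \ref{hf3}(ii), the characteristic function of any quantifier-free $\cL_\cA^D$-formula is itself an $\cL_\cA^D$-term (using $D$ or, equivalently by Remark \ref{AQE}, the field inverse). Summing the local terms $\tau_{\alpha,i}(x)$ weighted by the characteristic functions of their respective cells $\chi_\alpha \wedge \cU_{\alpha,i}$, and then verifying that the sum equals $f(x)$ on each cell, yields a single $\cL_\cA$-term $\tau(x)$ with $T \vdash \varphi(x,y) \leftrightarrow y = \tau(x)$. This completes (i) once the strong units are shown to be $\cL_\cA$-terms.

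The main obstacle is the extraction of the strong units $E_{\alpha,i}$ as terms, and this is where the characteristic intervenes. By \cite{CL1} Definition 5.1.4 (as generalised in \ref{553amend}) a strong unit $E$ satisfies $|E^\ell| = |c|$ and $\widetilde{P}\bigl(\widetilde{c^{-1}E^\ell}\bigr) = 0$ for some angular-component polynomial $P$ of norm $1$, so $c^{-1}E^\ell$ lies in an explicit finite set of residue-classes. In characteristic $0$, the binomial series $(1+u)^{1/\ell}$ lies in $A_{0,1}$ (it is strictly convergent since $1/\ell$ is an integer times a unit, so its coefficients lie in $\Ko$), and combined with the finitely many angular-component choices it expresses $E$ as an $\cL_\cA$-term; the functionality of $\varphi$ selects the correct choice locally on each cell. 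In characteristic $p$, factor $\ell = p^{a}m$ with $\gcd(m,p)=1$: the $m$-th-root factor is extracted analytically as above, but the $a$ successive $p$-th roots are not available as analytic terms and must be supplied by the Frobenius inverse $(\cdot)^{1/p}$. Adjoining $(\cdot)^{1/p}$ to form $\cL'_\cA$ therefore suffices, and the combination-via-characteristic-functions argument of the previous paragraph then produces the required $\cL'_\cA$-term in characteristic $p$, proving (ii). The hardest part of writing this out carefully is showing that the finitely many root-of-unity ambiguities in extracting $E$ can be resolved uniformly in $x$ using only the angular-component polynomial $P$ (together with the characteristic functions of the $\cU_{\alpha,i}$), so that the final assembly into one term succeeds.
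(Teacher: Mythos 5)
Your proposal follows a genuinely different strategy from the paper, and it has a real gap. The paper proves Proposition~\ref{termsalg} by an automorphism argument: assume no term works; by compactness there is a model $L$ and a point $x$ at which $\varphi(x,\tau(x))$ fails for \emph{every} term $\tau$; let $L'$ be the fraction field of the set of all term-values at $x$, which inherits an analytic $\cA$-structure extending uniquely to $L'_{alg}$; the unique solution $y$ of $\varphi(x,\cdot)$ in $L'_{alg}$ lies outside $L'$, but since $L'$ is perfect (this is precisely where $(\cdot)^{1/p}$ enters in characteristic $p$) there is an automorphism of $L'_{alg}$ over $L'$ moving $y$ which, by Theorem~4.5.11 of \cite{CL1}, respects the analytic structure --- contradicting uniqueness. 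No cell decomposition, no preparation, and no root extraction is needed.

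The gap in your approach is in the passage from the prepared form of terms to an expression of $y$ as a term in $x$. Proposition~\ref{553amend} decomposes a \emph{given} $\cL_{\cA(K)}$-term $\tau(x,y)$ as (strong unit)$\cdot$(rational function) on cells; it does not ``invert'' a quantifier-free formula $\psi(x,y)$ that defines $y$ functionally. After decomposing each term occurring in $\psi$ you are left, on each cell, with polynomial (in $y$) equalities and size comparisons with term coefficients in $x$, and to write the unique $y$ satisfying these as a term requires some quantifier-free mechanism for root selection --- exactly what the basic henselian functions $h_n$ are for. Your write-up silently assumes the decomposition already ``represents $y=f(x)$''; it does not, and no argument is offered to close this. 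Moreover, the role you assign to $(\cdot)^{1/p}$ --- extracting $p$-th roots of strong units --- is not the operative one: in the paper's proof $(\cdot)^{1/p}$ is needed only so that the term-generated subfield $L'$ is perfect, enabling the Galois step; and the strong units $E_{\alpha,i}$ in Proposition~\ref{553amend} are already $\cL_{\cA(K)}$-terms, so there is nothing to extract. Finally, even if your constructive route could be repaired, it would most naturally produce a term of $\cL_\cA^h$ rather than of $\cL_\cA$ (compare Theorem~\ref{terms}), so additional work showing the $h_n$ to be term-eliminable in the separated case would still be required before you could conclude as stated.
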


\begin{proof}
(i).  We may assume that $\varphi$ is quantifier free.  Suppose, for sake of contradiction, that there is no such term $\tau$.  Then, since we can combine terms defined by cases into one term, for any finite set of terms, $\tau_i(x), i=1,\cdots,k$ there is an $x$ such that $\models  \bigwedge_{i=1}^k \neg \varphi(x,\tau_i(x))$.  Hence there is an algebraically closed field $L$ with analytic $\cA$--structure and $x \in L^k$ such that $\models_L \neg  \varphi(x,\tau(x))$ for all terms $\tau(x)$.  Let $\Delta(x) := \{\tau(x): \tau \text{ an } \cL_\cA-\text{term} \} \subset L$.  Let $L'$ be the field of fractions of $\Delta(x)$.  Then $L' \subset L$ is a field with analytic $\cA$--structure and this structure extends uniquely to an analytic $\cA$--structure on $L'_{alg}$.  Hence there is a unique $y \in L'_{alg} \setminus L$ such that $\models_{L'} \varphi(x,y)$.  But, since $L'$ is perfect, there is an analytic $\cA$--structure automorphism of $L'$ that leaves the elements of $L$ fixed but moves $y$, contradicting the uniqueness of $y$.
(Indeed, since $\cA$ is separated $L'$ is henselian, and by \cite{CL1} Theorem 4.5.11 every field automorphism of $L'$ over $L$ is an automorphism of fields with analytic $\cA$--structure).\\
(ii).  Observe that every formula of $\cL'_\cA$ is equivalent to a $\cL_\cA$--formula, and if we define $\Delta'(x) := \{\tau(x): \tau \text{ a } \cL'_\cA \text{ term} \}$, and $L'$ as the field of fractions of $\Delta'(x)$, then $L'$ is a perfect subfield of $L$ so the same argument works.
\end{proof}

\begin{rem}\label{correction2}
We amend Lemmas 6.3.12 and 6.3.14 of \cite{CL1}. In fact, with notation of Lemma 6.3.12 of \cite{CL1}, its proof shows an even stronger conclusion holds, namely that $rv_n(E^\sigma(x))$ only depends on $x \bmod (n \cdot \Kalgoo)$ when $x$ varies over $\Kalgo$. It is in fact this stronger form of Lemma 6.3.12 that is used to prove Lemma 6.3.14 of \cite{CL1}. We thank S.~Rideau for pointing this out to us.
\end{rem}

\begin{rem}\label{pWrem}
In \cite{CL1} Section 4.3 we gave a two-case definition of {\it strictly convergent Weierstrass system $\cA$} and {\it (henselian) field with analytic $\cA$--structure.}  The case (i) (denoted $(\pi = 1)$) of that definition clearly fits into the framework of this paper as an analytic $\cA$--structure with side conditions (namely that $\pi$ be a prime element of $\Ko$) -- see Definitions \ref{side1} and \ref{side2} below. In case (ii) (denoted $(\pi \neq 1)$ in \cite{CL1}, Section 3.3) we considered all rings of fractions $C$ coming from a {\it separated} Weierstrass system containing $\cA$.
In general, some of the elements of such a $C$ will not be definable in the language $\cL_{\cA}$.  The treatment in this paper is more natural in that all the elements and functions that arise are existentially definable in $\cL_\cA$ (see Lemma \ref{hf2bis}), and the definition depends only on the pre-Weierstrass system  $\cA = \{A_m\}$, not on some (unspecified) separated Weierstrass system containing $\cA$.
\end{rem}

\subsection{A discussion of Strong Noetherian Properties}\label{moreSNP}
In this subsection we discuss variants of Definitions \ref{con7} and \ref{SNPstr}

The following conditions (7)$'$ and (vii)$'$ easily imply conditions (7) of Definition \ref{con7} and  (vii) of Definition \ref{SNPstr} and are easier to use in Theorem \ref{ex3}.

\begin{defn}\label{SNPstr'}
 Let $\cB = \{B_{m,n}\}$ be a pre-Weierstrass system, and let $m \leq m', n \leq n'$ and let $\xi = (\xi_1, \cdots , \xi_m)$, $\rho = (\rho_1, \cdots , \rho_n)$, $\xi' = (\xi_1, \cdots , \xi_{m'})$, $\rho' = (\rho_1, \cdots , \rho_{n'})$, $\xi'' = (\xi_{m+1}, \cdots , \xi_{m'})$ and $\rho'' = (\rho_{n+1}, \cdots , \rho_{n'}).$
 \item{(7)$'$} Let
 $$
f = \sum_{\mu,\nu}\overline{f}_{\mu\nu}(\xi,\rho)(\xi'')^\mu
(\rho'')^\nu \in B_{m',n'}
$$
where the $\overline{f}_{\mu\nu}(\xi,\rho) \in B_{m,n}$. There is a
 finite set $J \subset \bN^{m'-m+n'-n}$
and,  for each $(\mu,\nu) \in J$ a function $g_{\mu\nu} \in
(B_{m',n'})^\circ$ such that
$$
f = \sum_{(\mu,\nu) \in J} \overline f_{\mu\nu}(\xi,\rho)
(\xi'')^\mu(\rho'')^\nu (1+g_{\mu\nu})
$$
as an element of $B_{m',n'}$.

\item{(vii)$'$} Let
$$
f = \sum_{\mu,\nu}\overline{f}_{\mu\nu}(\xi,\rho)(\xi'')^\mu
(\rho'')^\nu \in A_{m',n'}^H
$$
where the $\overline{f}_{\mu\nu}(\xi,\rho) \in A_{m,n}^H$. There is a
finite set $J \subset \bN^{m'-m+n'-n}$ and
and,  for each $(\mu,\nu) \in J$ a function $g_{\mu\nu} \in
(A^H_{m',n'})^\circ$ such that
$$
f = \sum_{(\mu,\nu) \in J} \overline f_{\mu\nu}(\xi,\rho)
(\xi'')^\mu(\rho'')^\nu (1+g_{\mu\nu})
$$
as an element of $A^H_{m',n'}$.

\end{defn}

The following is immediate.
\begin{lem}
\item{(i)} If the separated pre-Weierstrass system $\cB$ satisfies condition (7)$'$ above then $\cB$ is a separated Weierstrass system.
\item{(ii)} If the good, strictly convergent pre-Weierstrass system $\cA$ satisfies condition (vii)$'$ above then $\cA$ is a strictly convergent Weierstrass system.
\end{lem}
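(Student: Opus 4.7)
The plan is to deduce (7) of Definition \ref{con7} for all rings of $\cB$-fractions (resp.\ (vii) of Lemma \ref{SNPstr} for all rings of $\cA^H$-fractions) from the statement (7)$'$ (resp.\ (vii)$'$), which is asserted only for the bare system itself. Both parts run in parallel, so I will concentrate on (i) and note that (ii) follows identically with $\cA^H$ in place of $\cB$, invoking Theorem \ref{MainThm} to ensure $\cA^H$ is already a separated pre-Weierstrass system.

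The first step is the base case $C = B$, where $C_{m,n} = B_{m,n}$. I would apply (7)$'$ with the inner indices $(m,n)$ of that condition taken equal to $(0,0)$, so that $\xi''=(\xi_1,\ldots,\xi_m)$, $\rho''=(\rho_1,\ldots,\rho_n)$, and the expansion coefficients $\overline f_{\mu\nu}$ lie in $B_{0,0}=B$. These coincide with the Taylor coefficients of $f$ in the sense of Definition \ref{quotes}, since both are read off by truncating modulo suitable powers of the variables. So (7) for $C=B$ drops out immediately.

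The second step handles arbitrary $C$ by induction on the number of operations (ii)--(iv) of Definition \ref{frac} used to build $C$ from $B$. Given $C'$ obtained from $C$ in one such step, I would lift $f\in C'_{m,n}$ to some $\widetilde f$ in $C_{m,n}$, $C_{m+1,n}$, or $C_{m,n+1}$ according to the type of step, apply the inductive hypothesis to $\widetilde f$, and reduce modulo the defining relation of $C'$ over $C$. Finiteness of the index set $J$ is preserved; each monomial $\xi^\mu\rho^\nu$ descends to a monomial in the variables of $C'_{m,n}$ times a power of the newly adjoined fraction $c/d$, and each $g_{\mu\nu}$ descends to an element of $(C'_{m,n})^\circ$ (the latter is where the definition of $(C'_{m,n})^\circ$ from Definition \ref{quotes} is used).

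The only substantive point to verify, and what I expect to be the main bookkeeping obstacle, is that after the substitution and regrouping the resulting coefficients really are the Taylor coefficients of $f$ in $C'_{m,n}$. I expect this to be immediate from the characterization of Taylor coefficients by Weierstrass division modulo $(\xi^{\mu'},\rho^{\nu'})$ in Definition \ref{quotes}: this extraction commutes with the substitutions $\xi_1\mapsto c/d$ or $\rho_1\mapsto c/d$ and with reduction modulo $d$, because none of these operations touch the variables $(\xi,\rho)$ of $C'_{m,n}$ in which the Taylor expansion is computed. Once this compatibility is recorded, the inductive step is complete and the lemma follows.
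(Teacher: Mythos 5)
There is a genuine gap in the inductive step. The paper itself gives no argument (it just says ``The following is immediate''), and the intended argument is the direct one that you should have used in place of the induction. Your base case is fine: applying (7)$'$ with inner indices $(0,0)$ reproduces exactly (7) for $C=B$. The trouble comes in the step for a ring of fractions of type (iii) or (iv). There you lift $f\in C'_{m,n}$ to $\widetilde f\in C_{m+1,n}$ (say), apply the inductive hypothesis, i.e.~(7), to $\widetilde f$, and obtain a representation $\widetilde f=\sum_{(\mu,\nu)\in J}\overline{\widetilde c}_{\mu\nu}\xi^\mu\rho^\nu(1+g_{\mu\nu})$ with $\mu=(\mu_1,\mu')\in\bN^{m+1}$. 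On reducing modulo $d\xi_1-c$, the variable $\xi_1$ collapses to the element $\theta=c/d\in C'$, so every monomial $\xi_1^{\mu_1}(\xi'')^{\mu'}\rho^\nu$ becomes $\theta^{\mu_1}(\xi'')^{\mu'}\rho^\nu$. Distinct indices $(\mu_1,\mu',\nu)$ with the same $(\mu',\nu)$ now contribute to the same monomial of $C'_{m,n}$, and their sum
$\sum_{\mu_1}\overline{\widetilde c}_{(\mu_1,\mu'),\nu}\,\theta^{\mu_1}(1+\overline g_{(\mu_1,\mu'),\nu})$
is a finite sum that is \emph{not} visibly of the shape $\overline c_{\mu'\nu}(1+g')$ demanded by (7), where $\overline c_{\mu'\nu}$ is the actual Taylor coefficient of $f$ in $C'_{m,n}$ (which, note, is the value at $\xi_1=\theta$ of an \emph{infinite} sum $\sum_{\ell\ge 0}\overline{\widetilde c}_{(\ell,\mu'),\nu}\theta^{\ell}$). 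Your remark that extraction of Taylor coefficients commutes with the substitutions does not address this regrouping problem, because you applied the inductive hypothesis to all $m+1+n$ variables rather than isolating the internal variable $\xi_1$; once you collapse $\xi_1$, terms merge and the required factorization by the Taylor coefficient has to be re-established, which is exactly the content of (7) and is not automatic.

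The fix is not to induct at all. Every ring of $\cB$-fractions $C$ can be realized as $C_{m,n}=B_{m+p,\,n+q}/\mathfrak{a}$ for a single $(p,q)$ and an ideal $\mathfrak{a}$ (each step (ii)--(iv) increases $(p,q)$ by at most one in one coordinate and adds a generator to $\mathfrak{a}$). Given $f\in C_{m,n}$, lift it once to $F\in B_{m+p,\,n+q}$, and apply (7)$'$ with the split in which $\xi=(\xi_1,\dots,\xi_p)$, $\rho=(\rho_1,\dots,\rho_q)$ play the role of the inner variables and $\xi''$, $\rho''$ are the remaining $m+n$ variables. This gives $F=\sum_{(\mu,\nu)\in J}\overline F_{\mu\nu}(\xi,\rho)(\xi'')^\mu(\rho'')^\nu(1+g_{\mu\nu})$ with $\overline F_{\mu\nu}\in B_{p,q}$ and $g_{\mu\nu}\in (B_{m+p,n+q})^\circ$. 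Reducing modulo $\mathfrak{a}$ sends $\overline F_{\mu\nu}$ to elements of $C$ which, by uniqueness of Weierstrass division and its compatibility with the quotient (exactly the observation you made in your last paragraph, now applied in one shot rather than iteratively), are precisely the Taylor coefficients $\overline c_{\mu\nu}$ of $f$ from Definition \ref{quotes}; and $g_{\mu\nu}$ maps into $C_{m,n}^\circ$. This yields (7) for $C$ with no regrouping, and the same argument gives (ii) with $\cA^H$ in place of $\cB$ (using Theorem \ref{MainThm} for the pre-Weierstrass structure of $\cA^H$). So your guiding compatibility observation was the right one; the flaw is in organizing the argument as an induction that appeals to the unsplit form (7) instead of using the flexible split built into (7)$'$ directly.
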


A weaker Strong Noetherian Property that that is still sufficient for results on quantifier elimination, $b$--minimality and cell decomposition is (7)* below.  See \cite{CL1} section 4.2 for definitions. This definition is Theorem 4.2.15 of \cite{CL1}.
\begin{defn}
\item{(7)*}
 Let $\cB = \{B_{m,n}\}$ be a pre-Weierstrass system, and let $m \leq m', n \leq n'$ and let $\xi = (\xi_1, \cdots , \xi_m)$, $\rho = (\rho_1, \cdots , \rho_n)$, $\xi' = (\xi_1, \cdots , \xi_{m'})$, $\rho' = (\rho_1, \cdots , \rho_{n'})$, $\xi'' = (\xi_{m+1}, \cdots , \xi_{m'})$ and $\rho'' = (\rho_{n+1}, \cdots , \rho_{n'}),$ and let
$$
f = \sum_{\mu,\nu}\overline{f}_{\mu\nu}(\xi,\rho)(\xi'')^\mu
(\rho'')^\nu \in B_{m',n'}
$$
where the $\overline{f}_{\mu\nu}(\xi,\rho) \in B_{m,n}$. (The $\overline{f}_{\mu\nu}(\xi,\rho)$ are well defined by Weierstrass division in  $B_{m',n'}$). There is a
finite system $\cF$ of rings of $B$-fractions, and for each $B' \in
\cF$ there is a finite, disjointly covering family of Laurent rings
$C$ over $A'_{m,n}$, and, for each $C$, there are: a finite set $J_C$ and
$C$-units $u_{C\mu\nu} \in C$  and functions $h_{C\mu\nu} \in
C_{m'-m,n'-n}^\circ$ for $(\mu,\nu) \in J_C$ such that
$$
f = \sum_{(\mu,\nu) \in J_C} \overline f_{\mu\nu}(\xi,\rho)
(\xi'')^\mu(\rho'')^\nu u_{C\mu\nu}(1+h_{C\mu\nu})
$$
as an element of $C_{m'-m,n'-n}$.

\item{(vii)*} The corresponding condition for a strictly convergent pre-Weierstrass system $\cA$ is that the separated pre-Weierstrass system $\cA^H$ satisfy condition (7)*.

\end{defn}

\begin{defn}\label{easier}The arguments in  \cite{CL1} section 4.2 (Lemma 4.2.14 and Theorem 4.2.15) show that the following less cumbersome condition implies condition (7)*:
\item[(7)$''$] For any
$$
f = \sum_{\mu,\nu}\overline{c}_{\mu\nu}(\xi)^\mu
(\rho)^\nu \in B_{m,n}
$$
 there is a
system $\cF$ of rings of $\cB$-fractions, and for each $B' \in \cF$
 there is a finite set $J_{B'}$,
and functions $g_{\mu\nu} \in
(B'_{m,n})^\circ$ for $(\mu,\nu) \in J_{B'}$ such that
$$
f = \sum_{(\mu,\nu) \in J_{B'}} \overline c_{\mu\nu}\xi^\mu\rho^\nu
(1+g_{\mu\nu})
$$
as an element of $B'_{m,n}$.
\end{defn}

We do not know an example of a good pre-Weierstrass system $\cA$ such that $\cA^H := \{A_{m,n}^H\}$ is a separated Weierstrass System but does not satisfy condition(vii)$'$ of Definition \ref{SNPstr'}.  Indeed, we do not have examples that distinguish among the various strong noetherian properties below.

\subsection{Extension of parameters}\label{ExtPar}

Let $\cB$ be a separated pre-Weierstrass system and let $K$ be a field with analytic $\cB$--structure via $\sigma$. As in \cite{CL1} Definition 4.5.6(i), assume that $\sigma_0 : A \to \Ko$ is an embedding, and define
$$
B_{m,n}(K) := \{f^\sigma(\xi,\rho, c, d) : c \in(\Ko)^{M}, d \in (\Koo)^{N} \text{ and } f \in B_{m+M,n+N} \}.
$$
Then $B_{m,n}(K)$ is a ring of functions $(\Ko)^m \times (\Koo)^n \to K$.  If $\cB$ satisfies condition (7)* above (for example if $\cB$ is a Weierstrass system) it follows that the homomorphism $f^\sigma \mapsto \cS(f^\sigma)$ is an isomorphism, so we may regard $B_{m,n}(K)$ as a ring of power series over $\Ko$.  Then $\cB(K) := \{B_{m,n}(K)\}$ is a Weierstrass system by \cite{CL1} Theorem 4.5.7(i).

In Definition 4.5.6(ii) of \cite{CL1}, a similar extension of parameters is given for some strictly convergent analytic pre-Weierstrass systems, i.e. those for which both $I$ and $\Koo$ have a prime element. As a further example, complementing the ones in \ref{ex}, we have the following extension of
\cite{CL1} Theorem 4.5.7(ii), where we no longer impose that condition on $I$ or $K$.
\begin{lem}
Let $K$ be a henselian field with analytic $\cA$--structure for strictly convergent Weierstrass system $\cA$.
Then also the system $\cA(K)$ obtained by extension of parameters
is a strictly convergent Weierstrass System.
\end{lem}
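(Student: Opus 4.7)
The plan is to mirror the strategy of \cite{CL1} Theorem 4.5.7(i), which establishes extension of parameters for \emph{separated} Weierstrass systems, and combine it with the natural identification $\cA(K)^H = \cA^H(K)$. By Definition \ref{SNPstr0}, I need to check two things about $\cA(K)$: that it is a good pre-Weierstrass system over $(\Ko, \Koo)$, and that $\cA(K)^H$ is a separated Weierstrass system over $(\Ko, \Koo)$.

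First I would verify that $\cA(K) = \{A_m(K)\}_m$ is a pre-Weierstrass system. Conditions (i)--(v) of Definitions \ref{PWSs} and \ref{PWS} are immediate from the corresponding conditions on $\cA$ after specialization at a parameter tuple; the identity $\widetilde{A_m(K)} = \widetilde{K}[\xi]$ needed in (iv) follows by reducing modulo $\Koo$ and using $(A_{m+P})\,\widetilde{\ } = \widetilde{A}[\xi,c]$. For Weierstrass preparation (vi), given $f = g^\sigma(\xi,c) \in A_m(K)$ regular in $\xi_m$ of degree $d$, I would lift $f$ to a series $g \in A_{m+P}$ that is itself regular in $\xi_m$ (using conditions (iv) and (viiWNP) of $\cA$ to absorb any obstruction into a unit), apply Weierstrass preparation in $\cA$, and specialize the resulting Weierstrass data at $c$. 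Condition (viiWNP) transfers from the lift $g$ back to $f$, and (viii) is clear because $\sigma$ itself is an injective analytic $\cA(K)$-structure on $K$.

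The core step is the identification $\cA(K)^H = \cA^H(K)$. In one direction, any element $F \circ h_\Sigma \in A_{m,n}^H$ (with $F \in A_{m+M,n+N}$ and $\Sigma$ a henselian system over $\cA$) specializes at $\Ko$-parameters to an element of $A_{m,n}(K)^H$, since $F^\sigma$ lies in $A_{m,n}(K)$ after specialization and the specialized $\Sigma^\sigma$ is henselian over $\cA(K)$. For the reverse direction, any henselian system $\Sigma$ over $\cA(K)$ lifts to a henselian system $\widetilde{\Sigma}$ over $\cA$ by promoting the parameters occurring in its coefficients to new $\xi$-variables; here the key technical point is that a coefficient in $A_{p+q}(K)$ may not be a polynomial in $(\xi,\rho)$ over $\Ko$, but after lifting the parameters the decomposition $A_{p+q} = A[\xi,\rho] + I \cdot A_{p+q}$ supplied by (iv) and (viiWNP) of $\cA$ gives precisely the polynomial-plus-small split required by Definition \ref{hf2}. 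Once the identification is in place, \cite{CL1} Theorem 4.5.7(i) applied to the separated Weierstrass system $\cA^H$ (which is separated Weierstrass by our hypothesis and Definition \ref{SNPstr0}) gives that $\cA^H(K) = \cA(K)^H$ is a separated Weierstrass system. Goodness of $\cA(K)$ then follows by the same lifting trick: an equation $F \circ h_\Sigma = 0$ over $\cA(K)$ lifts to $\widetilde{F} \circ h_{\widetilde{\Sigma}} = 0$ (the vanishing is preserved because composition is well-defined in the separated Weierstrass system $\cA^H(K)$ via its Strong Noetherian Property), and goodness of $\cA$ then yields $\widetilde{F} \in I_{\widetilde{\Sigma}} \cdot (A_{p,q})_{1+(I,\rho,\lambda)}$, which specializes to the required membership for $F$ in $\cA(K)$.

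The main obstacle will be carrying out the lift in the identification $\cA(K)^H = \cA^H(K)$ with due care: organizing the promotion of parameters so that the polynomial form of $b_{0,i}, c_{0,j}$ in Definition \ref{hf2} is preserved, and verifying that the power series solution $h_{\widetilde{\Sigma}}$ genuinely specializes to $h_\Sigma$. This is essentially the same bookkeeping that underlies \cite{CL1} Theorem 4.5.7(i) for the separated case, and once it is in hand, the strictly convergent conclusion follows by a single application of that theorem to $\cA^H$.
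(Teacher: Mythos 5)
Your overall route is the same as the paper's: the whole content of the paper's one-line proof is the identification $\cA(K)^H = \cA^H(K)$ together with the observation that, since $\Ko$ is a valuation ring, all rings of $\cA^H(K)$-fractions collapse to $\Ko$ itself (so the strong Noetherian condition (7) of Definition~\ref{con7} has no auxiliary rings to check), after which \cite{CL1} Theorem 4.5.7(i) applied to the separated Weierstrass system $\cA^H$ does the work. You identify the same key identification and invoke the same theorem, so the high-level strategy matches, and your discussion of the lift-of-parameters technicalities in Definition~\ref{hf2} is in the right spirit. One remark on emphasis: the ``no rings of fractions needed because $\Ko$ is a valuation ring'' observation is explicitly flagged in the paper's proof and is doing real work (it is what lets the cumbersome quantification over rings of $\cB$-fractions in condition~(7) disappear); your write-up does not surface this point.

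There is, however, a concrete gap in your argument for goodness of $\cA(K)$. You claim that if $F\circ h_\Sigma = 0$ over $\cA(K)$ then the lift satisfies $\widetilde F\circ h_{\widetilde\Sigma}=0$ over $\cA$, so that goodness of $\cA$ applies. That implication is false in general: the vanishing $F\circ h_\Sigma = 0$ can depend on $\Ko$-relations among the parameters that are invisible in $A$. For instance, take $A=\bZ[[t]]$, $I=(t)$, $K=\bQ_p$, $\sigma(t)=p$, and let $c_1,c_2\in\bZ_p\setminus\sigma(A)$ with $c_1c_2=1$. With $F=\eta_1\eta_2-1$ and $\Sigma$ the (trivially henselian) system $\eta_1=c_1,\ \eta_2=c_2$, one has $F\circ h_\Sigma = c_1c_2-1 = 0$ in $\cA(K)$, and indeed $F=\eta_1(\eta_2-c_2)+c_2(\eta_1-c_1)\in I_\Sigma$, so goodness holds; but the lift promoting $c_i$ to new variables $\zeta_i$ gives $\widetilde F\circ h_{\widetilde\Sigma}=\zeta_1\zeta_2-1\neq 0$, so goodness of $\cA$ tells you nothing. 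Your parenthetical justification (that vanishing is ``preserved because composition is well-defined in $\cA^H(K)$'') addresses the wrong direction: the problem is that the vanishing does not lift, not that it fails to specialize. To make the goodness step rigorous you would instead want to work inside the separated Weierstrass system $\cA^H(K)$ directly (e.g.\ via the faithful-flatness descent of Proposition~\ref{complete}, choosing an appropriate $\cB$), rather than lifting the identity $F\circ h_\Sigma=0$ back to $\cA$.
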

\begin{proof}
Because $\Ko$ is a valuation ring no rings of fractions are needed and we have that $\cA(K)^H = \cA^H(K)$.
\end{proof}

We will give another strong noetherian property ((7)$'''$ below) that has the advantage that it is expressed in terms of the fields with analytic $\cB$--structure, rather than the somewhat awkward concept of systems of rings of fractions.  It is useful in defining the concept of Weierstrass system with side conditions in subsection \ref{side} below.  Let $\cB$ be a pre-Weierstrass system.
For any (henselian) field $K$ with analytic $\cB$--structure, let
$$
K_{\cB}^\circ := \{\tau : \tau \text{  a variable-free term of  } \cL_\cB^D \}
$$
and let $K_\cB$ be the field of fractions of $K_\cB^\circ$.

\begin{defn}
Let $\cB$ be a pre-Weierstrass system and let  $\cB(K_\cB)$ be obtained by extending parameters.  \item[(7)$'''$]
If $f^\sigma  \in B_{m,n}(K_\cB)$ and
$\cS(f^\sigma) = \sum_{\mu,\nu}\overline{c}_{\mu\nu}\xi^\mu\rho^\nu$ with the $\overline{c}_{\mu\nu} \in \Ko_\cB$, then there is a finite set
$J \subset \bN^{m+n}$ and for each $(\mu,\nu) \in J$ there is a
 $g^\sigma_{\mu\nu} \in (B_{m,n}(K_\cB))^\circ$ such that
\begin{eqnarray}\label{SNPeqn}
f^\sigma = \sum_{(\mu,\nu) \in J} \overline{c}_{\mu\nu}\xi^\mu\rho^\nu(1+g^\sigma_{\mu\nu}).
\end{eqnarray}
\end{defn}

\begin{rem}
\item{a)} When $B = \Ko$ for $K$ a (henselian) field all the rings of fractions are in fact just $\Ko$ so in that case the definitions become much simpler.
\item{b)} Condition (7)$'''$ gives us for each $f$ a (possibly different) strong noetherian property (equation \ref{SNPeqn}) in each field $K$ with analytic $\cB$--structure.  A standard compactness argument shows that there is a finite set of representations of the form \ref{SNPeqn} such that in every field $K$ with analytic $\cB$--structure (or in the case, below, of Weierstrass systems with side conditions, in every such $K$ satisfying additional axioms $\cT$) one of the finite set of equations (\ref{SNPeqn}) holds in $K$.
\end{rem}

\subsection{Weierstrass systems with side conditions}\label{side}
The following small extension of the above concepts may be useful.  See the discussion preceeding Theorem \ref{ex3}.

\begin{defn}A (strictly convergent, or separated) {\it pre-Weierstrass system with side conditions} is a (good strictly convergent, or separated) pre-Weierstrass system $\cA$, together with some axioms $\cT_\cA$ in the language $\cL_\cA$.  \label{side1}
\end{defn}

\begin{defn}A {\it separated Weierstrass system with side conditions} is a separated pre-Weierstrass system $\cB$, together with axioms  $\cT_\cB$ in the language $\cL_\cA$, such that, for each henselian field $K$ with analytic $\cB$--structure that satisfies the axioms of $\cT_\cB$, the pre-Weierstrass system $\cB(K_\cB)$ obtained by extension of parameters (cf. Subsection \ref{ExtPar} above) satisfies the condition
\item[(7)$''$] If $f = \sum_{\mu,\nu}\overline{c}_{\mu\nu}\xi^\mu\rho^\nu \in B_{m,n}(K_\cB)$
 with the $\overline{c}_{\mu\nu} \in \Ko_\cB$, then there is a finite set
$J \subset \bN^{m+n}$ and for each $(\mu,\nu) \in J$ there is a
 $g_{\mu\nu} \in (B_{m,n}(K_\cB))^\circ$ such that
$$
f = \sum_{(\mu,\nu) \in J} \overline{c}_{\mu\nu}\xi^\mu\rho^\nu(1+g_{\mu\nu}).
$$
In this situation we refer to the analytic structure on $K$ as an {\it analytic structure with side conditions.}  \label{side2}
We call a good strictly convergent pre-Weierstrass system $\cA$ a {\it strictly convergent Weierstrass system with side conditions} if the seperated pre-Weierstrass system $\cA^H$ satisfies the above condition for evey henselian field $K$ with analytic $\cA$--structure that satisfies the side conditions.
\end{defn}

The following is immediate.

\begin{cor}Let $\cA$ be a strictly convergent Weierstrass system with side conditions $\cT_\cA$.  There is a set of existential definitions in the the langage $\cL_\cA^D$ which, in every henselian field $K$ with analytic $\cA$--structure with side conditions $\cT_\cA$, define a separated analytic structure on $K$ extending the  analytic $\cA$--structure.  Indeed, the functions are given by terms of $\cL_\cA^{D,h}$.
\end{cor}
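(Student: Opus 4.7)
The plan is to assemble the corollary directly from the machinery that has been built up for the (unconditional) strictly convergent Weierstrass system case, observing that the side conditions enter only at the single step where property (vii) (or its variant (7)$''$) is needed. Since $\cA$ is by assumption a \emph{good} strictly convergent pre-Weierstrass system, the construction of $\cA^H$ and all the existential-definability results from Section 3 are available verbatim; the side conditions serve only to certify that $\cA^H$ is truly a separated Weierstrass system (not merely a pre-Weierstrass one) on the models of $\cT_\cA$.

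First I would fix a henselian valued field $K$ with analytic $\cA$--structure $\sigma$ satisfying $\cT_\cA$. Since $\cA$ is good, Lemma \ref{recap0} produces a unique extension $\sigma^H$ of $\sigma$ to a separated $\cA^H$--structure on $K$. The definition of ``strictly convergent Weierstrass system with side conditions'' (Definition \ref{side2}) is tailored exactly so that the extended system $\cA^H(K_\cA)$ obtained by adjoining parameters satisfies the strong noetherian condition (7)$''$ (equivalently, property (7) of Definition \ref{con7} in its parameter-extended form). Hence $\sigma^H$ actually witnesses a separated \emph{Weierstrass} analytic structure on $K$, and this structure extends $\sigma$ by construction.

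Next I would produce the uniform existential definitions. For each $f\in \bigcup_{m,n} A_{m,n}^H$, Corollary \ref{sim} provides an existential $\cL_\cA$-formula whose interpretation in any henselian field with analytic $\cA$--structure is the graph of $f^{\sigma^H}$, and crucially this formula is independent of the particular $\sigma$. Collecting one such formula for each $f$ (and passing from $\cL_\cA$ to $\cL_\cA^D$ via the routine translation in Remark \ref{AQE}) yields the desired uniform set of existential definitions. The refinement to terms is immediate from Lemma \ref{hf3}(ii): every term of $\cL_{\cA^H}^D$ is, under every analytic $\cA$--structure, interpreted by a term of $\cL_\cA^{D,h}$ built from the basic henselian symbols $h_n$ and the restricted division $D$.

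The only potentially subtle point in assembling the proof is making sure the uniformity claim is genuinely uniform across all models satisfying $\cT_\cA$: the compactness/characteristic-function trick used in the proofs of Lemma \ref{hf3} and Theorem \ref{thens} already provides this, because Corollary \ref{sim} guarantees a formula independent of $\sigma$, and Lemma \ref{hf3} provides a single term of $\cL_\cA^{D,h}$ that agrees with the $\cL_{\cA^H}^D$-term under every analytic $\cA$--structure. Consequently neither the defining existential formulas nor the term representations depend on the choice of $K\models\cT_\cA$, which is exactly what the corollary asserts.
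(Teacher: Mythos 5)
Your proposal is correct and takes essentially the same approach as the paper, which simply states "The following is immediate" and leaves the assembly to the reader; you have correctly identified the three ingredients that make it immediate (Lemma~\ref{recap0} for the existence and uniqueness of the extension $\sigma^H$ from goodness, Corollary~\ref{sim} for the $\sigma$-independent existential $\cL_\cA$-definability of the graphs, and Lemma~\ref{hf3}(ii) for the passage to $\cL_\cA^{D,h}$-terms), and you have correctly located where the side conditions $\cT_\cA$ enter, namely only in certifying condition (7)$''$ so that $\sigma^H$ is a genuine separated Weierstrass structure rather than merely a pre-Weierstrass one.
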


\subsection*{Acknowledgments}
\hspace{0.5cm}
The authors would like to thank Silvain Rideau for interesting discussions about the amendments to \cite{CL1} in Appendix A.1. We would also like to thank F. Martin for discussions that led to Proposition \ref{termsalg} and Theorem \ref{terms}.
The authors were supported in part by the European Research Council under the European Community's Seventh Framework Programme (FP7/2007-2013)
with ERC Grant Agreements nrs. 246903 NMNAG and 615722 MOTMELSUM,
by the Labex CEMPI  (ANR-11-LABX-0007-01), and by the the Fund for Scientific Research of Flanders, Belgium (grant G.0415.10). The authors would also like to thank the IHES and the FIM of the ETH in Z\"urich, where part of the research was done.

\end{document}